\newcommand{\area}{\mathsf{area}}
\newcommand{\dinv}{\mathsf{dinv}}
\newcommand{\inv}{\mathsf{inv}}
\newcommand{\maj}{\mathsf{maj}}
\newcommand{\ides}{\mathsf{ides}}
\newcommand{\Rise}{\mathsf{Rise}}
\newcommand{\DRise}{\mathsf{DRise}}
\newcommand{\D}{\mathsf{D}} 
\newcommand{\PLD}{\mathsf{PLD}} 
\newcommand{\gendelta}{\mathsf{PLD}_{\underline x,q,t}(m,n)^{\ast k}} 
\newcommand{\OP}{\mathsf{OP}} 
\newcommand{\OPi}{\mathsf{OPi} } 
\newcommand{\OPd}{\mathsf{OPd} } 
\newcommand{\SOP}{\mathsf{SOP}} 
\newcommand{\qbinom}[2]{\genfrac{[}{]}{0pt}{}{#1}{#2}}
\newdimen\qrr@tikz@sharp@z@
	\edef\pgf@marshal{\noexpand\pgfutil@in@{and}{\pgfgetarrowoptions{sharp >}}}%
	\edef\pgf@tempa{\pgfgetarrowoptions{sharp >}}
	\qrr@tikz@sharp@parse\pgfgetarrowoptions{sharp >}and-\pgfgetarrowoptions{sharp >}\@qrr@tikz@sharp@parse
	\let\qrr@tikz@sharp@max\pgfmathresult
	\pgfmathsetlength\pgf@xa{.5*\pgflinewidth * tan(\qrr@tikz@sharp@max)}%
	\edef\pgf@marshal{\noexpand\pgfutil@in@{and}{\pgfgetarrowoptions{sharp >}}}%
	\edef\pgf@tempa{\pgfgetarrowoptions{sharp >}}
	\qrr@tikz@sharp@parse\pgfgetarrowoptions{sharp >}and-\pgfgetarrowoptions{sharp >}\@qrr@tikz@sharp@parse
	\pgfmathsetlength\pgf@ya{.5*\pgflinewidth * tan(max(\pgf@tempa,\pgf@tempb,0))}%
	\pgfmathsetlength\pgf@xa{-.5*\pgflinewidth * tan(\pgf@tempa)}%
	\pgfmathsetlength\pgf@xb{-.5*\pgflinewidth * tan(\pgf@tempb)}%
	\edef\pgf@marshal{\noexpand\pgfutil@in@{and}{\pgfgetarrowoptions{sharp <}}}%
	\edef\pgf@tempa{\pgfgetarrowoptions{sharp <}}
\qrr@tikz@sharp@parse\pgfgetarrowoptions{sharp <}and-\pgfgetarrowoptions{sharp <}\@qrr@tikz@sharp@parse
	\let\qrr@tikz@sharp@max\pgfmathresult
	\pgfmathsetlength\pgf@xa{.5*\pgflinewidth * tan(\qrr@tikz@sharp@max)}%
	\edef\pgf@marshal{\noexpand\pgfutil@in@{and}{\pgfgetarrowoptions{sharp <}}}%
	\edef\pgf@tempa{\pgfgetarrowoptions{sharp <}}
\qrr@tikz@sharp@parse\pgfgetarrowoptions{sharp <}and-\pgfgetarrowoptions{sharp <}\@qrr@tikz@sharp@parse
	\pgfmathsetlength\pgf@ya{.5*\pgflinewidth * tan(max(\pgf@tempa,\pgf@tempb,0))}%
	\pgfmathsetlength\pgf@xa{-.5*\pgflinewidth * tan(\pgf@tempa)}%
	\pgfmathsetlength\pgf@xb{-.5*\pgflinewidth * tan(\pgf@tempb)}%
\def\qrr@tikz@sharp@parse#1and#2\@qrr@tikz@sharp@parse{\def\pgf@tempa{#1}\def\pgf@tempb{#2}}
\newcommand\multiset[2]%
\let\existstemp\exists \renewcommand*{\exists}{\mathop \existstemp}
\let\foralltemp\forall \renewcommand*{\forall}{\mathop \foralltemp}
\def\quotient#1#2{\raise1ex\hbox{$#1$}\Big/\lower1ex\hbox{$#2$}}
\newcommand{\<}{\langle}
\renewcommand{\>}{\rangle}
\newtheorem{theorem}{Theorem}[section]
\newtheorem{lemma}[theorem]{Lemma}
\newtheorem{proposition}[theorem]{Proposition}
\newtheorem{corollary}[theorem]{Corollary}
\newtheorem{conjecture}[theorem]{Conjecture}
\theoremstyle{definition}
\newtheorem{definition}[theorem]{Definition}
\newtheorem{example}[theorem]{Example}
\theoremstyle{remark}
\newtheorem{remark}[theorem]{Remark}
\title{The generalized Delta conjecture at $t=0$}
\author{Michele D'Adderio}
\address{Universit\'e Libre de Bruxelles (ULB)\\D\'epartement de Math\'ematique\\ Boulevard du Triomphe, B-1050 Bruxelles\\ Belgium}\email{mdadderi@ulb.ac.be}
\author{Alessandro Iraci}
\address{Universit\'a di Pisa and Universit\'e Libre de Bruxelles (ULB)\\Dipartimento di Matematica\\ Largo Bruno Pontecorvo 5, 56127 Pisa\\ Italia}\email{iraci@student.dm.unipi.it}
\author{Anna Vanden Wyngaerd}
\address{Universit\'e Libre de Bruxelles (ULB)\\D\'epartement de Math\'ematique\\ Boulevard du Triomphe, B-1050 Bruxelles\\ Belgium}\email{anvdwyng@ulb.ac.be}
\begin{document}
	
\begin{abstract}
We prove the cases $q=0$ and $t=0$ of the \emph{generalized Delta conjecture} of Haglund, Remmel and Wilson \cite{Haglund-Remmel-Wilson-2015} involving the symmetric function $\Delta_{h_m}\Delta_{e_{n-k-1}}'e_n$. Our theorem generalizes recent results by Garsia, Haglund, Remmel and Yoo \cite{Garsia-Haglund-Remmel-Yoo-2017}. This proves also the case $q=0$ of our recent \emph{generalized Delta square conjecture} \cite{DAdderio-Iraci-VandenWyngaerd-Delta-Square}.
\end{abstract}
	
\maketitle
\tableofcontents

\section{Introduction}
In \cite{Haglund-Remmel-Wilson-2015} Haglund Remmel and Wilson stated the so called \emph{Delta conjecture}, which can be written as
\[\Delta_{e_{n-k-1}}'e_n=\sum_{P\in \PLD(0,n)}q^{\dinv(P)}t^{\area(P)}x^P, \]
where on the left hand side we have one of the Delta operators introduced in \cite{Bergeron-Garsia-Haiman-Tesler-Positivity-1999} applied to the symmetric function $e_n$, and on the right hand side we have a combinatorial formula given in terms of labelled Dyck paths (see Sections~\ref{sec: combinat} and \ref{sec: symfun} for precise definitions). This formula generalizes the so called \emph{Shuffle conjecture} in \cite{HHLRU-2005} (which is the case $k=0$), recently proved in \cite{Carlsson-Mellit-ShuffleConj-2015} by Carlsson and Mellit.

The Delta conjecture already attracted quite a bit of interest, and several special cases have been proved (e.g. see \cite{DAdderio-Iraci-VandenWyngaerd-GenDeltaSchroeder} and references therein). In particular, the special cases $q=0$ and $t=0$ have been recently proved in \cite{Garsia-Haglund-Remmel-Yoo-2017} by Garsia, Haglund, Remmel and Yoo. To this day, the full conjecture remains widely open.

In the same \cite{Haglund-Remmel-Wilson-2015}, the authors formulated a more general conjecture, that we call \emph{generalized Delta conjecture}, and that can be stated as
\[\Delta_{h_m}\Delta_{e_{n-k-1}}'e_n=\sum_{P\in \PLD(m,n)}q^{\dinv{P}}t^{\area(P)}x^P, \]
where now on the left hand side we act with another Delta operator, while on the right hand side we sum over \emph{partially labelled Dyck paths} (again, see Sections~\ref{sec: combinat} and \ref{sec: symfun} for precise definitions). The Delta conjecture is simply the case $m=0$ of this one. 

In \cite{DAdderio-Iraci-VandenWyngaerd-GenDeltaSchroeder} we proved the so called \emph{Schr\"{o}der case}, i.e. the case $\<\cdot , e_{n-d}h_d\>$, of the generalized Delta conjecture.

The main result of this paper is to prove the special cases $q=0$ and $t=0$ of the generalized Delta conjecture:
\begin{theorem}
	For $m,n,k\in \mathbb{N}$, $m\geq 0$ and $n>k\geq 0$, we have both
	\[
	\left.\Delta_{h_{m}} \Delta'_{e_{n-k-1}} e_{n}\right|_{t=0} = \mathop{\sum_{P\in \PLD(m,n)}}_{\area(P)=0}q^{\dinv(P)} x^P
	\]
	and
	\[ \left.\Delta_{h_{m}} \Delta'_{e_{n-k-1}} e_{n}\right|_{q=0} = \mathop{\sum_{P\in \PLD(m,n)}}_{\dinv(P)=0} t^{\area(P)}x^P.\]
\end{theorem}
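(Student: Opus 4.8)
The plan is to prove the $t=0$ identity first and then obtain the $q=0$ identity from it, in both cases reducing to the corresponding case of the ordinary Delta conjecture (the case $m=0$) proved by Garsia, Haglund, Remmel and Yoo \cite{Garsia-Haglund-Remmel-Yoo-2017}.

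I would start on the symmetric function side. Expanding $e_n$ in the modified Macdonald basis $\widetilde H_\mu$ and using that $\Delta_{h_m}$ and $\Delta'_{e_{n-k-1}}$ act diagonally with eigenvalues $h_m[B_\mu]$ and $e_{n-k-1}[B_\mu-1]$, one sees that setting $t=0$ in these eigenvalues kills the contribution of every cell of $\mu$ outside its first row: concretely, $e_{n-k-1}[B_\mu-1]\big|_{t=0}$ is nonzero only for $\mu_1\ge n-k$, where it equals $q^{\binom{n-k}{2}}\qbinom{\mu_1-1}{n-k-1}_q$, and $h_m[B_\mu]\big|_{t=0}=h_m[1+q+\cdots+q^{\mu_1-1}]=\qbinom{\mu_1+m-1}{m}_q$, which depends on $\mu$ only through $\mu_1$. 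Hence $\Delta_{h_m}\Delta'_{e_{n-k-1}}e_n\big|_{t=0}$ is obtained from $\Delta'_{e_{n-k-1}}e_n\big|_{t=0}$ by multiplying, within the $\widetilde H_\mu$-expansion, each term with first part $\mu_1=a$ by the extra factor $\qbinom{a+m-1}{m}_q$. By the $t=0$ Delta theorem of \cite{Garsia-Haglund-Remmel-Yoo-2017}, $\Delta'_{e_{n-k-1}}e_n\big|_{t=0}$ equals the area-zero part of $\PLD(0,n)$; what I actually need is a refinement of that identity recording the statistic that corresponds to $\mu_1$.

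On the combinatorial side, the objects of $\PLD(m,n)$ with $\area=0$ are extremely rigid: the underlying path stays very close to the diagonal, so the positive labels break into short monotone runs delimited by the $m$ zero-labelled steps and the $k$ decorations, and both $x^P$ and $\dinv(P)$ can be read off directly. I would describe the operation of inserting the $m$ zero-labelled north steps into an area-zero object of $\PLD(0,n)$-type, show it is a bijection onto the area-zero objects of $\PLD(m,n)$, and check that it multiplies the $\dinv$-generating function of the pieces indexed by the relevant statistic $a$ by exactly $\qbinom{a+m-1}{m}_q$. Combined with the refined form of the GHRY identity, this proves the $t=0$ statement. The $q=0$ statement then follows by the transpose of the whole argument — replacing "first row" of $\mu$ by "first column", with $\Delta_{h_m}\Delta'_{e_{n-k-1}}e_n\big|_{q=0}$ reducing to the $q=0$ Delta theorem of \cite{Garsia-Haglund-Remmel-Yoo-2017}, and on the path side exchanging the roles of $\area$ and $\dinv$ — or, equivalently, from the $q,t$-symmetry of $\Delta_{h_m}\Delta'_{e_{n-k-1}}e_n$ (immediate from $\widetilde H_\mu[X;q,t]=\widetilde H_{\mu'}[X;t,q]$ and the matching symmetry of the coefficients and the Delta eigenvalues) together with a zeta-type bijection interchanging the area-zero and $\dinv$-zero objects, preserving $x^P$ and swapping the two statistics.

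The main obstacle I anticipate is the bookkeeping that identifies the effect of the extra operator $\Delta_{h_m}$ with the insertion of $m$ unlabelled north steps: one must pin down, in the presence of the $k$ decorations and of the monotonicity constraints on the positive labels, exactly how much $\dinv$ changes under this insertion and verify that it matches $\qbinom{a+m-1}{m}_q$ term by term, together with isolating inside GHRY's $t=0$ formula the grading by the statistic that plays the role of $\mu_1$. The symmetric-function manipulations themselves, once the $t\to0$ (resp.\ $q\to0$) collapse of the Delta eigenvalues onto first-row (resp.\ first-column) data is in hand, should be routine.
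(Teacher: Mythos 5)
Your route is genuinely different from the paper's: you propose to bootstrap the $m>0$ cases from the $m=0$ Delta conjecture at $t=0$ of Garsia--Haglund--Remmel--Yoo, noting that at $t=0$ the Delta eigenvalues collapse to functions of $\mu_1$ alone (which is correct), and then realizing the extra factor $\qbinom{\mu_1+m-1}{m}_q$ combinatorially as the insertion of $m$ zero-labelled north steps. The paper does the opposite: it proves the result from scratch, explicitly notes it is independent of \cite{Garsia-Haglund-Remmel-Yoo-2017}, and even remarks that its argument does \emph{not} specialize to $m=0$, so there is no $m=0$ base case being bootstrapped. Your $q=0$ deduction via $q,t$-symmetry is the same idea as the paper's, but the ``zeta-type bijection interchanging area-zero and dinv-zero objects'' you gesture at is in fact the content of the paper's Section 4: one maps both sides to ordered multiset partitions (one map $\xi$ onto $\dinv$, another map $\eta$ onto $\maj$) and then relies on the non-trivial equidistribution of $\dinv$ and $\maj$ over $\OP^R(m,n)^k$, established by tracking Wilson's Carlitz-insertion maps through the restriction to $\OP^R$ --- there is no single direct bijection between the two path families.

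The genuine gap is your ``refined form of the GHRY identity''. Writing $\Delta'_{e_{n-k-1}}e_n\big|_{t=0}=\sum_a X_a$ with $X_a$ the $\mu_1=a$ component in the $\widetilde H_\mu$-basis, you need to multiply each $X_a$ by $\qbinom{a+m-1}{m}_q$; but GHRY give you only the sum $\sum_a X_a$ as a combinatorial generating function, not the individual $X_a$. To execute your plan you must prove that $X_a$ equals a specific combinatorial sub-sum $Y_a$ of the area-zero decorated labelled Dyck paths --- a strict refinement of the GHRY theorem that records, on the combinatorial side, whatever statistic corresponds to $\mu_1$ --- and it is neither clear what that statistic is nor is such a refinement available off the shelf. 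Knowing $\sum_a X_a = \sum_a Y_a$ gives no control over $\sum_a \qbinom{a+m-1}{m}_q X_a$, so your symmetric-function computation (correct as far as it goes) does not connect to the proposed insertion bijection without this missing identification, which is plausibly as hard as the full theorem. The paper sidesteps the issue entirely: it proves $h_j^\perp$-recursions (the ``blue'' and ``red'' formulae) valid directly for $\Delta_{h_m}\Delta'_{e_{k-1}}e_n\big|_{t=0}$ with $m$ arbitrary, shows the $\inv$- and $\dinv$-enumerators over $\OP^L$ and $\OP^R$ satisfy parallel recursions, and closes by induction on $n$.
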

Notice that this result generalizes the main result in \cite{Garsia-Haglund-Remmel-Yoo-2017}.
\begin{remark}
	We would like to emphasize that our proof is independent of the results in \cite{Garsia-Haglund-Remmel-Yoo-2017}, hence providing a further new proof of the Delta conjecture at $q=0$ or $t=0$, after the alternative proofs in \cite{Haglund-Rhoades-Shimozono-arxiv} and in \cite{DAdderio-Iraci-VandenWyngaerd-Delta-Square}.
	
	It should also be noticed that our proof has the peculiar property that it does not specialize to the case $m=0$: for our argument to go through, the full generalized Delta conjecture at $t=0$ or $q=0$ is needed.
\end{remark}
Finally, in \cite{DAdderio-Iraci-VandenWyngaerd-Delta-Square} we proposed what we called a \emph{generalized Delta square conjecture}, which extends the \emph{square conjecture} of Loehr and Warrington \cite{Loehr-Warrington-square-2007}, recently proved by Sergel \cite{Leven-2016}, and it reduces to the generalized Delta conjecture when $q=0$: hence we proved also the case $q=0$ of this newer conjecture.

\medskip

The paper is organized in the following way. In Section~\ref{sec: combinat} we recall the combinatorial definitions needed for stating the generalized Delta conjecture. In Section~\ref{sec: symfun} we introduce some notation and we prove the identities of symmetric function theory needed in the following sections. In Section~4 we recall some results from Wilson \cite{Wilson-Equidistribution} about ordered set partitions, and determine some properties that we will need in Section~5 to show the equivalence between the cases $q=0$ and $t=0$ of the generalized Delta conjecture, and to reduce their proof to an identity involving ordered set partitions. In Section~6 we establish the main recursive combinatorial steps, that we will use in Section~7 to complete the proof of our main results. We conclude the article with some open problems

\section{The generalized Delta conjecture}\label{sec: combinat}

\emph{We refer to Section~\ref{sec: symfun} for notations and definitions concerning symmetric functions.}

\medskip

In \cite{Haglund-Remmel-Wilson-2015}, the authors conjectured a combinatorial interpretation for the symmetric function \[ \Delta_{h_m}\Delta'_{e_{n-k-1}}e_n \] in terms of partially labelled decorated Dyck paths, known as the \emph{generalized Delta conjecture} because it reduces to the Delta conjecture when $m=0$. We give the necessary definitions.

\begin{definition}
	A \emph{Dyck path} of size $n$ is a lattice path going from $(0,0)$ to $(n,n)$, using only north and east unit steps and staying weakly above the line $x=y$ (also called the \emph{main diagonal}). The set of Dyck paths of size $n$ will be denoted by $\mathsf{D}(n)$. A \emph{partially labelled Dyck path} is a Dyck path whose vertical steps are labelled with (not necessarily distinct) non-negative integers such that the labels appearing in each column are strictly increasing from bottom to top, and $0$ does not appear in the first column. The set of partially labelled Dyck paths with $m$ zero labels and $n$ nonzero labels is denoted by $\PLD(m,n)$.  
\end{definition}

Partially labelled Dyck paths differ from labelled Dyck paths only in that $0$ is allowed as a label in the former and not in the latter. 

\begin{definition}\label{def: monomial path}
	We define for each $D\in \PLD(m,n)$ a monomial in the variables $x_1,x_2,\dots$: we set \[ x^P \coloneqq \prod_{i=1}^{n+m} x_{l_i(P)} \] where $l_i(D)$ is the label of the $i$-th vertical step of $D$ (the first being at the bottom), where we conventionally set $x_0 = 1$. The fact that $x_0$ does not appear in the monomial explains the word \emph{partially}.
\end{definition}

\begin{definition}
	Let $D$ be a (partially labelled) Dyck path of size $n+m$. We define its \emph{area word} to be the string of integers $a(D) = a_1(D) \cdots a_{n+m}(D)$ where $a_i(D)$ is the number of whole squares in the $i$-th row (counting from the bottom) between the path and the main diagonal.
\end{definition}

\begin{definition} \label{def: rise}
	The \emph{rises} of a Dyck path $D$ are the indices \[ \Rise(D) \coloneqq \{2\leq i \leq n+m\mid a_{i}(D)>a_{i-1}(D)\},\] or the vertical steps that are directly preceded by another vertical step. Taking a subset $\DRise(D)\subseteq \Rise (D)$ and decorating the corresponding vertical steps with a $\ast$, we obtain a \emph{decorated Dyck path}, and we will refer to these vertical steps as \emph{decorated rises}. 
\end{definition}

\begin{definition}
	Given a partially labelled Dyck path, we call \emph{zero valleys} its vertical steps with label $0$ (which are necessarily preceded by an horizontal step, that is why we call them valleys).
\end{definition}

The set of partially labelled decorated Dyck paths with $m$ zero labels, $n$ nonzero labels and $k$ decorated rises is denoted by $\PLD(m,n)^{\ast k}$. See Figure~\ref{fig:pldExample1} for an example. 

\begin{figure*}[!ht]
	\centering
	\begin{tikzpicture}[scale = .8]
	
	\draw[step=1.0, gray!60, thin] (0,0) grid (8,8);
	
	\draw[gray!60, thin] (0,0) -- (8,8);
	
	\draw[blue!60, line width=2pt] (0,0) -- (0,1) -- (0,2) -- (1,2) -- (2,2) -- (2,3) -- (2,4) -- (2,5) -- (3,5) -- (4,5) -- (4,6) -- (4,7) -- (4,8) -- (5,8) -- (6,8) -- (7,8) -- (8,8);
	
	\draw (0.5,0.5) circle (0.4 cm) node {$1$};
	\draw (0.5,1.5) circle (0.4 cm) node {$3$};
	\draw (2.5,2.5) circle (0.4 cm) node {$0$};
	\draw (2.5,3.5) circle (0.4 cm) node {$4$};
	\draw (2.5,4.5) circle (0.4 cm) node {$6$};
	\draw (4.5,5.5) circle (0.4 cm) node {$0$};
	\draw (4.5,6.5) circle (0.4 cm) node {$2$};
	\draw (4.5,7.5) circle (0.4 cm) node {$6$};
	
	\node at (1.5,3.5) {$\ast$};
	\node at (3.5,6.5) {$\ast$};
	
	\end{tikzpicture}
	\caption{Example of an element in $\PLD(2,6)^{\ast 2}$.}
	\label{fig:pldExample1}
\end{figure*}

We define two statistics on this set.

\begin{definition} \label{def: area DP}
	We define the \emph{area} of a (partially labelled) decorated Dyck path $D$ as \[ \area(D) \coloneqq \sum_{i\not \in \DRise(D)} a_i(D). \]
\end{definition}

For a more visual definition, the area is the number of whole squares that lie between the path and the main diagonal, except for the ones in the rows containing a decorated rise. For example, the decorated Dyck path in Figure~\ref{fig:pldExample1} has area $7$. 

Notice that the area does not depend on the labels. 

\begin{definition} \label{def: dinv DP}
	Let $D \in \PLD(m,n)$. For $1 \leq i < j \leq n+m$, we say that the pair $(i,j)$ is an \emph{inversion} if
	\begin{itemize}
		\item either $a_i(D) = a_j(D)$ and $l_i(D) < l_j(D)$ (\emph{primary inversion}),
		\item or $a_i(D) = a_j(D) + 1$ and $l_i(D) > l_j(D)$ (\emph{secondary inversion}),
	\end{itemize}
	where $l_i(D)$ denotes the label of the vertical step in the $i$-th row.
	
	Then we define \[\dinv(D)\coloneqq \# \{ 0\leq i < j \leq n+m \mid (i,j) \; \text{is an inversion} \}.\]
\end{definition}

For example, the decorated Dyck path in Figure~\ref{fig:pldExample1} has $1$ primary inversion (the pair $(2,4)$) and $2$ secondary inversions (the pairs $(2,3)$ and $(5,6)$), so its dinv is $3$. 

Notice that the decorations on the rises do not affect the dinv.

\begin{definition}
	We define a formal series in the variables $\underline x=(x_1,x_2,\dots)$ and coefficients in $\mathbb N [q,t]$ 
	\[
	\gendelta \coloneqq \sum_{D\in \PLD(m,n)^{\ast k}}q^{\dinv(D)} t^{\area(D)}x^D.
	\]
\end{definition}

The following conjecture is stated in \cite{Haglund-Remmel-Wilson-2015}.
\begin{conjecture}[Generalized Delta]
	For $m,n,k\in \mathbb{N}$, $m\geq 0$ and $n>k\geq 0$,
	\[ \Delta_{h_{m}} \Delta'_{e_{n-k-1}} e_{n} = \gendelta . \]
\end{conjecture}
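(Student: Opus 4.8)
The plan is to pass to a \emph{compositional refinement} of the conjecture and prove it by a bivariate recursion, in the spirit of the Carlsson--Mellit proof of the Shuffle theorem (the case $m=k=0$) and of Haglund--Morse--Zabrocki's compositional reformulation of it. For a composition $\alpha=(\alpha_1,\dots,\alpha_\ell)$ recording the returns of the Dyck path to a suitably shifted diagonal, write $\PLD_\alpha(m,n)^{\ast k}$ for the set of those $D\in\PLD(m,n)^{\ast k}$ with that contact composition, let $\mathbb{C}_a$ denote the Haglund--Morse--Zabrocki creation operators, and set $\mathbb{C}_\alpha=\mathbb{C}_{\alpha_1}\cdots\mathbb{C}_{\alpha_\ell}$. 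The refined statement I would target is
\[
\Delta_{h_m}\,\Delta'_{e_{n-k-1}}\,\mathbb{C}_{\alpha}\cdot 1=\sum_{D\in\PLD_\alpha(m,n)^{\ast k}}q^{\dinv(D)}t^{\area(D)}\,x^D,
\]
which, summed over all $\alpha\vDash n+m$ and combined with the completeness $\sum_{\alpha}\mathbb{C}_{\alpha}\cdot 1=e_{n+m}$, yields the conjecture; on the left the decorated rises correspond to $\Delta'_{e_{n-k-1}}$ and the zero labels to $\Delta_{h_m}$, the latter being the genuinely new ingredient beyond the Delta conjecture.

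The combinatorial half is a recursion on the right-hand side. Deleting from a partially labelled decorated Dyck path the arch comprising everything up to and including its first return to the shifted diagonal, one tracks how $\area$, $\dinv$, the decorated set $\DRise(D)$, and the zero valleys distribute between the removed arch and the remainder; doing so rewrites $\PLD_\alpha(m,n)^{\ast k}$ in terms of the same generating functions for strictly smaller $(m,n,k)$ and reindexed compositions. The $\area$ bookkeeping is the familiar one; the $\dinv$ bookkeeping needs the standard ``$\dinv$ correction'' attached to deleting a diagonal strip, here made delicate both by decorated rises (which contribute to $\area$ but never to a primary or secondary inversion) and by zero valleys (subject to the constraint that $0$ may not label the first column).

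In parallel one must show that the left-hand side obeys the \emph{same} recursion. This is where the plethystic machinery of Section~\ref{sec: symfun} enters: one needs an ``addition formula'' resolving $\Delta_{h_m}$ into a sum over how the $m$ extra boxes are inserted, the commutation relations of the $\mathbb{C}_a$ among themselves and with the Delta operators, and the effect of $\Delta'_{e_{n-k-1}}$ on $\mathbb{C}_\alpha\cdot 1$. Matching the resulting recursion coefficient by coefficient with the combinatorial one and checking the base case of the recursion would complete the proof of the refined identity, and hence of the conjecture; the $q,t$-symmetry of the right-hand side, invisible combinatorially, would then follow a posteriori.

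The hard part — and the reason the conjecture is still open — is twofold. First, the Carlsson--Mellit Dyck path algebra was engineered for undecorated, fully labelled paths: handling decorated rises already demands a genuine extension of that formalism, and layering the $\Delta_{h_m}$ / zero-valley structure on top of it is a further, non-formal step, requiring either new operators or a substitute for the $\mathbb{C}_a$ adapted to partial labellings whose relations still close up. Second, even granting the right formalism, the needed two-parameter operator identities are precisely where all of the $q,t$-interaction is concentrated — by contrast the specializations $q=0$ and $t=0$, which this paper does establish, are tractable only because there the Delta operators degenerate to Hall--Littlewood-type or manifestly $e$-positive expressions and the recursion above can be replaced by explicit bijections with ordered set partitions. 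Thus the present paper should be read as realizing exactly the $q=0$ and $t=0$ instances of the program just outlined, the full two-parameter identity remaining out of reach of these methods.
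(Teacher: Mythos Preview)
The statement you are addressing is a \emph{conjecture}, not a theorem: the paper does not prove it, and explicitly says so (``To this day, the full conjecture remains widely open''). So there is no ``paper's own proof'' to compare against. You correctly recognise this and offer not a proof but a programme, together with an honest assessment of why it does not go through. That is fine as commentary, but it is not a proof proposal in any meaningful sense.

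One substantive inaccuracy: you describe the paper's $q=0$ and $t=0$ results as ``realizing exactly the $q=0$ and $t=0$ instances of the program just outlined,'' i.e.\ as degenerations of a compositional/Dyck-path-algebra argument. That is not what the paper does. Its method is orthogonal to the Carlsson--Mellit framework: it proves symmetric-function recursions for $h_j^\perp\bigl(\Delta_{h_m}\Delta'_{e_{n-k-1}}e_n\bigr)\big|_{t=0}$ (the ``red'' and ``blue'' formulae, via Macdonald--Koornwinder reciprocity and explicit $q$-series manipulations), establishes matching combinatorial recursions on ordered multiset partitions (the $\area=0$ and $\dinv=0$ objects, related by Wilson's insertion bijections), and concludes by the fact that $h_j^\perp$ for all $j$ determines a homogeneous symmetric function. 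No compositional refinement, no $\mathbb{C}_\alpha$ operators, no Dyck-path algebra appears. So while your outline is a reasonable speculative roadmap toward the full conjecture, it is not an accurate summary of how the special cases in this paper are actually established.
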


\section{Symmetric functions}\label{sec: symfun}
For all the undefined notations and the unproven identities, we refer to \cite{DAdderio-VandenWyngaerd-2017}*{Section~1}, where definitions, proofs and/or references can be found. In the next subsection we will limit ourselves to introduce some notation, while in the following one we will recall some identities that are going to be useful in the sequel. In the third subsection we prove a crucial theorem, that we are going to use in the fourth and final subsection, where we will prove the main results on symmetric functions of this work.

For more references on symmetric functions cf. also \cite{Macdonald-Book-1995}, \cite{Stanley-Book-1999} and \cite{Haglund-Book-2008}.

\subsection{Notation}

We denote by $\Lambda=\bigoplus_{n\geq 0}\Lambda^{(n)}$ the graded algebras of symmetric functions with coefficients in $\mathbb{Q}(q,t)$, and by $\<\, , \>$ the \emph{Hall scalar product} on $\Lambda$, which can be defined by saying that the Schur functions form an orthonormal basis.

The standard bases of the symmetric functions that will appear in our
calculations are the monomial $\{m_\lambda\}_{\lambda}$, complete $\{h_{\lambda}\}_{\lambda}$, elementary $\{e_{\lambda}\}_{\lambda}$, power $\{p_{\lambda}\}_{\lambda}$ and Schur $\{s_{\lambda}\}_{\lambda}$ bases.

\emph{We will use implicitly the usual convention that $e_0 = h_0 = 1$ and $e_k = h_k = 0$ for $k < 0$.}

For a partition $\mu\vdash n$, we denote by
\begin{align}
	\widetilde{H}_{\mu} \coloneqq \widetilde{H}_{\mu}[X]=\widetilde{H}_{\mu}[X;q,t]=\sum_{\lambda\vdash n}\widetilde{K}_{\lambda \mu}(q,t)s_{\lambda}
\end{align}
the \emph{(modified) Macdonald polynomials}, where
\begin{align}
	\widetilde{K}_{\lambda \mu} \coloneqq \widetilde{K}_{\lambda \mu}(q,t)=K_{\lambda \mu}(q,1/t)t^{n(\mu)}\quad \text{ with }\quad n(\mu)=\sum_{i\geq 1}\mu_i(i-1)
\end{align}
are the \emph{(modified) Kostka coefficients} (see \cite{Haglund-Book-2008}*{Chapter~2} for more details). 

The set $\{\widetilde{H}_{\mu}[X;q,t]\}_{\mu}$ is a basis of the ring of symmetric functions $\Lambda$. This is a modification of the basis introduced by Macdonald \cite{Macdonald-Book-1995}.

If we identify the partition $\mu$ with its Ferrers diagram, i.e. with the collection of cells $\{(i,j)\mid 1\leq i\leq \mu_j, 1\leq j\leq \ell(\mu)\}$, then for each cell $c\in \mu$ we refer to the \emph{arm}, \emph{leg}, \emph{co-arm} and \emph{co-leg} (denoted respectively as $a_\mu(c), l_\mu(c), a_\mu(c)', l_\mu(c)'$) as the number of cells in $\mu$ that are strictly to the right, above, to the left and below $c$ in $\mu$, respectively.

We set $M \coloneqq (1-q)(1-t)$ and we define for every partition $\mu$
\begin{align}
	B_{\mu} & \coloneqq B_{\mu}(q,t)=\sum_{c\in \mu}q^{a_{\mu}'(c)}t^{l_{\mu}'(c)} \\
	T_{\mu} & \coloneqq T_{\mu}(q,t)=\prod_{c\in \mu}q^{a_{\mu}'(c)}t^{l_{\mu}'(c)} \\
	\Pi_{\mu} & \coloneqq \Pi_{\mu}(q,t)=\prod_{c\in \mu/(1)}(1-q^{a_{\mu}'(c)}t^{l_{\mu}'(c)}) \\
	w_{\mu} & \coloneqq w_{\mu}(q,t)=\prod_{c\in \mu} (q^{a_{\mu}(c)} - t^{l_{\mu}(c) + 1}) (t^{l_{\mu}(c)} - q^{a_{\mu}(c) + 1}).
\end{align}

We will make extensive use of the \emph{plethystic notation} (cf. \cite{Haglund-Book-2008}*{Chapter~1}).

We have for example the addition formulas
\begin{align}
	\label{eq:e_h_sum_alphabets}
	e_n[X+Y]=\sum_{i=0}^ne_{n-i}[X]e_i[Y]\quad \text{ and } \quad  h_n[X+Y]=\sum_{i=0}^nh_{n-i}[X]h_i[Y].
\end{align}
We will also use the symbol $\epsilon$ for
\begin{equation}
	f[\epsilon X] = (-1)^k f[X]\qquad \text{ for } f[X]\in \Lambda^{(k)},
\end{equation}
so that, in general,
\begin{align}
	\label{eq:minusepsilon}
	f[-\epsilon X] = \omega f[X]
\end{align}
for any symmetric function $f$, where $\omega$ is the fundamental algebraic involution which sends $e_k$ to $h_k$, $s_{\lambda}$ to $s_{\lambda'}$ and $p_k$ to $(-1)^{k-1}p_k$.

Recall the \emph{Cauchy identities}
\begin{align}
	\label{eq:Cauchy_identities}
	h_n[XY] = \sum_{\lambda\vdash n} s_{\lambda}[X] s_{\lambda}[Y] \quad \text{ and } \quad h_n[XY] = \sum_{\lambda\vdash n} h_{\lambda}[X] m_{\lambda}[Y].
\end{align}

%

We define the \emph{nabla} operator on $\Lambda$ by
\begin{align}
	\nabla \widetilde{H}_{\mu} \coloneqq T_{\mu} \widetilde{H}_{\mu} \quad \text{ for all } \mu,
\end{align}
and we define the \emph{delta} operators $\Delta_f$ and $\Delta_f'$ on $\Lambda$ by
\begin{align}
	\Delta_f \widetilde{H}_{\mu} \coloneqq f[B_{\mu}(q,t)] \widetilde{H}_{\mu} \quad \text{ and } \quad 
	\Delta_f' \widetilde{H}_{\mu}  \coloneqq f[B_{\mu}(q,t)-1] \widetilde{H}_{\mu}, \quad \text{ for all } \mu.
\end{align}
Observe that on the vector space of symmetric functions homogeneous of degree $n$, denoted by $\Lambda^{(n)}$, the operator $\nabla$ equals $\Delta_{e_n}$. Moreover, for every $1\leq k\leq n$,
\begin{align}
	\label{eq:deltaprime}
	\Delta_{e_k} = \Delta_{e_k}' + \Delta_{e_{k-1}}' \quad \text{ on } \Lambda^{(n)},
\end{align}
and for any $k > n$, $\Delta_{e_k} = \Delta_{e_{k-1}}' = 0$ on $\Lambda^{(n)}$, so that $\Delta_{e_n}=\Delta_{e_{n-1}}'$ on $\Lambda^{(n)}$.

\medskip

For a given $k\geq 1$, we define the \emph{Pieri coefficients} $c_{\mu \nu}^{(k)}$ and $d_{\mu \nu}^{(k)}$ by setting
\begin{align}
	\label{eq:def_cmunu} h_{k}^\perp \widetilde{H}_{\mu}[X] & =\sum_{\nu \subset_k \mu} c_{\mu \nu}^{(k)} \widetilde{H}_{\nu}[X], \\
	\label{eq:def_dmunu} e_{k}\left[\frac{X}{M}\right] \widetilde{H}_{\nu}[X] & = \sum_{\mu \supset_k \nu} d_{\mu \nu}^{(k)} \widetilde{H}_{\mu}[X],
\end{align}
where $\nu\subset_k \mu$ means that $\nu$ is contained in $\mu$ (as Ferrers diagrams) and $\mu/\nu$ has $k$ lattice cells, and the symbol $\mu \supset_k \nu$ is analogously defined. The following identity is well-known:
\begin{align}
	\label{eq:rel_cmunu_dmunu} 
	c_{\mu \nu}^{(k)} = \frac{w_{\mu}}{w_{\nu}}d_{\mu \nu}^{(k)}.
\end{align}

The following summation formula is also well-known (e.g. cf. \cite{DAdderio-VandenWyngaerd-2017}*{Equation~1.35}):
\begin{equation} \label{eq:sumBmu}
	\sum_{\nu\subset_1\mu}c_{\mu \nu}^{(1)} = B_\mu,
\end{equation}
while the following one is proved right after Equation~(5.4) in \cite{DAdderio-VandenWyngaerd-2017}: for $\alpha\vdash n$,
\begin{equation} \label{eq:summation}
	\sum_{\nu\subset_\ell \alpha}c_{\alpha \nu}^{(\ell)}  T_\nu = e_{n-\ell}[B_\alpha].
\end{equation}

\medskip
%
%

Recall also the standard notation for $q$-analogues: for $n, k\in \mathbb{N}$, we set
\begin{align}
	[0]_q \coloneqq 0, \quad \text{ and } \quad [n]_q \coloneqq \frac{1-q^n}{1-q} = 1+q+q^2+\cdots+q^{n-1} \quad \text{ for } n \geq 1,
\end{align}
\begin{align}
	[0]_q! \coloneqq 1 \quad \text{ and }\quad [n]_q! \coloneqq [n]_q[n-1]_q \cdots [2]_q [1]_q \quad \text{ for } n \geq 1,
\end{align}
and
\begin{align}
	\qbinom{n}{k}_q  \coloneqq \frac{[n]_q!}{[k]_q![n-k]_q!} \quad \text{ for } n \geq k \geq 0, \quad \text{ while } \quad \qbinom{n}{k}_q \coloneqq 0 \quad \text{ for } n < k.
\end{align}

Recall also (cf. \cite{Stanley-Book-1999}*{Theorem~7.21.2}) that
\begin{align}
	\label{eq:h_q_binomial}
	h_k[[n]_q] = \qbinom{n+k-1}{k}_q \quad \text{ for } n \geq 1 \text{ and } k \geq 0,
\end{align}
and
\begin{align} \label{eq:e_q_binomial}
	e_k[[n]_q] = q^{\binom{k}{2}} \qbinom{n}{k}_q \quad \text{ for all } n, k \geq 0.
\end{align}


\subsection{Some useful identities}


We will use the following form of \emph{Macdonald-Koornwinder reciprocity}: for all nonempty partitions $\alpha$ and $\beta$
\begin{align}
	\label{eq:Macdonald_reciprocity}
	\frac{\widetilde{H}_{\alpha}[MB_{\beta}]}{\Pi_{\alpha}} = \frac{\widetilde{H}_{\beta}[MB_{\alpha}]}{\Pi_{\beta}}.
\end{align}
The following identity is also known as \emph{Cauchy identity}:
\begin{align}
	\label{eq:Mac_Cauchy}
	e_n \left[ \frac{XY}{M} \right] = \sum_{\mu \vdash n} \frac{ \widetilde{H}_{\mu} [X] \widetilde{H}_\mu [Y]}{w_\mu} \quad \text{ for all } n.
\end{align}

We need the following well-known proposition.
\begin{proposition} 
	For $n\in \mathbb{N}$ we have
	\begin{align}
		\label{eq:en_expansion}
		e_n[X] = e_n \left[ \frac{XM}{M} \right] = \sum_{\mu \vdash n} \frac{M B_\mu \Pi_{\mu} \widetilde{H}_\mu[X]}{w_\mu}.
	\end{align}
	Moreover, for all $k\in \mathbb{N}$ with $0\leq k\leq n$, we have
	\begin{align}
		\label{eq:e_h_expansion}
		h_k \left[ \frac{X}{M} \right] e_{n-k} \left[ \frac{X}{M} \right] = \sum_{\mu \vdash n} \frac{e_k[B_\mu] \widetilde{H}_\mu[X]}{w_\mu},
	\end{align}
	and
	\begin{align}
		\label{eq:p_expansion}
		\omega (p_n[X]) = [n]_q[n]_t\sum_{\mu \vdash n} \frac{M\Pi_\mu\widetilde{H}_\mu[X]}{w_\mu}.
	\end{align}
\end{proposition}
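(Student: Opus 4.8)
The plan is to derive all three identities from the Cauchy identity \eqref{eq:Mac_Cauchy}, Macdonald reciprocity \eqref{eq:Macdonald_reciprocity}, the addition formula \eqref{eq:e_h_sum_alphabets}, and the single standard evaluation
\[
	\widetilde{H}_{\mu}[1-z] \;=\; \prod_{c\in\mu}\bigl(1 - z\, q^{a_{\mu}'(c)}t^{l_{\mu}'(c)}\bigr),
\]
which can be found in \cite{Haglund-Book-2008} (and is among the facts collected in \cite{DAdderio-VandenWyngaerd-2017}*{Section~1}). The point is that \eqref{eq:en_expansion}, \eqref{eq:e_h_expansion} and \eqref{eq:p_expansion} are respectively the value at $Y=M$, the coefficient of $(-z)^k$, and the residue at $z=1$ of one ``master'' identity obtained by feeding $Y=1-z$ into \eqref{eq:Mac_Cauchy}.

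First I would dispatch \eqref{eq:en_expansion}. For $n\geq 1$, put $\beta=(1)$ in \eqref{eq:Macdonald_reciprocity}; since $B_{(1)}=\Pi_{(1)}=1$ and $\widetilde{H}_{(1)}[Z]=s_1[Z]=Z$, this collapses to $\widetilde{H}_{\mu}[M]=M B_{\mu}\Pi_{\mu}$. Then apply \eqref{eq:Mac_Cauchy} with $Y=M$ and use $e_n[X]=e_n[XM/M]$ (the case $n=0$ being trivial).

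For the remaining two, plug $Y=1-z$ into \eqref{eq:Mac_Cauchy} and expand $\prod_{c\in\mu}(1-zq^{a_{\mu}'(c)}t^{l_{\mu}'(c)}) = \sum_{k\ge 0}(-z)^k e_k[B_\mu]$ to get the master identity
\[
	e_n\!\left[\frac{X(1-z)}{M}\right] \;=\; \sum_{\mu\vdash n}\frac{\widetilde{H}_{\mu}[X]}{w_{\mu}}\sum_{k\geq 0}(-z)^k e_k[B_\mu].
\]
On the other hand, writing $X(1-z)/M = \tfrac{X}{M} - \tfrac{zX}{M}$ and applying \eqref{eq:e_h_sum_alphabets} together with $e_m\!\left[-\tfrac{zX}{M}\right] = (-1)^m z^m h_m\!\left[\tfrac{X}{M}\right]$ gives $e_n[X(1-z)/M] = \sum_{k=0}^{n}(-z)^k h_k\!\left[\tfrac{X}{M}\right] e_{n-k}\!\left[\tfrac{X}{M}\right]$; comparing coefficients of $(-z)^k$ yields \eqref{eq:e_h_expansion}. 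To obtain \eqref{eq:p_expansion}, divide the master identity by $(1-z)$ and let $z\to 1$: on the right, the corner cell $(0,0)$ supplies the factor $(1-z)$, so $\prod_{c\in\mu}(1-zq^{a_{\mu}'(c)}t^{l_{\mu}'(c)})/(1-z)\to \Pi_\mu$ and the right side tends to $\sum_{\mu\vdash n}\Pi_\mu\widetilde{H}_\mu[X]/w_\mu$; on the left, using $p_k[X(1-z)/M] = (1-z^k)p_k[X]/\bigl((1-q^k)(1-t^k)\bigr)$ inside $\sum_n e_n[X(1-z)/M]\,r^n = \exp\sum_{k\ge1}\tfrac{(-1)^{k-1}}{k}r^k p_k[X(1-z)/M]$ and differentiating at $z=1$ gives $\lim_{z\to1} e_n[X(1-z)/M]/(1-z) = (-1)^{n-1}p_n[X]/\bigl((1-q^n)(1-t^n)\bigr)$. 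Since $\omega p_n = (-1)^{n-1}p_n$ and $(1-q^n)(1-t^n)=M[n]_q[n]_t$, rearranging is exactly \eqref{eq:p_expansion}.

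I do not expect a genuine obstacle: once the evaluation $\widetilde{H}_{\mu}[1-z]=\prod_{c\in\mu}(1-zq^{a_{\mu}'(c)}t^{l_{\mu}'(c)})$ is granted --- it is classical, and could alternatively be proved from the Haglund--Haiman--Loehr combinatorial formula or from another instance of reciprocity --- everything is routine plethystic bookkeeping. The only places needing care are keeping track of the signs in the $e$/$h$ conversion and justifying the $z\to1$ limit, which is painless because both sides are polynomials in $z$ vanishing at $z=1$, so the limit is simply minus the derivative there.
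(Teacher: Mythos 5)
The paper does not in fact prove this proposition: it is stated as ``well-known'' and the reader is sent to \cite{DAdderio-VandenWyngaerd-2017}*{Section~1} for references. Your self-contained derivation is correct. The master identity obtained by substituting $Y=1-z$ into \eqref{eq:Mac_Cauchy} and using $\widetilde{H}_{\mu}[1-z]=\prod_{c\in\mu}(1-zq^{a_{\mu}'(c)}t^{l_{\mu}'(c)})=\sum_{k}(-z)^k e_k[B_\mu]$ does simultaneously produce \eqref{eq:en_expansion} (at $z=1-M$, or equivalently via $\widetilde{H}_\mu[M]=MB_\mu\Pi_\mu$ from reciprocity with $\beta=(1)$), \eqref{eq:e_h_expansion} (coefficient of $(-z)^k$, using $e_m[-zX/M]=(-z)^m h_m[X/M]$), and \eqref{eq:p_expansion} ($z\to1$ limit of the quotient by $1-z$, where the $\log$-derivative of $\sum_n e_n\,r^n$ and $(1-q^n)(1-t^n)=M[n]_q[n]_t$ give the stated constant). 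The only input beyond the paper's own identities is the principal-specialization formula for $\widetilde{H}_\mu[1-z]$, which you correctly flag as classical; this is a reasonable black box since it sits at the same level of standardness as \eqref{eq:Macdonald_reciprocity} and \eqref{eq:Mac_Cauchy}.
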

We will make use of \cite{Haglund-Schroeder-2004}*{Theorem~2.6}, i.e. for any $A,F\in \Lambda$ homogeneous
\begin{equation} \label{eq:HaglundThm}
	\sum_{\mu\vdash n}\Pi_\mu F[MB_\mu]d_{\mu\nu}^A=\Pi_\nu\left(\Delta_{A[MX]}F[X]\right)[MB_\nu],
\end{equation}
where $d_{\mu\nu}^A$ is the generalized Pieri coefficient defined by
\begin{equation}
	\sum_{\mu\supset \nu}d_{\mu\nu}^A\widetilde{H}_\mu=A\widetilde{H}_\nu.
\end{equation}

We will use the following theorem from \cite{DAdderio-VandenWyngaerd-2017}.
\begin{theorem}[\cite{DAdderio-VandenWyngaerd-2017}*{Theorem~3.1}]
	For $b,k\geq 1$ and $m\geq 0$, we have
	\begin{align}  \label{eq:mastereq}
		& \hspace{-1cm} \sum_{\gamma\vdash b}\frac{\widetilde{H}_\gamma[X]}{w_\gamma} h_k[(1-t)B_\gamma]e_m[B_\gamma]= \\
		\notag & =\sum_{j=0}^{m} t^{m-j}\sum_{s=0}^{k}q^{\binom{s}{2}} \qbinom{s+j}{s}_q \qbinom{k+j-1}{s+j-1}_q h_{s+j}\left[\frac{X}{1-q}\right] h_{m-j}\left[\frac{X}{M}\right] e_{b-s-m}\left[\frac{X}{M}\right].
	\end{align}
\end{theorem}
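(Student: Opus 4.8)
The plan is to recognize the left‑hand side of \eqref{eq:mastereq} as a single Delta operator applied to $e_b[X/M]$, then to evaluate it via Macdonald reciprocity \eqref{eq:Macdonald_reciprocity} together with Haglund's identity \eqref{eq:HaglundThm}, reading off the $q$‑binomial coefficients only at the very end from \eqref{eq:h_q_binomial}--\eqref{eq:e_q_binomial}. For the reformulation: setting $k=0$ in \eqref{eq:e_h_expansion} gives $e_b[X/M]=\sum_{\gamma\vdash b}\widetilde{H}_\gamma[X]/w_\gamma$; since plethystic evaluation at $B_\gamma$ is a ring homomorphism, the operators $\Delta_f$ (with $\Delta_f\widetilde{H}_\gamma=f[B_\gamma]\widetilde{H}_\gamma$) satisfy $\Delta_f\Delta_g=\Delta_{fg}$, so with $f=e_m$ and $g=h_k[(1-t)X]$,
\[
\sum_{\gamma\vdash b}\frac{\widetilde{H}_\gamma[X]}{w_\gamma}\,h_k[(1-t)B_\gamma]\,e_m[B_\gamma]=\Delta_{e_m[X]\,h_k[(1-t)X]}\,e_b[X/M].
\]
Since $M=(1-q)(1-t)$, the symmetric function $A\coloneqq e_m[X/M]\,h_k[X/(1-q)]$ satisfies $A[MX]=e_m[X]\,h_k[(1-t)X]$, so $\Delta_{A[MX]}$ is exactly this operator — the form in which \eqref{eq:HaglundThm} applies. (The Delta‑conjecture case $m=0$ is the sub‑case $A=h_k[X/(1-q)]$; the argument below is uniform in $m$.)

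Next, a symmetric function homogeneous of degree $b$ is determined by its values on the alphabets $MB_\nu$, $\nu\vdash b$ (the matrix $\big(\widetilde{H}_\mu[MB_\nu]\big)_{\mu,\nu\vdash b}$ is invertible, by reciprocity and triangularity). Taking $F=e_b[X/M]$ and the above $A$ in \eqref{eq:HaglundThm}, and using $F[MB_\mu]=e_b[B_\mu]$, gives for every partition $\nu$
\[
\big(\Delta_{A[MX]}e_b[X/M]\big)[MB_\nu]=\frac{1}{\Pi_\nu}\sum_{\mu\supset_{m+k}\nu}\Pi_\mu\,e_b[B_\mu]\,d_{\mu\nu}^{A},
\]
where $d_{\mu\nu}^{A}$ is the Pieri coefficient of multiplication by $A$. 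Factoring $A=e_m[X/M]\cdot h_k[X/(1-q)]$, iterating \eqref{eq:def_dmunu} for the $e_m[X/M]$‑factor together with the product formula for the Pieri coefficients of $h_k[X/(1-q)]$ expands $d_{\mu\nu}^{A}$ as a sum over an intermediate partition of products of modified Pieri coefficients; a second use of reciprocity, or of the summation identity \eqref{eq:summation} rewritten through \eqref{eq:rel_cmunu_dmunu}, collapses the $\mu$‑sum and leaves an expression purely in $B_\nu$. On the other hand, at $X=MB_\nu$ the right‑hand side of \eqref{eq:mastereq} is completely explicit: using $MB_\nu/(1-q)=(1-t)B_\nu$ and $MB_\nu/M=B_\nu$ it equals
\[
\sum_{j=0}^{m}\sum_{s=0}^{k}t^{m-j}q^{\binom s2}\qbinom{s+j}{s}_q\qbinom{k+j-1}{s+j-1}_q\,h_{s+j}[(1-t)B_\nu]\,h_{m-j}[B_\nu]\,e_{b-s-m}[B_\nu].
\]
So the theorem is reduced to the equality of these two functions of $\nu$ (equivalently, one could compare the $\widetilde{H}$‑coefficients of the two sides directly, using the same Pieri expansions).

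In the expressions above the plethystic constant $1/(1-q)=1+q+q^2+\cdots$ — the limiting $q$‑integer $[N]_q$ — enters only through the $h_k[X/(1-q)]$‑factor, so the numerical coefficients that appear are values of $h_\bullet$ and $e_\bullet$ on $q$‑integers, which by \eqref{eq:h_q_binomial}--\eqref{eq:e_q_binomial} are exactly $q$‑binomials and $q$‑binomials weighted by $q^{\binom{\bullet}{2}}$; this produces the factors $q^{\binom s2}\qbinom{s+j}{s}_q$ and $\qbinom{k+j-1}{s+j-1}_q$, while the powers $t^{m-j}$ record how many boxes of the expansion $h_k[(1-t)X]=\sum_i(-t)^i e_i[X]h_{k-i}[X]$ are absorbed into the $e_m$‑part. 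One then checks, using the $q$‑Chu--Vandermonde and $q$‑Pascal identities together with the conventions $e_{<0}=h_{<0}=0$ and $\qbinom{\cdot}{\cdot}_q=0$ for negative lower argument (which dispose of the boundary cases $s=k$, $b=s$, $j\in\{0,m\}$, $b-s-m<0$), that the two sides of the reduced identity coincide.

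The step I expect to be the main obstacle is this last verification: it is not conceptual but a genuinely intricate $q$‑binomial bookkeeping, in which one must keep straight three interacting summation indices — $s$ from the Pieri expansion of $h_k[X/(1-q)]$, $j$ from its interaction with the $e_m$‑factor, and the partition index — and confirm that the coefficients produced by the plethystic specializations of \eqref{eq:h_q_binomial}--\eqref{eq:e_q_binomial} reassemble exactly into the stated product of $q$‑binomials. A secondary technical point is that the product formula for the Pieri coefficients of $h_k[X/(1-q)]$ invoked in the second step is not among the identities recalled in the excerpt; it has to be imported as a standard fact, or circumvented by expanding $h_k[X/(1-q)]$ as a combination of the $e_i[X/M]$ and $h_j[X/M]$ and reducing everything to \eqref{eq:def_dmunu} and \eqref{eq:rel_cmunu_dmunu}.
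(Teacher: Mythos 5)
This theorem is not proved in the paper: it is imported verbatim from \cite{DAdderio-VandenWyngaerd-2017}*{Theorem~3.1}, so there is no in-paper proof for your attempt to be compared against. Judged on its own terms, your reformulation of the left-hand side as $\Delta_{A[MX]}\,e_b[X/M]$ with $A=e_m[X/M]\,h_k[X/(1-q)]$ (using $\Delta_f\Delta_g=\Delta_{fg}$, $e_b[X/M]=\sum_{\gamma\vdash b}\widetilde H_\gamma/w_\gamma$, and $M/(1-q)=1-t$) is correct and is the natural opening move; it mirrors the way the paper deploys \eqref{eq:HaglundThm} in the proof of the crucial Theorem in the subsection ``A crucial theorem at $t=0$.'' The strategy of pinning down a degree-$b$ symmetric function by its values on the alphabets $MB_\nu$, $\nu\vdash b$, is also legitimate and standard.

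However, what you have written is a roadmap rather than a proof. The two load-bearing steps are explicitly deferred: (a) the collapse of $\sum_{\mu}\Pi_\mu\,e_b[B_\mu]\,d^{A}_{\mu\nu}$ into a closed expression in $B_\nu$, and (b) the verification that this expression matches $\sum_{j,s}t^{m-j}q^{\binom s2}\qbinom{s+j}{s}_q\qbinom{k+j-1}{s+j-1}_q h_{s+j}[(1-t)B_\nu]h_{m-j}[B_\nu]e_{b-s-m}[B_\nu]$ via $q$-Chu--Vandermonde/$q$-Pascal. These are not ``bookkeeping''; they are the entire content of the theorem, and nothing in the sketch certifies that the three interacting indices resolve to precisely the stated coefficients (indeed, no computation is attempted). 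In addition, you lean on a ``product formula for the Pieri coefficients of $h_k[X/(1-q)]$'' which is neither stated, cited, nor derivable from the identities \eqref{eq:def_cmunu}--\eqref{eq:summation} recalled in the paper; this is a genuine missing ingredient rather than a minor technicality. Until (a), (b), and that Pieri input are actually carried out, the argument does not establish \eqref{eq:mastereq}.
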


\subsection{A crucial theorem at $t=0$}

We introduce the following notation: for $i\geq 1$, $j\geq 0$ and $m\geq 0$
\begin{equation}
	A(i,j,m):=\sum_{\mu\vdash i+j}e_{m}[B_\mu]e_i[B_\mu]\frac{\widetilde{H}_\mu[X]}{w_\mu}.
\end{equation}
It is easy to see that $A(i,j,m)$ is symmetric in $q$ and $t$.

The goal of this subsection is to prove the following theorem.
\begin{theorem}
	For $i\geq 1$, $j\geq 0$ and $m\geq 0$, we have  
	\begin{align} \label{crucial_thm}
		\left. A(i,j,m) \right|_{q=0} & =  \sum_{s=0}^{i}t^{\binom{i-s}{2}} \qbinom{m}{i-s}_t \qbinom{s+m}{s}_t  \left.h_{s+m}\left[\frac{X}{1-t}\right]  e_{i+j-s-m}\left[\frac{X}{M}\right] \right|_{q=0} .
	\end{align}
	Moreover, we get an equivalent identity if we exchange $q$ and $t$ everywhere in this formula.
\end{theorem}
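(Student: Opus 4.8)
The plan is to derive \eqref{crucial_thm} from the master identity \eqref{eq:mastereq} by specializing appropriately and then setting $q=0$. The key observation is that $A(i,j,m)$ is the left-hand side of \eqref{eq:mastereq} with $b=i+j$, but with the factor $h_k[(1-t)B_\gamma]$ replaced by $e_i[B_\gamma]$ rather than $h_k[(1-t)B_\gamma]$. So the first step is to massage \eqref{eq:mastereq} so that the role of $h_k[(1-t)B_\gamma]$ is taken by an elementary symmetric function in $B_\gamma$. Concretely, I would exploit the $q\leftrightarrow t$ symmetry of the whole setup: $A(i,j,m)$ is symmetric in $q,t$, and there is a companion of \eqref{eq:mastereq} with $q$ and $t$ exchanged, which would turn the $h_k[(1-t)B_\gamma]$ into $h_k[(1-q)B_\gamma]$. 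The cleaner route, though, is to use the plethystic identity $h_k[(1-t)Y]=\sum_r (-t)^{?}\cdots$ — more precisely, I would invert the relation and instead \emph{start} from the expansion I actually want: note that $e_i[B_\gamma]$ can be obtained from the family $\{h_k[(1-t)B_\gamma]\}_k$ by a triangular (in fact invertible over $\mathbb{Q}(q,t)$) change of basis, since both $\{h_k[(1-t)Y]\}$ and $\{e_k[Y]\}$, together with the multiplication structure, are determined by the power sums $p_r[(1-t)Y]=(1-t^r)p_r[Y]$. So the real content is a single-variable plethystic manipulation expressing $e_i$ in terms of the $h_k[(1-t)(\,\cdot\,)]$.

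More efficiently, I would bypass that and go back one step further: the identity \eqref{eq:mastereq} itself was surely proved in \cite{DAdderio-VandenWyngaerd-2017} from \eqref{eq:HaglundThm} applied with a suitable $A$ and $F$. To get $A(i,j,m)$ I want the operator/function combination producing $e_i[B_\mu]e_m[B_\mu]$ against $\widetilde H_\mu/w_\mu$. By \eqref{eq:e_h_expansion}, $\sum_\mu \frac{e_i[B_\mu]\widetilde H_\mu[X]}{w_\mu}=h_i[\tfrac{X}{M}]e_{i+j-i}[\tfrac{X}{M}]$ after fixing $|\mu|=i+j$; more generally one multiplies by $e_m[B_\mu]$, which corresponds to applying $\Delta_{e_m}$-type operators, i.e. acting by $e_m[MX]$ inside \eqref{eq:HaglundThm}. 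So the plan is: (i) write $A(i,j,m)=\sum_{\mu\vdash i+j} e_m[B_\mu]\,\big(e_i[B_\mu]\tfrac{\widetilde H_\mu[X]}{w_\mu}\big)$ and recognize the inner sum via \eqref{eq:e_h_expansion}; (ii) apply \eqref{eq:HaglundThm} with $A=e_m[MX]$ and $F = h_i[\tfrac X M]e_j[\tfrac X M]$ to turn the $e_m[B_\mu]$ factor into an explicit operator applied to that product; (iii) expand $\Delta_{e_m[MX]}$ acting on $h_i[\tfrac X M]e_j[\tfrac X M]$ using the known action of $e_m[MX]$ on the $\widetilde H$ basis, collecting the result as a sum of products $h_a[\tfrac{X}{1-q}]h_b[\tfrac X M]e_c[\tfrac X M]$ with $q,t$-binomial coefficients — this is exactly the shape of the right side of \eqref{eq:mastereq}, and is where I would simply cite or re-run the computation of \cite{DAdderio-VandenWyngaerd-2017}*{Theorem~3.1} with $e_i$ in place of $h_k[(1-t)(\cdot)]$.

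Once an exact ($q,t$) formula of the form
\[
A(i,j,m)=\sum_{j'}\sum_{s} (\text{monomial in }q,t)\;\qbinom{\,\cdot\,}{\,\cdot\,}_q\,\qbinom{\,\cdot\,}{\,\cdot\,}_q\; h_{s+j'}\!\left[\tfrac{X}{1-q}\right] h_{?}\!\left[\tfrac X M\right] e_{?}\!\left[\tfrac X M\right]
\]
is in hand, setting $q=0$ is the easy final step: $\qbinom{a}{b}_q\big|_{q=0}=1$ whenever $a\ge b\ge 0$, $h_{s+j'}[\tfrac{X}{1-q}]\big|_{q=0}=h_{s+j'}[X]$, $h_b[\tfrac X M]\big|_{q=0}=h_b[\tfrac{X}{1-t}]$ (since $M|_{q=0}=1-t$), and similarly $e_c[\tfrac X M]\big|_{q=0}=e_c[\tfrac{X}{1-t}]$; the surviving powers of $q$ kill all but one value of the inner index, collapsing the double sum to the single sum over $s$ on the right of \eqref{crucial_thm}. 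I should double check that $h_{s+j'}[X]|_{q=0}$ combines with the leftover $h$ and $e$ factors to give precisely $h_{s+m}[\tfrac{X}{1-t}]e_{i+j-s-m}[\tfrac{X}{M}]\big|_{q=0}$ after identifying $j'$ with the forced value; the bookkeeping of which binomial becomes $\qbinom{m}{i-s}_t$ versus $\qbinom{s+m}{s}_t$ and of the power $t^{\binom{i-s}{2}}$ is the one genuinely delicate point, and I expect to get it by carefully tracking the $e_m[MX]$-action coefficients (which carry the $q^{\binom{s}{2}}$ that becomes $t^{\binom{i-s}{2}}$ after the $q\leftrightarrow t$ swap). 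The final sentence, that the same identity holds with $q$ and $t$ exchanged, is then immediate from the $q\leftrightarrow t$ symmetry of $A(i,j,m)$ noted before the theorem: apply the just-proved identity, then swap, since the left side is invariant. The main obstacle is thus not conceptual but is getting the triangular change of basis / operator-action coefficients exactly right so that the many binomials and $t$-powers reassemble into the compact closed form stated; everything else is a specialization.
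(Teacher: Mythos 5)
Your step (i) — recognizing $A(i,j,m)$ as $\Delta_{e_m}$ applied to $h_i[\tfrac X M]e_j[\tfrac X M]$ via \eqref{eq:e_h_expansion} — is exactly where the paper starts. But from there your plan diverges from what the paper does, and the divergence leaves a genuine gap.

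The paper does not derive an exact $(q,t)$ analogue of \eqref{eq:mastereq} with $e_i[B_\gamma]$ in place of $h_k[(1-t)B_\gamma]$ and then set $q=0$. Instead, it specializes \emph{first}: since $M\big|_{t=0}=1-q$, one has $h_i[\tfrac X M]\big|_{t=0}=h_i[\tfrac{X}{1-q}]\big|_{t=0}$, and this cheap but decisive substitution is what brings the problem into the orbit of \cite{Garsia-Hicks-Stout-2011}*{Proposition~2.6}, which rewrites $h_i[\tfrac{X}{1-q}]e_j[\tfrac X M]$ as a sum over $\mu$ whose $\mu$-coefficients are $\sum_r \qbinom{i-1}{r-1}_q q^{\binom r2+r-ir}(-1)^{i-r}h_r[(1-t)B_\mu]$. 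Only after this conversion does the master equation \eqref{eq:mastereq} apply, because that theorem requires the $h_r[(1-t)B_\gamma]$ form, not $e_i[B_\gamma]$. Your discussion of a ``triangular change of basis between $\{h_k[(1-t)Y]\}$ and $\{e_k[Y]\}$'' gestures at this but never produces the needed expansion, and your alternative — re-running the proof of \eqref{eq:mastereq} from \eqref{eq:HaglundThm} with $e_i$ in place of $h_k[(1-t)(\cdot)]$ — is asserted, not carried out, and it is not clear that it produces a manageable closed form.

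Your final claim that ``everything else is a specialization'' is also wrong. After applying \eqref{eq:mastereq} the paper is confronted with a double sum over $r$ and $s$, and the collapse to the single sum over $s$ in \eqref{crucial_thm} is \emph{not} achieved by setting $q=0$ (the computation is already at $t=0$, with $q$ generic). It comes from the $q$-binomial identity
\[
\sum_{r=1}^i q^{\binom{s}{2}}\qbinom{i-1}{r-1}_q q^{\binom{r}{2}+r-ir}(-1)^{i-r}\qbinom{r+m-1}{s+m-1}_q = q^{\binom{i-s}{2}}\qbinom{m}{i-s}_q,
\]
which is a genuine identity valid for all $q$ (an instance of \cite{DAdderio-VandenWyngaerd-2017}*{Lemma~3.6}), not a degeneration. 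Your sketch never isolates this identity or anything equivalent to it. So while you correctly invoke the $q\leftrightarrow t$ symmetry of $A(i,j,m)$ for the last sentence of the theorem, the two real ingredients — the $t=0$ reduction $h_i[\tfrac X M]\to h_i[\tfrac{X}{1-q}]$ feeding into the Garsia--Hicks--Stout expansion, and the $q$-binomial summation lemma — are both missing from your argument.
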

\begin{proof}
	The last statement follows easily from the symmetry of $A(i,j,m)$ in $q$ and $t$.
	
	So we will prove the formula \eqref{crucial_thm} with $q$ and $t$ interchanged. 
	
	Recall from \eqref{eq:e_h_expansion} that
	\begin{equation}
		h_i\left[\frac{X}{M}\right] e_j\left[\frac{X}{M}\right] = \sum_{\mu\vdash i+j}e_i[B_\mu]\frac{\widetilde{H}_\mu[X]}{w_\mu}.
	\end{equation}
	We want a formula for
	\begin{equation}
		\left. \Delta_{e_{m}}\left(h_i\left[\frac{X}{M}\right] e_j\left[\frac{X}{M}\right]\right) \right|_{t=0} = \left. \sum_{\mu\vdash i+j}e_{m}[B_\mu]e_i[B_\mu]\frac{\widetilde{H}_\mu[X]}{w_\mu} \right|_{t=0} .
	\end{equation}
	
	We start by observing that
	\begin{equation}
		\left. h_i\left[\frac{X}{M}\right] e_j\left[\frac{X}{M}\right]\right|_{t=0} = \left. h_i\left[\frac{X}{1-q}\right] e_j\left[\frac{X}{M}\right]\right|_{t=0}
	\end{equation}
	
	The following proposition is proved in \cite{Garsia-Hicks-Stout-2011}.
	\begin{proposition} [\cite{Garsia-Hicks-Stout-2011}*{Proposition~2.6}]
		For $i\geq 1$ and $j\geq 0$ we have
		\begin{equation} \label{eq:GHS_2_6}
			h_i\left[\frac{X}{1-q}\right] e_j\left[\frac{X}{M}\right]  = \sum_{\mu\vdash i+j} \frac{\widetilde{H}_\mu[X]}{w_\mu} \sum_{r=1}^i \begin{bmatrix}
				i-1\\
				r-1
			\end{bmatrix}_q q^{\binom{r}{2}+r-ir}(-1)^{i-r}h_r[(1-t)B_\mu].
		\end{equation}
	\end{proposition}
	We get
	\begin{align*} 
		\left.\Delta_{e_{m}}\left(h_i\left[\frac{X}{M}\right] e_j\left[\frac{X}{M}\right]\right)\right|_{t=0}& =\left.\Delta_{e_{m}}\left(h_i\left[\frac{X}{1-q}\right] e_j\left[\frac{X}{M}\right]\right)\right|_{t=0}\\
		\text{(using \eqref{eq:GHS_2_6})}& =  \sum_{r=1}^i \qbinom{i-1}{r-1}_q q^{\binom{r}{2}+r-ir}(-1)^{i-r} \sum_{\mu\vdash i+j} \left.\frac{\widetilde{H}_\mu[X]}{w_\mu}h_r[(1-t)B_\mu] e_{m}[B_\mu]\right|_{t=0} \\
		\text{(using \eqref{eq:mastereq})}& =  \sum_{r=1}^i \qbinom{i-1}{r-1}_q q^{\binom{r}{2}+r-ir}(-1)^{i-r} \times\\
		& \quad \times   \left.\sum_{s=0}^{r}q^{\binom{s}{2}} \qbinom{s+m}{s}_q \qbinom{r+m-1}{s+m-1}_q h_{s+m}\left[\frac{X}{1-q}\right]  e_{i+j-s-m}\left[\frac{X}{M}\right] \right|_{t=0} \\
		& =  \sum_{s=0}^{i}q^{\binom{s}{2}} \qbinom{s+m}{s}_q  \left.h_{s+m}\left[\frac{X}{1-q}\right]  e_{i+j-s-m}\left[\frac{X}{M}\right] \right|_{t=0} \times\\
		& \quad \times  \sum_{r=\max (1,s)}^i \qbinom{i-1}{r-1}_q q^{\binom{r}{2}+r-ir}(-1)^{i-r}\qbinom{r+m-1}{s+m-1}_q \\
		& =  \sum_{s=0}^{i}q^{\binom{i-s}{2}} \qbinom{m}{i-s}_q \qbinom{s+m}{s}_q  \left.h_{s+m}\left[\frac{X}{1-q}\right]  e_{i+j-s-m}\left[\frac{X}{M}\right] \right|_{t=0},
	\end{align*}
	where in the last equality we used the following lemma:
	\begin{lemma}
		For $s\geq 0$, $m\geq 0$ and $i\geq 1$, we have
		\begin{equation} \label{eq:lemma_elementar1}
			\sum_{r=1}^i q^{\binom{s}{2}}\qbinom{i-1}{r-1}_q q^{\binom{r}{2}+r-ir}(-1)^{i-r}\qbinom{r+m-1}{s+m-1}_q = q^{\binom{i-s}{2}}   \qbinom{m}{i-s}_q.
		\end{equation}
	\end{lemma}
	\begin{proof}
		This is none other than \cite{DAdderio-VandenWyngaerd-2017}*{Lemma~3.6} with $a$ replaced by $-s$ and $s$ replaced by $m+s$. Notice that, even if it is stated for $a\geq 0$, the proof in \cite{DAdderio-VandenWyngaerd-2017} actually works for $a\geq -i$.
	\end{proof}
	This completes the proof of \eqref{crucial_thm} with $q$ and $t$ interchanged.
\end{proof}

\subsection{Red and blue formulae}

The following notation will be useful: for $n\geq 1$, $m\geq 0$ and $1\leq k \leq n$, we set
\begin{equation}
	{ }^tC_{n,k}^{(m)}:= \left.\Delta_{h_{m}}\Delta_{e_{k-1}}'e_n\right|_{q=0}\quad \text{ and } \quad { }^qC_{n,k}^{(m)}:= \left.\Delta_{h_{m}}\Delta_{e_{k-1}}'e_n\right|_{t=0}.
\end{equation}
\begin{remark} \label{rem:qt_symmetry}
	Since $\Delta_{h_{m}}\Delta_{e_{k-1}}'e_n$ is symmetric in $q$ and $t$, we have
	\begin{equation}
		{ }^tC_{n,k}^{(m)}=\left. { }^qC_{n,k}^{(m)}\right|_{q=t}.
	\end{equation}
\end{remark}
The goal of this subsection is to prove the following theorem.
\begin{theorem} \label{thm:red_blue_formulas}
	For $1\leq j <n$, $1\leq k\leq n$ and $m\geq 0$, we have the \emph{blue formula}
	\begin{align} \label{blue_formula}
		h_j^\perp { }^qC_{n,k}^{(m)} & = \sum_{s=0}^{\min(j,k-1)}q^{\binom{j-s}{2}} \qbinom{m}{j-s}_q\qbinom{s+m}{s}_q \textcolor{blue}{q^s}\cdot { }^qC_{n-j,k-s}^{(s+m)} \\
		\notag	& +\sum_{s=0}^{\min(j,k-1)}q^{\binom{j-s}{2}} \qbinom{m}{j-s}_q\qbinom{s+m-1}{s-1}_q\sum_{r=0}^{s+m-1} q^r({ }^qC_{n-j,k-s}^{(r)}+ { }^qC_{n-j,k-s+1}^{(r)})\\
		\notag	& + \chi(j\geq k) q^{\binom{j-k}{2}} \qbinom{m}{j-k}_q \qbinom{k+m-1}{k-1}_q  \sum_{r=0}^{k+m-1} q^r\cdot { }^qC_{n-j,1}^{(r)},
	\end{align}
	and the \emph{red formula}
	\begin{align} \label{red_formula}
		h_j^\perp { }^qC_{n,k}^{(m)} & = \sum_{s=0}^{\min(j,k-1)}q^{\binom{j-s}{2}} \qbinom{m}{j-s}_q\qbinom{s+m}{s}_q \cdot { }^qC_{n-j,k-s}^{(s+m)} \\
		\notag	& +\sum_{s=0}^{\min(j,k-1)}q^{\binom{j-s}{2}} \qbinom{m}{j-s}_q\qbinom{s+m-1}{s-1}_q\textcolor{red}{q^{k-s}}\sum_{r=0}^{s+m-1} q^r({ }^qC_{n-j,k-s}^{(r)}+ { }^qC_{n-j,k-s+1}^{(r)}) \\
		& + \chi(j\geq k) q^{\binom{j-k}{2}} \qbinom{m}{j-k}_q \qbinom{k+m-1}{k-1}_q  \sum_{r=0}^{k+m-1} q^r\cdot { }^qC_{n-j,1}^{(r)}.
	\end{align}
	Moreover, replacing $q$ by $t$ everywhere these formulae still hold.
\end{theorem}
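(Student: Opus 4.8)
The clause ``replacing $q$ by $t$'' comes for free: since $\Delta_{h_m}\Delta'_{e_{k-1}}e_n$ is symmetric in $q$ and $t$, Remark~\ref{rem:qt_symmetry} says ${}^tC_{n,k}^{(m)}$ is ${}^qC_{n,k}^{(m)}$ after the substitution $q\mapsto t$, and this substitution commutes with the $\mathbb Q$-linear operator $h_j^\perp$; so it suffices to prove the two formulae for ${}^qC$. The plan is to realise ${}^qC_{n,k}^{(m)}$ inside the finite-dimensional space $V_n:=\mathrm{span}_{\mathbb Q(q)}\{h_a[X/(1-q)]\,e_{n-a}[X/(1-q)]:0\leq a\leq n\}$, on which $h_j^\perp$ is explicit. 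From $e_n=\sum_{\mu\vdash n}\frac{MB_\mu\Pi_\mu}{w_\mu}\widetilde H_\mu$ (equation~\eqref{eq:en_expansion}) and the definitions of the Delta operators, $\Delta_{h_m}\Delta'_{e_{k-1}}e_n=\sum_{\mu\vdash n}\frac{\widetilde H_\mu[X]}{w_\mu}\,MB_\mu\Pi_\mu\,h_m[B_\mu]\,e_{k-1}[B_\mu-1]$; specialising $t=0$ one checks $M=1-q$, $B_\mu\mapsto[\mu_1]_q$ and $MB_\mu\Pi_\mu\mapsto(q;q)_{\mu_1}$ (with $(q;q)_\ell=\prod_{r=1}^\ell(1-q^r)$, and $w_\mu|_{t=0}\ne 0$ so the term-by-term specialisation is legitimate), so every coefficient depends on $\mu$ only through $\ell:=\mu_1$. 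Using $h_m[[\ell]_q]=\qbinom{\ell+m-1}{m}_q$, $e_a[[\ell]_q]=q^{\binom a2}\qbinom\ell a_q$ (equations~\eqref{eq:h_q_binomial},~\eqref{eq:e_q_binomial}) and $(q;q)_\ell=\sum_i(-q)^ie_i[[\ell]_q]$, the coefficient $(q;q)_\ell\,h_m[[\ell]_q]\,e_{k-1}[[\ell]_q-1]$ is the value at $[\ell]_q$ of a fixed symmetric function, hence (as $e_a[[\ell]_q]=0$ for $a>\ell$ while $e_\ell[[\ell]_q]\ne 0$) has a unique expansion $\sum_{a\geq0}\gamma_a^{(k,m)}e_a[[\ell]_q]$ with explicit $\gamma_a^{(k,m)}\in\mathbb Q(q)$ independent of $\ell$. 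Feeding this into the $t=0$ instance of~\eqref{eq:e_h_expansion}, namely $h_a[X/(1-q)]e_{n-a}[X/(1-q)]=\sum_{\mu\vdash n}\frac{e_a[B_\mu]}{w_\mu}\widetilde H_\mu\big|_{t=0}$, gives ${}^qC_{n,k}^{(m)}=\sum_{a=0}^{n}\gamma_a^{(k,m)}\,h_a[X/(1-q)]\,e_{n-a}[X/(1-q)]$.

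Next I compute $h_j^\perp$ on $V_n$. Since $h_j^\perp$ is a skew derivation, $h_j^\perp(fg)=\sum_{b+c=j}(h_b^\perp f)(h_c^\perp g)$, and combining $\sum_{b\geq0}u^b(h_b^\perp g)[X]=g[X+u]$ with the addition formulas~\eqref{eq:e_h_sum_alphabets} and the evaluations $h_i[u/(1-q)]=u^i/(q;q)_i$, $e_i[u/(1-q)]=q^{\binom i2}u^i/(q;q)_i$ yields, for every $a$,
\[h_j^\perp\big(h_a[X/(1-q)]e_{n-a}[X/(1-q)]\big)=\frac1{(q;q)_j}\sum_{c=0}^{j}q^{\binom c2}\qbinom jc_q\,h_{a-j+c}[X/(1-q)]\,e_{n-a-c}[X/(1-q)],\]
negative indices giving zero. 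Thus $h_j^\perp$ maps $V_n$ into $V_{n-j}$; applying it to the formula above for ${}^qC_{n,k}^{(m)}$ and collecting the coefficient of each $h_b[X/(1-q)]e_{n-j-b}[X/(1-q)]$ presents $h_j^\perp {}^qC_{n,k}^{(m)}$ as an explicit $\mathbb Q(q)$-combination of these products.

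On the other side, each ${}^qC_{n-j,k'}^{(m')}$ appearing on the right-hand sides — for $(k',m')\in\{(k-s,s+m),(k-s,r),(k-s+1,r),(1,r)\}$ — equals $\sum_b\gamma_b^{(k',m')}h_b[X/(1-q)]e_{n-j-b}[X/(1-q)]$ by the first paragraph applied at size $n-j$. So matching the coefficient of each $h_b[X/(1-q)]e_{n-j-b}[X/(1-q)]$ turns the blue (resp. red) formula into a finite identity among $q$-binomial coefficients. These reduce to the core content of the crucial theorem~\eqref{crucial_thm} — namely the identity $e_m[[\ell]_q]e_i[[\ell]_q]=\sum_{s}q^{\binom{i-s}{2}}\qbinom m{i-s}_q\qbinom{s+m}{s}_q\,e_{s+m}[[\ell]_q]$, which is exactly what~\eqref{crucial_thm} encodes via the $t=0$ case of~\eqref{eq:e_h_expansion} — together with the (elementary) expansions of $h_m[[\ell]_q]$ and $(q;q)_\ell$ in the $e_a[[\ell]_q]$; the ``$-1$'' in $e_{k-1}[B_\mu-1]$ is absorbed through $e_{k-1}[B_\mu-1]=\sum_{i\geq k}(-1)^{i-k}e_i[B_\mu]$, and it is precisely this that splits the answer into the three lines of the formulae, the second carrying the coefficients $\qbinom{s+m-1}{s-1}_q$ and the inner sums $\sum_{r=0}^{s+m-1}q^r(\cdots)$, and the third — the boundary term $\chi(j\geq k)\,q^{\binom{j-k}{2}}\qbinom m{j-k}_q\qbinom{k+m-1}{k-1}_q\sum_{r=0}^{k+m-1}q^r\,{}^qC_{n-j,1}^{(r)}$ — collecting the contributions where the $e$-index would otherwise go negative, with~\eqref{eq:summation} and~\eqref{eq:sumBmu} used to re-sum the tail. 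Finally, the red and blue right-hand sides must agree (both equal $h_j^\perp {}^qC_{n,k}^{(m)}$), but for later use one also wants this directly: it amounts to the $q$-binomial identity that replacing $q^s$ by $q^{k-s}$ in the blue second sum is compensated by the remaining terms, which I would prove by induction on $j$ via~\eqref{eq:sumBmu}.

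The crux is the last paragraph: getting all these $q$-binomial identities to close up with exactly the stated coefficients — the boundary term being the most delicate, as it aggregates several ``overflow'' contributions at once — and establishing the red/blue equivalence directly, which is genuinely needed since the two forms are consumed differently in the combinatorial Sections~6--7 and so cannot simply be identified through ``both equal $h_j^\perp {}^qC_{n,k}^{(m)}$''. As sanity checks, at $q=0$ all $q$-binomials trivialise and the statement collapses to the recursion behind the $q=0$ Delta conjecture, while at $m=0$ it must reduce to the corresponding recursion for $\Delta'_{e_{k-1}}e_n|_{t=0}$.
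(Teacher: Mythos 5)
Your approach is a genuine reframing of the paper's proof, but it is not a complete proof: the hardest steps are stated as goals rather than carried out. Let me be specific.

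Your framework — observing that at $t=0$ the Macdonald coefficient $MB_\mu\Pi_\mu h_m[B_\mu]e_{k-1}[B_\mu-1]$ collapses to $(q;q)_{\mu_1}\,h_m[[\mu_1]_q]\,e_{k-1}[[\mu_1]_q-1]$, a function of $\ell=\mu_1$ alone, and then transporting everything to the spanning set $\{h_a[X/(1-q)]\,e_{n-a}[X/(1-q)]\}$ via the $t=0$ specialization of \eqref{eq:e_h_expansion} — is sound, and the explicit formula for $h_j^\perp$ on these products (via the coproduct and $h_i[u/(1-q)]=u^i/(q;q)_i$, $e_i[u/(1-q)]=q^{\binom i2}u^i/(q;q)_i$) is correct. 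I also checked that your telescoping expansion $e_{k-1}[B_\mu-1]=\sum_{i\geq k}(-1)^{i-k}e_i[B_\mu]$ is valid, thanks to the constraint $\sum_i(-1)^ie_i[B_\mu]=0$ coming from the $(1,1)$ cell of $\mu$. And you are right that \eqref{crucial_thm}, read at the level of Macdonald coefficients, encodes the $q$-binomial identity $e_m[[\ell]_q]e_i[[\ell]_q]=\sum_s q^{\binom{i-s}{2}}\qbinom m{i-s}_q\qbinom{s+m}{s}_q e_{s+m}[[\ell]_q]$. So you have correctly transported the problem into a purely $q$-binomial world, which is a different route from the paper: the paper instead rewrites ${}^qC_{n,k}^{(m)}$ as $\frac{[k]_t}{[n]_t}\Delta_{h_m}\Delta_{e_k}\omega(p_n)|_{q=0}$ and uses Haglund's Theorem~2.6 \eqref{eq:HaglundThm} to commute $h_j^\perp$ with the Macdonald coefficients before ever specializing.

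However, the proof is not done, and you say so yourself (``the crux is the last paragraph''). Two gaps are substantive. First, the emergence of the three-line structure in \eqref{blue_formula} and \eqref{red_formula} is the whole point of the theorem, and you do not derive it. In the paper this split comes naturally from three identifiable moves: the $s=k$ term of the sum coming from \eqref{crucial_thm} produces the $\chi(j\geq k)$ boundary line; the decomposition $[k]_t = t^s[k-s]_t+[s]_t$ (versus $[k]_t=t^{k-s}[s]_t+[k-s]_t$) produces the blue versus red variants; and Lemma~\eqref{eq:lemmetto} combined with \eqref{eq:deltaprime} produces the inner sums $\sum_{r=0}^{s+m-1}q^r({}^qC_{n-j,k-s}^{(r)}+{}^qC_{n-j,k-s+1}^{(r)})$. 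Your sketch replaces all of this by ``the boundary term ... aggregates several overflow contributions,'' which names the difficulty without resolving it. Second, your claim that the red/blue equivalence would be proved ``by induction on $j$ via \eqref{eq:sumBmu}'' is a guess pointing at the wrong tool: \eqref{eq:sumBmu} is a Pieri-coefficient identity at the Macdonald level and plays no role once you have reduced to $q$-binomials; in the paper the two formulae are \emph{not} separately established and then shown equal — they arise simultaneously from the two decompositions of $[k]_t$, so there is nothing extra to prove. Similarly \eqref{eq:summation} is not used in the paper's proof of this theorem and it is unclear how it would ``re-sum the tail'' in your setting. Until the $q$-binomial algebra that produces the three stated lines (with the exact $\qbinom{s+m-1}{s-1}_q$ and $\chi(j\geq k)$ factors) is actually written out, this remains a plan rather than a proof.
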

\begin{proof}
	The last statement follows immediately from Remark~\ref{rem:qt_symmetry}.
	
	We start with a remark.	
	\begin{remark}
		Observe that
		\begin{equation}
			{ }^tC_{n,k}^{(m)}= \left.\Delta_{h_{m}}\Delta_{e_{k-1}}'e_n\right|_{q=0}=\frac{[k]_t}{[n]_t}\left.\Delta_{h_{m}}\Delta_{e_{k}}\omega(p_n)\right|_{q=0} .
		\end{equation}
		
		The argument to prove this is the same as the one appearing in the proof of Theorem~5.1 in \cite{DAdderio-Iraci-VandenWyngaerd-Delta-Square} in the case $m=0$.
	\end{remark}

	For every $j$ such that $1\leq j<n$, using \eqref{eq:p_expansion}, we have
	
	\begin{align*}
		h_j^\perp \Delta_{h_{m}}\Delta_{e_{k}}\omega(p_n) & = [n]_q [n]_t \sum_{\mu\vdash n}   M\frac{\Pi_\mu}{w_{\mu}} h_{m}[B_\mu]e_{k}[B_{\mu}]h_j^\perp\widetilde{H}_\mu[X] \\
		\text{(using \eqref{eq:def_cmunu})}	& = [n]_q [n]_t \sum_{\mu\vdash n}   M\frac{\Pi_\mu}{w_{\mu}}  h_{m}[B_\mu]e_{k}[B_{\mu}]\sum_{\nu\subset_{j} \mu} c_{\mu\nu}^{(j)}\widetilde{H}_\nu[X]\\
		\text{(using \eqref{eq:rel_cmunu_dmunu})}& = [n]_q [n]_t \sum_{\nu\vdash n-j}M\frac{\widetilde{H}_\nu[X]}{w_{\nu}} \sum_{\mu\supset_j \nu}  \Pi_\mu   h_{m}[B_\mu]e_{k}[B_{\mu}] d_{\mu\nu}^{(j)}\\
		& = [n]_q [n]_t \sum_{\nu\vdash n-j}M\frac{\widetilde{H}_\nu[X]}{w_{\nu}} \Pi_\nu\left.(\Delta_{e_j} h_m[X/M]e_k[X/M])\right|_{X=MB_\nu}\\
		\text{(using \eqref{eq:e_h_expansion})}& = [n]_q [n]_t \sum_{\nu\vdash n-j}M\frac{\widetilde{H}_\nu[X]}{w_{\nu}} \Pi_\nu \sum_{\gamma\vdash k+m} e_j[B_\gamma]e_m[B_\gamma] \frac{\widetilde{H}_\gamma[MB_\nu]}{w_\gamma},
	\end{align*}
	where in the fourth equality we used \eqref{eq:HaglundThm} with $A[X]=e_j[X/M]$ and $F[X]= h_m[X/M]e_k[X/M]$.

	Specializing at $q=0$, we get
	\begin{align} \label{eq:intermediate_q0}
		\notag	& \hspace{-1cm} h_j^\perp \frac{[k]_t}{[n]_t}\left.\Delta_{h_{m}}\Delta_{e_{k}}\omega(p_n)\right|_{q=0}=\\
		\notag	 & = [k]_t \sum_{\nu\vdash n-j}M\frac{\widetilde{H}_\nu[X]}{w_{\nu}} \left. \Pi_\nu \sum_{\gamma\vdash k+m} e_j[B_\gamma]e_m[B_\gamma] \frac{\widetilde{H}_\gamma[MB_\nu]}{w_\gamma} \right|_{q=0} \\
		\text{(using \eqref{crucial_thm})}& =  \sum_{s=0}^{j}[k]_t t^{\binom{j-s}{2}} \qbinom{m}{j-s}_t \qbinom{s+m}{s}_t \sum_{\nu\vdash n-j}M\frac{\widetilde{H}_\nu[X]}{w_{\nu}}  \left. \Pi_\nu h_{s+m}\left[B_\nu\right] e_{k-s}\left[B_\nu\right] \right|_{q=0}.
	\end{align}
	
	In the outer sum of \eqref{eq:intermediate_q0}, if $s=k$, we get
	\begin{align*}
		& \hspace{-1cm} [k]_t  t^{\binom{j-k}{2}} \qbinom{m}{j-k}_t \qbinom{k+m}{k}_t \sum_{\nu\vdash n-j}M\frac{\widetilde{H}_\nu[X]}{w_{\nu}}  \left. \Pi_\nu h_{k+m}\left[B_\nu\right]  \right|_{q=0}= \\
		& =  t^{\binom{j-k}{2}} \qbinom{m}{j-k}_t \qbinom{k+m-1}{k-1}_t  \sum_{\nu\vdash n-j}M\frac{\widetilde{H}_\nu[X]}{w_{\nu}}  \left. \Pi_\nu [k+m]_th_{k+m}\left[B_\nu\right]  \right|_{q=0}\\
		& =  t^{\binom{j-k}{2}} \qbinom{m}{j-k}_t \qbinom{k+m-1}{k-1}_t  \sum_{\nu\vdash n-j}M\frac{\widetilde{H}_\nu[X]}{w_{\nu}} \Pi_\nu \sum_{r=0}^{k+m-1}\left.  t^rB_\nu  h_r[B_\nu]\right|_{q=0}\\
		\text{(using \eqref{eq:en_expansion})}& =  t^{\binom{j-k}{2}} \qbinom{m}{j-k}_t \qbinom{k+m-1}{k-1}_t  \sum_{r=0}^{k+m-1} t^r \left. \Delta_{h_{r}}e_{n-j}\right|_{q=0} \\
		& =  t^{\binom{j-k}{2}} \qbinom{m}{j-k}_t \qbinom{k+m-1}{k-1}_t  \sum_{r=0}^{k+m-1} t^r C_{n-j,1}^{(r)} ,
	\end{align*}
	where in the second equality we used the following lemma:
	\begin{lemma}
		For $a\geq 1$ and any nonempty partition $\nu$, we have
		\begin{equation} \label{eq:lemmetto}
			\left.[a]_t h_a[B_\nu]\right|_{q=0}= \sum_{r=0}^{a-1}\left.  t^rB_\nu  h_r[B_\nu]\right|_{q=0} .
		\end{equation}
	\end{lemma}
	\begin{proof}[Proof of the Lemma]
		Using \eqref{eq:h_q_binomial}, we have
		\begin{align} 
			\left. [a]_t h_a[B_\nu]\right|_{q=0} & = [a]_t\qbinom{\ell(\nu)+a-1}{a}_t\\
			\notag	& =  [\ell(\nu)+a-1]_t\qbinom{\ell(\nu)+(a-1)-1}{a-1}_t\\
			\notag	& =  t^{a-1}[\ell(\nu)]_t\qbinom{\ell(\nu)+(a-1)-1}{a-1}_t + [a-1]_t\qbinom{\ell(\nu)+(a-1)-1}{a-1}_t\\
			\notag	\text{(by induction)}& =\sum_{r=0}^{a-1}t^r[\ell(\nu)]_t\qbinom{\ell(\nu)+r-1}{r}_t\\
			\notag \text{(using \eqref{eq:h_q_binomial})}	& = \sum_{r=0}^{a-1}\left.  t^rB_\nu  h_r[B_\nu]\right|_{q=0} .
		\end{align}
	\end{proof}
	On the other hand, again in the outer sum of \eqref{eq:intermediate_q0}, if $s<k$, then the corresponding internal sum gives
	\begin{align*}
		\sum_{\nu\vdash n-j}M\frac{\widetilde{H}_\nu[X]}{w_{\nu}}  \left. \Pi_\nu h_{s+m}\left[B_\nu\right] e_{k-s}\left[B_\nu\right] \right|_{q=0} & =\frac{1}{[k-s]_t}\left.\Delta_{h_{s+m}}\Delta_{e_{k-s}}\omega(p_{n-j})\right|_{q=0}	,
	\end{align*}
	so that
	\begin{align*}
		h_j^\perp { }^t C_{n,k}^{(m)} & = \sum_{s=0}^{\min(j,k-1)}t^{\binom{j-s}{2}} \qbinom{m}{j-s}_t\qbinom{s+m}{s}_t \frac{[k]_t}{[k-s]_t} { }^tC_{n-j,k-s}^{(s+m)} \\
		& + \chi(j\geq k) t^{\binom{j-k}{2}} \qbinom{m}{j-k}_t \qbinom{k+m-1}{k-1}_t  \sum_{r=0}^{k+m-1} t^r \cdot { }^tC_{n-j,1}^{(r)}.
	\end{align*}
	The first sum can be developed in two ways: either using $[k]_t=t^{s}[k-s]_t+[s]_t$ or using $[k]_t=t^{k-s}[s]_t+[k-s]_t$. Using the first formula for $[k]_t$ we get
	\begin{align*}
		h_j^\perp  { }^tC_{n,k}^{(m)} & = \sum_{s=0}^{\min(j,k-1)}t^{\binom{j-s}{2}} \qbinom{m}{j-s}_t\qbinom{s+m}{s}_t \textcolor{blue}{t^s}\cdot { }^tC_{n-j,k-s}^{(s+m)} \\
		& +\sum_{s=0}^{\min(j,k-1)}t^{\binom{j-s}{2}} \qbinom{m}{j-s}_t\qbinom{s+m}{s}_t \frac{[s]_t}{[k-s]_t}{ }^tC_{n-j,k-s}^{(s+m)}\\
		& + \chi(j\geq k) t^{\binom{j-k}{2}} \qbinom{m}{j-k}_t \qbinom{k+m-1}{k-1}_t  \sum_{r=0}^{k+m-1} t^r \cdot { }^tC_{n-j,1}^{(r)}.
	\end{align*}
	Now
	\begin{align*}
		\qbinom{s+m}{s}_t \frac{[s]_t}{[k-s]_t} { }^tC_{n-j,k-s}^{(s+m)} & =\qbinom{s+m-1}{s-1}_t   \sum_{\nu\vdash n-j}M\frac{\widetilde{H}_\nu[X]}{w_{\nu}}  \left. \Pi_\nu [s+m]_t h_{s+m}\left[B_\nu\right] e_{k-s}\left[B_\nu\right] \right|_{q=0}\\
		\text{(using \eqref{eq:lemmetto})}& =\qbinom{s+m-1}{s-1}_t  \sum_{\nu\vdash n-j}M\frac{\widetilde{H}_\nu[X]}{w_{\nu}}  \left. \Pi_\nu \sum_{r=0}^{s+m-1} t^rB_\nu  h_r[B_\nu] e_{k-s}\left[B_\nu\right] \right|_{q=0}\\
		\text{(using \eqref{eq:en_expansion})}& =\qbinom{s+m-1}{s-1}_t  \left.  \sum_{r=0}^{s+m-1} t^r \Delta_{h_r}\Delta_{e_{k-s}}e_{n-j} \right|_{q=0}\\
		\text{(using \eqref{eq:deltaprime})}& =\qbinom{s+m-1}{s-1}_t  \left.  \sum_{r=0}^{s+m-1} t^r (\Delta_{h_r}\Delta_{e_{k-s-1}}'e_{n-j}+\Delta_{h_r}\Delta_{e_{k-s}}'e_{n-j} )\right|_{q=0}\\
		& =\qbinom{s+m-1}{s-1}_t \sum_{r=0}^{s+m-1} t^r ({ }^tC_{n-j,k-s}^{(r)}+{ }^tC_{n-j,k-s+1}^{(r)}),
	\end{align*}
	which gives precisely the blue formula \eqref{blue_formula} with $q$ replaced by $t$, i.e.
	\begin{align}
		h_j^\perp { }^tC_{n,k}^{(m)} & = \sum_{s=0}^{\min(j,k-1)}t^{\binom{j-s}{2}} \qbinom{m}{j-s}_t\qbinom{s+m}{s}_t \textcolor{blue}{t^s}\cdot { }^tC_{n-j,k-s}^{(s+m)} \\
		\notag	& +\sum_{s=0}^{\min(j,k-1)}t^{\binom{j-s}{2}} \qbinom{m}{j-s}_t\qbinom{s+m-1}{s-1}_t\sum_{r=0}^{s+m-1} t^r({ }^tC_{n-j,k-s}^{(r)}+ { }^tC_{n-j,k-s+1}^{(r)})\\
		\notag	& + \chi(j\geq k) t^{\binom{j-k}{2}} \qbinom{m}{j-k}_t \qbinom{k+m-1}{k-1}_t  \sum_{r=0}^{k+m-1} t^r \cdot { }^tC_{n-j,1}^{(r)}.
	\end{align}
	The proof of the red formula \eqref{red_formula} with $q$ replaced by $t$ is analogous, except that it uses the identity $[k]_t=t^{k-s}[s]_t+[k-s]_t$ instead.
	
	Replacing $t$ by $q$ everywhere, and using Remark~\ref{rem:qt_symmetry}, gives the blue formula \eqref{blue_formula} and the red formula \eqref{red_formula}, completing the proof of our theorem.
\end{proof}

\subsection{A useful lemma}

The following lemma will be useful.

\begin{lemma} \label{lem:j=n}
	For $1\leq k\leq n$ and $m\geq 0$ we have
	\begin{equation}
		h_n^\perp { }^qC_{n,k}^{(m)}=q^{\binom{n-k}{2}} \qbinom{m}{n-k}_q \qbinom{m+k-1}{k-1}_q.
	\end{equation}
\end{lemma}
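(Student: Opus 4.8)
The plan is to evaluate $h_n^\perp\, {}^qC_{n,k}^{(m)}$ directly by specializing the recursion already established in Theorem~\ref{thm:red_blue_formulas} to the extreme case $j = n$. Strictly speaking the blue and red formulas are stated for $1 \le j < n$, but the computation that produced them — starting from $h_j^\perp \Delta_{h_m}\Delta_{e_k}\omega(p_n)$, applying \eqref{eq:p_expansion}, \eqref{eq:def_cmunu}, \eqref{eq:rel_cmunu_dmunu}, \eqref{eq:HaglundThm} and then \eqref{crucial_thm} — makes sense verbatim when $j = n$, except that the ambient partition $\nu$ now ranges over partitions of $0$, i.e. $\nu = \emptyset$. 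So first I would redo that chain of equalities with $j = n$, keeping careful track of what happens when $\nu = \emptyset$: there $B_\emptyset = 0$, $\Pi_\emptyset = 1$, $w_\emptyset = 1$ and $\widetilde H_\emptyset = 1$, and the surviving terms are exactly those in which the symmetric function being evaluated at $B_\nu = 0$ is a constant.

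Concretely, from the analogue of \eqref{eq:intermediate_q0} with $j = n$ we obtain
\[
h_n^\perp\, \frac{[k]_t}{[n]_t}\left.\Delta_{h_m}\Delta_{e_k}\omega(p_n)\right|_{q=0}
= \sum_{s=0}^{n} [k]_t\, t^{\binom{n-s}{2}}\qbinom{m}{n-s}_t\qbinom{s+m}{s}_t\,
\left. \Pi_\emptyset\, h_{s+m}[B_\emptyset]\, e_{k-s}[B_\emptyset]\right|_{q=0},
\]
and $h_{s+m}[0]\,e_{k-s}[0]$ vanishes unless both $s + m = 0$ and $k - s = 0$, or — using the convention $h_0 = e_0 = 1$ — unless $s = k$ (so that $e_{k-s}[0] = e_0[0] = 1$) and simultaneously $s + m = 0$, which forces $m = 0$ and $s = k$; more carefully, $h_{s+m}[0] = \chi(s+m=0)$ and $e_{k-s}[0] = \chi(s=k)$, so the only contribution comes from $s = k$, and it requires $k + m = 0$. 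That looks too restrictive, so the honest approach is to not pass $e_{k-s}[B_\nu]$ through the $t = 0$ specialization prematurely: instead I would go back one step and use that the internal sum $\sum_{\nu}M\widetilde H_\nu/w_\nu\,\Pi_\nu h_{s+m}[B_\nu]e_{k-s}[B_\nu]$ equals $\frac{1}{[k-s]_t}\left.\Delta_{h_{s+m}}\Delta_{e_{k-s}}\omega(p_{n-j})\right|_{q=0}$ for $s < k$ and the telescoped expression for $s = k$, exactly as in the proof of Theorem~\ref{thm:red_blue_formulas}; with $j = n$ and $n - j = 0$, both $\omega(p_0) = 0$ and every ${}^qC_{0,\ell}^{(r)} = 0$ except the "empty" term corresponding to acting on $e_0 = 1$.

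Thus the only surviving term of the recursion is the $s = k$ piece (the $\chi(j \ge k)$ term, which here reads $\chi(n \ge k)$ and is always true), giving
\[
h_n^\perp\, {}^tC_{n,k}^{(m)} = t^{\binom{n-k}{2}}\qbinom{m}{n-k}_t\qbinom{k+m-1}{k-1}_t\sum_{r=0}^{k+m-1} t^r\cdot {}^tC_{0,1}^{(r)},
\]
and ${}^tC_{0,1}^{(r)} = \left.\Delta_{h_r}\Delta_{e_0}'e_0\right|_{q=0} = \Delta_{h_r}(1) = h_r[0] = \chi(r = 0) = \delta_{r,0}$, so the inner sum collapses to $1$. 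This yields $h_n^\perp\, {}^tC_{n,k}^{(m)} = t^{\binom{n-k}{2}}\qbinom{m}{n-k}_t\qbinom{m+k-1}{k-1}_t$, and replacing $t$ by $q$ via Remark~\ref{rem:qt_symmetry} gives the claim. The main obstacle is purely bookkeeping: justifying that the derivation of the red/blue formulas remains valid at the boundary $j = n$, and correctly identifying which terms survive when the ambient partition $\nu$ degenerates to $\emptyset$ — in particular making sure no spurious contributions appear from the $s < k$ summands (they vanish because $\Delta_{h_{s+m}}\Delta_{e_{k-s}}\omega(p_0) = 0$) and that the $s = k$ summand is handled through the telescoping Lemma~\eqref{eq:lemmetto} rather than through a naive substitution $B_\emptyset = 0$.
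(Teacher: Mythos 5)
Your proposal takes a genuinely different route from the paper: the paper simply observes that $h_n^\perp\Delta_{h_m}\Delta'_{e_{k-1}}e_n = \langle\Delta_{h_m}\Delta'_{e_{k-1}}e_n, h_n\rangle$ is the Schr\"oder case $\langle\cdot, e_{n-d}h_d\rangle$ (with $d=n$) already proved in \cite{DAdderio-Iraci-VandenWyngaerd-GenDeltaSchroeder}*{Theorem~4.7}, and specializes that explicit formula at $t=0$. You instead try to push the derivation of the blue formula to the boundary $j=n$, which would make Lemma~\ref{lem:j=n} a consequence of Theorem~\ref{thm:red_blue_formulas} rather than an external input.

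Unfortunately, the key claim — that the chain of equalities producing the blue formula ``makes sense verbatim when $j=n$'' — is false, and this is a real gap. The failure occurs at the application of \eqref{eq:HaglundThm} with $\nu = \emptyset$ (that identity rests on Macdonald reciprocity \eqref{eq:Macdonald_reciprocity}, which is stated for \emph{nonempty} partitions). Take $n=k=1$, $m=0$, $j=1$ as a sanity check. The quantity in question is $h_1^\perp\Delta_{e_1}\omega(p_1) = h_1^\perp p_1 = 1$. But the paper's intermediate formula
\[
h_j^\perp\Delta_{h_m}\Delta_{e_k}\omega(p_n) = [n]_q[n]_t\sum_{\nu\vdash n-j}M\frac{\widetilde H_\nu[X]}{w_\nu}\Pi_\nu\left.\bigl(\Delta_{e_j}h_m[X/M]e_k[X/M]\bigr)\right|_{X=MB_\nu}
\]
evaluates, at $j=n$ (so $\nu=\emptyset$, $B_\emptyset=0$), to $[n]_q[n]_t\,M\cdot\bigl(\Delta_{e_n}h_m[X/M]e_k[X/M]\bigr)[0]=0$, because one is evaluating a symmetric function homogeneous of degree $k+m\ge 1$ at the empty alphabet. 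In the same example, the two sides of \eqref{eq:HaglundThm} with $A=F=e_1[X/M]$ and $\nu=\emptyset$ are $\Pi_{(1)}e_1[B_{(1)}]d_{(1),\emptyset}^{(1)}=1/M$ on the left and $\Pi_\emptyset(\Delta_{e_1}e_1[X/M])[MB_\emptyset]=(p_1/M)[0]=0$ on the right — they disagree. You noticed the symptom (``$h_{s+m}[0]\,e_{k-s}[0]$ vanishes\ldots that looks too restrictive'') but misdiagnosed it as a premature specialization; in fact the chain is already broken one step earlier. The auxiliary identities you then lean on also degenerate at $\nu=\emptyset$: the Lemma giving \eqref{eq:lemmetto} is explicitly stated for \emph{nonempty} $\nu$ (both sides are $0$ at $\nu=\emptyset$, so it cannot produce the nonzero target), and \eqref{eq:en_expansion} fails at $n=0$ since it would read $e_0 = MB_\emptyset\Pi_\emptyset\widetilde H_\emptyset/w_\emptyset = 0 \ne 1$, so the telescoping into $\sum_r t^r\Delta_{h_r}e_{n-j}$ is not available when $n-j=0$. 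Your final expression happens to be the correct one, and it may well be that the blue formula remains valid at $j=n$ under suitable conventions for $C_{0,\ell}^{(r)}$, but that boundary case would need an independent justification; as written, your argument does not supply one.
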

\begin{proof}
	In \cite{DAdderio-Iraci-VandenWyngaerd-GenDeltaSchroeder}*{Theorem~4.7} we proved the case $\<\cdot , e_{n-d}h_d\>$ of the generalized Delta conjecture. So, in particular, we have the case $\<\Delta_{h_{m}} \Delta'_{e_{k-1}} e_{n} , h_n\> = h_n^\perp \Delta_{h_{m}} \Delta'_{e_{k-1}} e_{n}$. Specializing to $t=0$ we get that $h_n^\perp { }^qC_{n,k}^{(m)}$ is the sum of $q^{\dinv(D)}$ over all $D\in \PLD(m,n)^{\ast n-k}$ of area $0$ whose dinv reading word is $1\, 2\,\cdots\,n$. Evaluating at $t=0$ the formula for $\<\Delta_{h_{m}} \Delta'_{e_{k-1}} e_{n} , h_n\>$ given in \cite{DAdderio-Iraci-VandenWyngaerd-GenDeltaSchroeder}*{Theorem~4.7} yields the desired polynomial.
\end{proof}

\section{Ordered set partitions}

The following definitions are analogous to the definitions in \cite{Wilson-Equidistribution}, except that $0$ is added to the list of elements. 

\begin{definition}
	Let $m\in \mathbb N$, and let $\alpha=(\alpha_1,\alpha_2,\dots, \alpha_l )\in \mathbb N^l$ be a weak composition of $n\in \mathbb N$.  An \emph{ordered multiset partition} of \emph{type} $(m,\alpha)$ is a ordered partition of the multiset $M \coloneqq \{0^m\}\cup \{ i^{\alpha_i} \mid 1\leq i\leq l \}$ into sets called \emph{blocks}. So even though an element might appear multiple times, it appears at most once in each block. The set of such ordered multiset partitions of type $(m,\alpha)$ with $m+k$ blocks is denoted by $\OP(m,\alpha)^k$, and we set $\OP(m,n)^k:=\cup_\alpha \OP(m,\alpha)^k$ where $\alpha$ runs over all weak compositions of $n$. 
\end{definition}

We will represent elements of $\OP(m,n)^k$ in the following way.

\begin{example}
	Let $m=2$, $n=6$ and $k=4$. To represent an element of $\OP(2,6)^2$, we separate the blocks with a $\mid$, for example \[ 10\mid 1\mid  320 \mid  21,  \] where by convention we choose to always write elements in the same block in strictly decreasing order. The type of this ordered multiset partition is $(3,2,1)$. 
	
	Another useful way to represent an element of $\OP(2,6)^2$ is to indicate that elements belong to the same block by adding a $\ast$ between them: e.g. the previous example gets written as \[1_\ast 0 \; 1\; 3_\ast2_\ast 0 \, 2_\ast 1. \]
\end{example}

\begin{definition}
	Given $\pi\in \OP(m,n)^k $, we define $\inv(\pi)$ as the number of pairs $(i,j)$ such that 
	\begin{enumerate}[(i)]
		\item $i$ appears in $\pi$ to the left of $j$;
		\item $i$ and $j$ are in distinct blocks;
		\item $j<i$;
		\item $j$ is minimal in its block. 
	\end{enumerate}
\end{definition}

\begin{example}
	The inv of the ordered multiset partition $10 \mid 1 \mid 320 \mid 21$ is 4. Indeed, reading the elements from left to right, the second $0$ creates inv with the first two $1$'s and the last $1$ creates inv with the $3$ and the first $2$.  
\end{example}

\begin{definition}
	Given $\pi \in \OP(m,n)^k$, a \emph{primary diagonal inversion} of $\pi$ is a pair $(i,j)$ such that
	\begin{enumerate}[(i)]
		\item $i$ appears in $\pi$ to the left of $j$;
		\item $i$ and $j$ are in distinct blocks;
		\item $j<i$;
		\item if $i$ is the $h$-th smallest element in its block, then so is $j$.
	\end{enumerate}  
	A \emph{secondary diagonal inversion} is a pair $(i,j)$ such that 
	\begin{enumerate}[(i)]
		\item $i$ appears in $\pi$ to the left of $j$;
		\item $i$ and $j$ are in distinct blocks;
		\item $i<j$;
		\item if $i$ is the $h$-th smallest element in its block, then $j$ is the $(h+1)$-th smallest element in its block. 
	\end{enumerate} We define $\dinv(\pi)$ to be the number of diagonal inversions of $\pi$ (primary or secondary). 
\end{definition}
\begin{example}
	The dinv of the ordered multiset partition $10 \mid 1 \mid 320 \mid 21$ is 7. Indeed, reading the elements from left to right, the second $0$ creates primary dinv with the second $1$, the two $2$ create secondary dinv with the minimal elements of the blocks to their left ($2+3$), and the $3$ creates secondary dinv with the first $1$.
\end{example}

\begin{definition}
	Let $\sigma=\sigma_1\cdots \sigma_n $ be a word of integers. A \emph{descent} of $\sigma$ is an $i\in \{1,2,\dots, n-1\}$ such that $\sigma_{i}>\sigma_{i+1}$. The set of descents of $\sigma$ is denoted by $\text{Des}(\sigma)$.
\end{definition}

\begin{definition}
	Given $\pi \in \OP(m,n)^k$, consider the word $\sigma=\sigma_1\sigma_2\cdots\sigma_{m+n}$ obtained from $\pi$ by writing the blocks one after the other, from left to right, with the elements of each block in decreasing order. We define another word $w$, recursively. Set $w_0=0$ and $w_i=w_{i-1}+\chi(\sigma_i\text{\, is minimal in its block})$, where $\chi$ is the function defined as $\chi(\mathcal{P})=1$ if the statement $\mathcal{P}$ is true, and $\chi(\mathcal{P})=0$ otherwise. We define the \emph{major index} of $\pi$ as \[\maj(\pi) \coloneqq \sum_{i \in \text{Des}(\sigma)} w_i. \]  
\end{definition}
\begin{example}
	The maj of the ordered multiset partition $10 \mid 1 \mid 320 \mid 21$ is 8. Indeed, in this case $\sigma=1\, 0\, 1\, 3\, 2\, 0\, 2\, 1$, $\text{Des}(\sigma)=\{1,4,5,7\}$, and the corresponding $w$ word is $0\, 0\, 1\, 2\, 2\, 2\, 3\, 3\, 4$.
\end{example}

\begin{definition}
	Let $\pi\in \OP(m,n)^k$ and $(m,\alpha)$ its type, where $\alpha=(\alpha_0,..., \alpha_l)$. We define the monomial \[x^\pi \coloneqq x_1^{\alpha_1} \cdots x_l^{\alpha_l}.\]  
\end{definition}

\begin{definition}
	We define two subsets of $\OP(m,n)^k$, denoted  $\OP^L(m,n)^k$ and $\OP^R(m,n)^k $ that consist of the ordered multiset partitions that do not contain a zero in the leftmost and rightmost block, respectively. 
\end{definition}

\begin{proposition} \label{prop:xi_map_OPR_dinv}
	There exists a bijection \[\xi \colon S \subseteq \PLD(m,n)^{\ast n-k} \rightarrow \OP^R(m,n)^k  \] where $S$ is the subset of all partially labelled decorated Dyck paths of area $0$ that touch the diagonal $m+k+1$ times, and it preserves the dinv statistic.
\end{proposition}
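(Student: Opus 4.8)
The plan is to first nail down the combinatorial shape of the paths in $S$, then write $\xi$ and its inverse explicitly, and finally match $\dinv$ pair by pair.

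\textbf{Describing $S$.} Since $\area(D)=\sum_{i\notin\DRise(D)}a_i(D)=0$ and every $a_i(D)\geq 0$, each $i\notin\DRise(D)$ has $a_i(D)=0$. Conversely, if $a_i(D)>0$ then $i$ must be a rise (otherwise the positive letter $a_i(D)$ would contribute to the area), and in fact $i\in\DRise(D)$; and no rise has area-word letter $0$. So the decoration set is \emph{forced}: $\DRise(D)=\{i:a_i(D)>0\}$, and the area word of $D$ is a concatenation of ascending runs $0,1,2,\dots,r$. Two consecutive letters of such a run satisfy $a_i(D)=a_{i-1}(D)+1$, so row $i$ is a rise and rows $i-1,i$ lie in one column; at the junction of two runs the letter drops from $r$ to $0$, which forces $r+1$ east steps and hence a new column. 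Thus the columns of $D$ are exactly its runs; their number equals the number of $0$'s in the area word, which is $(\#\text{diagonal contacts})-1=m+k$; hence $D$ has $m+k$ columns, the number of decorated rises is $(m+n)-(m+k)=n-k$ as required, the labels in each column strictly increase from bottom to top, and (by the last clause of the definition of $\PLD$) no $0$ occurs in the leftmost column. Conversely, any labelled configuration of this type lies in $S$.

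\textbf{The bijection.} List the columns of $D\in S$ from bottom to top as $C_1,\dots,C_{m+k}$, and let $\xi(D)$ be the ordered multiset partition whose blocks, read left to right, are the label sets of $C_{m+k},C_{m+k-1},\dots,C_1$. Each block is a genuine set (labels in a column are distinct) whose $h$-th smallest element is the label of the $h$-th step from the bottom of the corresponding column; the $m$ zeros, being minima of $m$ distinct columns other than the leftmost, become minima of $m$ distinct blocks other than the rightmost, so $\xi(D)\in\OP^R(m,n)^k$ (and moreover $x^{\xi(D)}=x^D$). The inverse turns the blocks of $\pi\in\OP^R(m,n)^k$, read right to left, into columns — the elements of each block listed in increasing order from bottom to top — stacks them, and inserts the uniquely determined east steps and decorations dictated by the run structure above; the condition that the rightmost block contains no $0$ guarantees the result lies in $\PLD(m,n)^{\ast n-k}$ with $m+k$ diagonal contacts. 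These two maps are mutually inverse.

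\textbf{Matching $\dinv$, and the main difficulty.} Write $(c,h)$ for the $h$-th step from the bottom of column $C_c$; its area-word letter is $h-1$, and for $c<c'$ every row of $C_c$ lies below every row of $C_{c'}$, so in a pair consisting of one step from $C_c$ and one from $C_{c'}$ (with $c<c'$) the $C_c$-step carries the smaller row index. No pair inside a single column is an inversion (the letters increase by exactly $1$ and the labels are distinct, which kills both inversion types). For $c<c'$, the pair $\{(c,h),(c',h')\}$ is a primary inversion iff $h=h'$ and the label of $(c,h)$ is smaller than that of $(c',h')$; under $\xi$ the column $C_{c'}$ maps to an earlier block than $C_c$, the two labels are the $h$-th smallest elements of those two blocks, and the earlier one is the larger — precisely a primary diagonal inversion of $\xi(D)$. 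Similarly the pair is a secondary inversion iff $h=h'+1$ and the label of $(c,h)$ exceeds that of $(c',h')$; here the latter is the $h'$-th smallest element of the earlier block (image of $C_{c'}$), the former the $(h'+1)$-th smallest element of the later block (image of $C_c$), and the earlier one is the smaller — precisely a secondary diagonal inversion of $\xi(D)$. Summing over all pairs of columns gives $\dinv(D)=\dinv(\xi(D))$. The genuinely delicate points are the rigidity in the first step — that $\area(D)=0$ together with the contact count leaves no freedom in the decoration set and collapses a path in $S$ to a "composition with column-labels" — and the choice of block order: reversing the columns is exactly what makes the height shift by one in a \emph{secondary} path inversion correspond to the "$(h{+}1)$-th smallest element" clause in the definition of a secondary diagonal inversion, and it is worth checking this on a small example (e.g. $m=0$, $n=k=2$).
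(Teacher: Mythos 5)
Your proof is correct and follows the same approach as the paper's: the area-$0$ condition rigidifies the decoration set and the path shape, the maximal vertical runs become blocks read from right to left, and the rightmost-block-has-no-zero condition comes from the first-column restriction in the definition of $\PLD(m,n)$. The paper dispatches the $\dinv$ verification with "it is immediate to check"; your explicit pair-by-pair matching of primary and secondary inversions, including the observation that within-column pairs contribute nothing and that the column reversal is exactly what aligns the height-shift in secondary path inversions with the "$(h{+}1)$-th smallest" clause, is the content the paper leaves implicit.
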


\begin{proof}
	A partially labelled decorated Dyck path has area $0$ if and only if all its rises are decorated and the only vertical steps that are not rises start from the diagonal. This implies that elements of area $0$ in  $\PLD(m,n)^{\ast n-k}$ touch the diagonal $m+k+1$ times and a stretch of vertical steps is always followed by the same number of horizontal steps. Thus, taking the ``blocks'' of labels that label the different vertical stretches, from right to left, we obtain an ordered multiset partition that completely determines the path. Notice that the rightmost block does not contain a $0$ since the first label of a partially labelled Dyck path is never $0$. For example the path in Figure~\ref{fig: pld area 0} corresponds under $\xi$ to the ordered multiset partition $0 \mid 50 \mid 410 \mid 32$ in $\OP^R(3,5)^1$.
	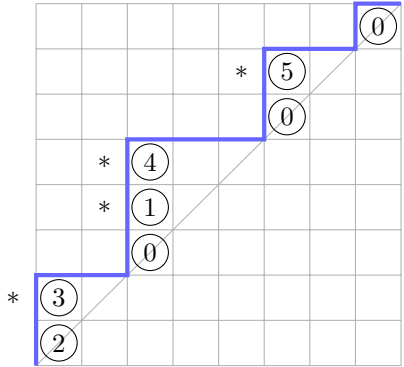
\begin{figure}[h]
		\centering
		\begin{tikzpicture}[scale=.6]
		\draw[step=1.0, gray!60, thin] (0,0) grid (8,8);
		
		\draw[gray!60, thin] (0,0) -- (8,8);
		
		\draw[blue!60, line width=1.6pt] (0,0) -- (0,1) -- (0,2) -- (1,2) -- (2,2) -- (2,3) -- (2,4) -- (2,5) -- (3,5) -- (4,5) -- (5,5) -- (5,6) -- (5,7) -- (6,7) -- (7,7) -- (7,8) -- (8,8);
		
		\draw (0.5,0.5) circle (0.4 cm) node {$2$};
		\draw (0.5,1.5) circle (0.4 cm) node {$3$};
		\draw (2.5,2.5) circle (0.4 cm) node {$0$};
		\draw (2.5,3.5) circle (0.4 cm) node {$1$};
		\draw (2.5,4.5) circle (0.4 cm) node {$4$};
		\draw (5.5,5.5) circle (0.4 cm) node {$0$};
		\draw (5.5,6.5) circle (0.4 cm) node {$5$};
		\draw (7.5,7.5) circle (0.4 cm) node {$0$};
		
		\node at (-0.5,1.5) {$\ast$};
		\node at (1.5,3.5) {$\ast$};
		\node at (1.5,4.5) {$\ast$};
		\node at (4.5,6.5) {$\ast$};
		\end{tikzpicture}
		\caption{A partially labelled decorated Dyck path of area $0$.} \label{fig: pld area 0}
	\end{figure}
	
	It is immediate to check that diagonal inversions are mapped to diagonal inversions, hence the map preserves the dinv statistic.
\end{proof}

\begin{proposition} \label{prop:xi_map_OPR_area}
	There exists a bijection \[ \eta \colon T \subseteq \PLD(m,n)^{\ast n-k} \rightarrow \OP^R(m,n)^k  \] where $T$ is the set of all partially labelled decorated Dyck paths of dinv $0$, and it maps area to maj.
\end{proposition}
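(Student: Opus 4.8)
I would define $\eta$ by an explicit procedure parallel to the map $\xi$ of Proposition~\ref{prop:xi_map_OPR_dinv}, then invert it by characterising the paths of $\dinv$ equal to $0$, and finally match $\area$ with $\maj$ by a short computation. Call a vertical step of $D\in\PLD(m,n)^{\ast n-k}$ an \emph{anchor} if it is not a decorated rise; there are exactly $(n+m)-(n-k)=m+k$ of them. Since every maximal run of decorated rises sits directly on top of an (undecorated) vertical step, each anchor $v$ together with the maximal run of decorated rises resting on it carries a block $B(v)$ of labels, and these $m+k$ blocks partition all $m+n$ labels. Listing the blocks in order of \emph{decreasing} height of their anchors (from the top of the path downwards) produces an ordered multiset partition $\eta(D)\in\OP(m,n)^k$; because the lowest anchor is the very first vertical step, whose label is nonzero, the rightmost block contains no $0$, so $\eta(D)\in\OP^R(m,n)^k$, and clearly $x^D=x^{\eta(D)}$.

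\textbf{Inverting $\eta$: the structure of $\dinv$-$0$ paths.} Writing $a_1\cdots a_{n+m}$ and $l_1\cdots l_{n+m}$ for the area word and the labels of $D$, one first records that $\dinv(D)=0$ is equivalent to the two conditions: (A) $i<j$ and $a_i=a_j$ imply $l_i\ge l_j$, and (B) $i<j$ and $a_i=a_j+1$ imply $l_i\le l_j$. The crucial claim, which I expect to be the main obstacle, is that (A) and (B) force a rigid shape: going up the path, the anchor of each new block $B'$ either sits directly on top of the previous vertical step (so that this anchor is an undecorated rise), which happens exactly when $\min B'>\max B$ with $B$ the block just below, or else is placed exactly one east step to its right. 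The heart of the argument is that inserting two or more east steps between consecutive blocks would create a secondary inversion (violating (B)) between the top of the lower block and the new anchor, or a primary inversion (violating (A)) with the second highest step of the lower block, while the choice between zero and one east step is pinned down by the column-strictness rule together with (A) and (B); the repeated labels, and in particular the $0$ labels subject to the ``$0$ not in the first column'' constraint, need extra care here. Granting this, $D$ is reconstructed uniquely from the ordered sequence of blocks, so $\eta$ is a bijection onto $\OP^R(m,n)^k$; conversely the reconstruction always yields a legal partially labelled decorated Dyck path (it stays weakly above the diagonal, column-strictness holds at each merge, and no $0$ ever reaches the first column because a block merging into it must have $\min B'>\max B\ge 0$).

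\textbf{Matching $\area$ with $\maj$.} Write $\pi=B_1\mid\cdots\mid B_{m+k}$, $b_h=|B_h|$, and let $d_h$ be the diagonal (area-word value) of the anchor of $B_h$ in $D=\eta^{-1}(\pi)$. The reconstruction rule gives $d_{m+k}=0$ and, for $h<m+k$,
\[
 d_h=d_{h+1}+(b_{h+1}-1)+\chi\bigl(\min B_h>\max B_{h+1}\bigr).
\]
Since the area-word entries of decorated rises are discarded in the definition of $\area$, one has $\area(D)=\sum_{h=1}^{m+k}d_h$, and unrolling the recursion yields
\[
 \area(D)=\sum_{h=1}^{m+k}(h-1)(b_h-1)+\sum_{h=1}^{m+k-1}h\,\chi\bigl(\min B_h>\max B_{h+1}\bigr).
\]
It then remains to compute $\maj(\pi)$ and see it equals this: in the word $\sigma$ attached to $\pi$ the descents are the positions internal to a block, which contribute $(h-1)(b_h-1)$ from $B_h$ since the $w$-word equals $h-1$ there, together with the boundaries where $\min B_h>\max B_{h+1}$, each contributing the value $h$ of the $w$-word at the last position of $B_h$. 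This matches the displayed expression term by term, so $\area(D)=\maj(\pi)$, which finishes the proof once the structural claim of the previous paragraph is established — that claim being the part I expect to require the real work.
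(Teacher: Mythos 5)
Your map $\eta$ is the same as the paper's (group each undecorated vertical step with the decorated rises resting on it, list blocks top to bottom), and your observation that the rightmost block avoids $0$ because the first column cannot contain a $0$ is exactly the paper's argument. Where you differ is in the two supporting steps: the paper matches $\area$ to $\maj$ by a double-counting argument (each rise contributes the number of undecorated steps above it; summing over rises counts each non-decorated-rise step once per cell in its row), whereas you unroll the diagonal-value recursion $d_h=d_{h+1}+(b_{h+1}-1)+\chi(\min B_h>\max B_{h+1})$ and compare term-by-term with the $w$-word formula for $\maj$ — which is cleaner and, as a check, does agree with the paper's answer. The paper leaves bijectivity entirely implicit (it just defines the map and moves on), and you are right that the real content is the structural claim that a $\dinv$-$0$ path can have only $0$ or $1$ east steps between consecutive undecorated vertical steps; this is equivalent to saying the area word is weakly increasing along the path.

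One concrete gap in your sketch of that claim: your stated reason — a secondary inversion with the top of the lower block, or a primary inversion with its second-highest step — only closes the case where the lower block has size at least two. If the lower block is a singleton, neither of those two pairs yields a contradiction, and you have to look further down the path. The fix is short: suppose there are $c\ge 2$ east steps before the anchor at row $r$, so $a_r<a_{r-1}$. Since $a_1=0$ and the area word increases by at most $1$ per step, there is a largest $r'<r-1$ with $a_{r'}=a_r$, and then $a_{r'+1}=a_r+1$, so row $r'+1$ is a rise and column-strictness gives $l_{r'}<l_{r'+1}$. But the pair $(r',r)$ is a primary pair ($a_{r'}=a_r$), giving $l_{r'}\ge l_r$, and $(r'+1,r)$ is a secondary pair ($a_{r'+1}=a_r+1$), giving $l_{r'+1}\le l_r$; together $l_{r'}<l_{r'+1}\le l_r\le l_{r'}$, a contradiction. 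With this in place your reconstruction is indeed well-defined and inverse to $\eta$, and since the resulting area word is weakly increasing, the verification that $\dinv=0$ for the reconstructed path is immediate (no secondary pairs exist at all, and primary pairs are handled by the one-east-step rule together with the within-block ordering).
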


\begin{proof}
	First we write the labels in the order in which they appear going top to bottom in the Dyck path, and then we declare that two consecutive letters belong to the same block if and only if they form a decorated rise in the path. In this way we get an ordered set partition with $m+k$ blocks by construction. For example the path in Figure~\ref{fig: pld dinv 0} corresponds under $\eta$ to the ordered multiset partition $310 \mid 60 \mid 5 \mid 42$ in $\OP^R(2,6)^2$.
	
	Now we compute the area in the following way: for each (not necessarily decorated) rise (which corresponds to a descent in the word $w$ of the image), we count the number of steps that are not decorated rises and that lie strictly above it (which corresponds to the number of blocks to the left of the corresponding letter in the image). If we sum all these values, then we get the area, since each step that is not a decorated rise is counted a number of times equal to its height (i.e. the vertical steps below it) minus the number of ascents (i.e. the horizontal steps below it) in the reverse pmaj reading word (i.e. the sequence of the labels going top to bottom), which is exactly the number of squares in its row. But the sum is exactly the one given in the definition of maj, so the two statistics match.
	
	\begin{figure}
		\centering
		\begin{tikzpicture}[scale=.6]
		\draw[step=1.0, gray!60, thin] (0,0) grid (8,8);
		
		\draw[gray!60, thin] (0,0) -- (8,8);
		
		\draw[blue!60, line width=1.6pt] (0,0) -- (0,1) -- (0,2) -- (0,3) -- (1,3) -- (1,4) -- (1,5) -- (2,5) -- (2,6) -- (2,7) -- (2,8) -- (3,8) -- (4,8) -- (5,8) -- (6,8) -- (7,8) -- (8,8);
		
		\draw (0.5,0.5) circle (0.4 cm) node {$2$};
		\draw (0.5,1.5) circle (0.4 cm) node {$4$};
		\draw (0.5,2.5) circle (0.4 cm) node {$5$};
		\draw (1.5,3.5) circle (0.4 cm) node {$0$};
		\draw (1.5,4.5) circle (0.4 cm) node {$6$};
		\draw (2.5,5.5) circle (0.4 cm) node {$0$};
		\draw (2.5,6.5) circle (0.4 cm) node {$1$};
		\draw (2.5,7.5) circle (0.4 cm) node {$3$};
		
		\node at (-0.5,1.5) {$\ast$};
		\node at (0.5,4.5) {$\ast$};
		\node at (1.5,6.5) {$\ast$};
		\node at (1.5,7.5) {$\ast$};		
		\end{tikzpicture}
		\caption{A partially labelled decorated Dyck path with dinv $0$.}\label{fig: pld dinv 0}
	\end{figure}
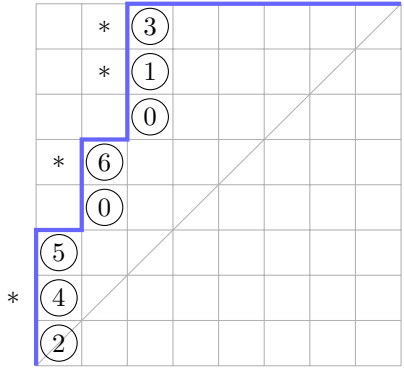
\end{proof}

\subsection{Standardization}
\begin{definition}
	Given a partially labelled Dyck path $D$, we define its \emph{dinv reading word} to be the word obtained by reading its positive labels along the diagonals, from bottom to top, from left to right. While we define its \emph{pmaj reading word} to be the word obtained by reading its positive labels along the rows, from bottom to top.
\end{definition}
\begin{example}
	For example, consider the partially labelled Dyck path of Figure~\ref{fig: pld area 0}: its dinv reading word is $2\, 3\, 1\, 5\, 4$, while its pmaj reading word is $2\, 3\, 1\, 4\, 5$.
\end{example}

\begin{definition}
	We define the set $\SOP(m,n)^k$ of \emph{standardized ordered multiset partitions} to be the set $\OP(m,(1,1,\dots,1))^k\subset \OP(m,n)^k$ of all ordered multiset partitions of type $(m,(1,1,\dots,1))$ with $m+k$ blocks. We define $\SOP^L(m,n)^k$ and $\SOP^R(m,n)^k$ as the subsets of $\SOP(m,n)^k$ of the partitions that do not contain a $0$ in their leftmost or rightmost block, respectively.
\end{definition}

Given an ordered multiset partition $\pi$, we define its \emph{dinv (resp. maj) reading word} as the dinv (resp. pmaj) reading word of the corresponding partially labelled Dyck path $\xi^{-1}(\pi)$ (possibly allowing a blank in the bottom left corner), where $\xi$ is the obvious extension of the map in Proposition~\ref{prop:xi_map_OPR_dinv}. We define its \emph{inv reading word} as its dinv reading word.

\begin{definition}
	Given an ordered multiset partition $\pi \in \OP(m,n)$ we can define the \emph{standardization} with respect to a statistic (it being inv, dinv, or maj) as the unique standard multiset partition obtained from $\pi$ by replacing its $1$'s by $1,2, \dots \alpha_1$, its $2$'s by $\alpha_1+1, \dots, \alpha_1+\alpha_2$ and so on, in such a way that the reverse reading word (with respect to the relevant statistic) of the standardization is an $\alpha$-shuffle (i.e. it is in $1, \dots, \alpha_1 \shuffle \alpha_1+1, \dots, \alpha_1+\alpha_2 \shuffle \dots$).
\end{definition}
\begin{example}
	Consider the ordered multiset partition $10\mid 2\mid  410\mid 21\in \OP^R(2,(3,2,0,1))^2$. Its standardization with respect to both dinv and inv is $10\mid 5\mid  620\mid 43$, whose reverse dinv (or inv) reading word is $6\, 1\, 2\, 4\, 5\, 3 \in 1,2,3 \shuffle 4,5 \shuffle 6$, while its standardization with respect to maj is $10 \mid 4 \mid 620 \mid 53$, whose reverse pmaj reading word is $1\, 4\, 6\, 2\, 5\, 3 \in 1,2,3 \shuffle 4,5 \shuffle 6$.
\end{example}
The verification of the following proposition is straightforward, hence left to the reader.
\begin{proposition}
	The standardization with respect to one of the statistics dinv, inv or maj preserves that statistic.
\end{proposition}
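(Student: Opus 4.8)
The plan is to express $\inv$, $\dinv$ and $\maj$ through local data --- for $\inv$ and $\dinv$, the status (inversion, primary or secondary diagonal inversion) of each pair of positions; for $\maj$, the descent set of $\sigma$ together with the word $w$ --- and to check that standardization leaves that data unchanged. First I would record the elementary facts on which everything rests: standardization keeps the number of blocks, the block sizes, and the block containing each position; it relabels within each block in an order-preserving way, so it preserves, for every position, whether that position is minimal in its block and, more generally, whether it is the $h$-th smallest element of its block; and for any two positions carrying \emph{distinct} labels in $\pi$ it preserves their relative order. From these it follows at once that every pair of positions with distinct labels has the same status before and after standardizing --- the relevant strict inequality ($j<i$ or $i<j$), the ranks, and the ``distinct blocks'' and ``appears to the left'' conditions being all unchanged --- that every descent of $\sigma$ that is not a tie between two equal labels is preserved, and that $w$, which depends only on the block structure and on the ``minimal in its block'' predicate, is unchanged. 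Hence the only remaining cases are pairs of positions carrying the \emph{same} label (for $\inv$ and $\dinv$) and adjacencies of two equal labels in $\sigma$ (for $\maj$); each of these contributes $0$ to the statistic of $\pi$ itself, since the defining inequalities $j<i$, $i<j$, $\sigma_i>\sigma_{i+1}$ are strict, and what must be shown is that they still contribute $0$ after standardizing.

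For this I would use the combinatorial description of the reading words of the area-zero path $\xi^{-1}(\pi)$ that is implicit in the proofs of Propositions~\ref{prop:xi_map_OPR_dinv} and \ref{prop:xi_map_OPR_area}: if $\pi$ has blocks $B_1\mid\dots\mid B_{m+k}$, then $\xi^{-1}(\pi)$ is the path whose $t$-th maximal vertical run has length $|B_{m+k+1-t}|$, is immediately followed by that many horizontal steps, and is labelled from bottom to top by the elements of $B_{m+k+1-t}$ in increasing order. It follows that the reverse of its pmaj reading word is exactly $\sigma$ with its zeros deleted, while its dinv reading word is obtained by listing, for $h=1,2,\dots$ in turn, the $h$-th smallest element of each block --- omitting any such entry that equals $0$ and any block with fewer than $h$ elements --- the blocks being scanned in the order $B_{m+k},B_{m+k-1},\dots,B_1$; in particular the zeros affect neither reading word nor the standardization, so the blank that may occur in the bottom-left cell is harmless. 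From this one reads off the single fact that matters: if $p$ lies to the left of $p'$ in $\sigma$, both carry the same label $v$, and $p$, $p'$ are respectively the $h$-th and $h'$-th smallest elements of their (distinct) blocks, then $p$ precedes $p'$ in the dinv reading word if and only if $h<h'$. Since the $\alpha$-shuffle condition forces the standardized labels of the occurrences of $v$ to increase along the \emph{reverse} dinv reading word, this says precisely that the standardized labels $v_p$ and $v_{p'}$ satisfy $v_{p'}<v_p$ if and only if $h<h'$.

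The conclusion is then a one-line check in each case. A primary diagonal inversion of the pair $(p,p')$ would require $h=h'$ together with $v_{p'}<v_p$; a secondary one would require $h'=h+1$ together with $v_p<v_{p'}$; and an inversion in the sense of $\inv$ would require $h'=1$ together with $v_{p'}<v_p$. Each of these contradicts the equivalence $v_{p'}<v_p\iff h<h'$, so after standardizing no pair of equal labels contributes to $\inv$ or to $\dinv$. For $\maj$, two adjacent equal labels of $\sigma$ necessarily straddle a block boundary, and since the reverse pmaj reading word of the standardization is an $\alpha$-shuffle, the occurrences of $v$ appear left-to-right along $\sigma$ in increasing order; hence the left one receives the smaller standardized label, so the adjacency is not a descent of the standardized word. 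Together with the facts of the first paragraph this gives that $\inv$, $\dinv$ and $\maj$ are all preserved. I expect the only step with any real content to be the reading-word description of $\xi^{-1}(\pi)$, together with keeping the ranks $h,h'$ straight through the short case analysis; everything else is routine.
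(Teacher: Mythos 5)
The paper does not actually supply a proof here --- it declares the verification ``straightforward, hence left to the reader'' --- so there is no argument of the authors' to compare against. Your proof is a correct and careful carrying-out of exactly that verification, and it follows the natural route: for pairs of positions whose original labels are distinct, standardization preserves all the data entering the definitions (relative order of labels, block membership, in-block rank, minimality, the word $w$), so their contributions to $\inv$, $\dinv$, and $\maj$ are unchanged; and for ties, which contribute nothing before standardization, you must show they still contribute nothing afterwards. Your key device for this is the explicit block-by-rank description of the dinv reading word of $\xi^{-1}(\pi)$, from which you correctly extract the equivalence $v_{p'}<v_p\iff h<h'$ (with $p$ to the left of $p'$ in $\sigma$ and $h,h'$ their in-block ranks); I checked this against the paper's worked example ($0\mid 50\mid 410\mid 32$ has dinv reading word $2\,3\,1\,5\,4$, which is indeed rank $1$, then rank $2$, then rank $3$, rightmost block first within each rank), and the three tie cases for $\inv$, primary $\dinv$, and secondary $\dinv$ each do contradict that equivalence as you claim. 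The $\maj$ tie case is likewise correct once one notes, as you do, that the reverse pmaj reading word of $\xi^{-1}(\pi)$ is $\sigma$ with the zeros deleted, so the shuffle condition forces equal labels to standardize in increasing order left to right across $\sigma$, ruling out any new descent. In short: the argument is sound and is what the authors presumably had in mind; you have simply made the ``straightforward'' explicit.
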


%
%

Using the well-known theory of shuffles (see e.g. \cite{Haglund-Book-2008}*{Chapter~6}), and the previous proposition, it is clear that \[ \sum_{\pi \in \OP^L(m,n)^{k}} q^{\dinv(\pi)} x^\pi = \sum_{\pi \in \SOP^L(m,n)^{k}} q^{\dinv(\pi)} Q_{\ides(\sigma(\pi))}, \]
where $\sigma(\pi)$ denotes the dinv reading word of $\pi$ and $\ides(\sigma)$ is the descent set of $\sigma^{-1}$. 

Similarly 
\begin{align*}
	\sum_{\pi \in \OP^L(m,n)^{k}} q^{\inv(\pi)} x^\pi  & = \sum_{\pi \in \SOP^L(m,n)^{k}} q^{\inv(\pi)} Q_{\ides(\sigma(\pi))} \, ,\\
	\sum_{\pi \in \OP^R(m,n)^{k}} q^{\inv(\pi)} x^\pi  & = \sum_{\pi \in \SOP^R(m,n)^{k}} q^{\inv(\pi)} Q_{\ides(\sigma(\pi))} \, ,\\
	\sum_{\pi \in \OP^R(m,n)^{k}} q^{\maj(\pi)} x^\pi  & = \sum_{\pi \in \SOP^R(m,n)^{k}} q^{\maj(\pi)} Q_{\ides(\widetilde{\sigma}(\pi))} \, ,\\
\end{align*}
where $\widetilde{\sigma}(\pi)$ denotes the maj reading word of $\pi$.

\subsection{Insertion maps}

In \cite{Wilson-Equidistribution}, Wilson proved that, given a composition $\alpha$, the three statistics defined above are equidistributed over the set of ordered multiset partitions of type $\alpha$.  He did so by constructing bijections using Carlitz's insertion method. We will describe these maps, but will not argue why they are bijective: the interested reader can find the argument in \cite{Wilson-Equidistribution}. 

\begin{theorem} \label{thm:Carlitz_bijections}
	There exist bijections
	\begin{align*}
		\phi: \OP(m,n)^k \rightarrow \OP(m,n)^k \\
		\theta: \OP(m,n)^k \rightarrow \OP(m,n)^k
	\end{align*} such that for all $\pi\in \OP(m,n)^k$ 
	\begin{align*}
		\inv(\phi(\pi))=\dinv(\pi) \\
		\inv(\theta(\pi))= \maj(\pi).
	\end{align*}
\end{theorem}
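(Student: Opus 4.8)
The plan is to realize each of $\phi$ and $\theta$ as a composition of two \emph{Carlitz insertion} bijections, following Wilson \cite{Wilson-Equidistribution} and adding only the modifications needed to accommodate the $m$ copies of the label $0$. For each statistic $\mathrm{stat}\in\{\inv,\dinv,\maj\}$ I would construct an explicit bijection
\[
\Psi_{\mathrm{stat}}\colon \OP(m,n)^k \longrightarrow \mathcal{S}(m,n)^k,
\]
where $\mathcal{S}(m,n)^k$ is one fixed set of ``insertion data'' — finite sequences of non-negative integers, each lying in a prescribed interval that depends only on how many blocks and how many copies of a given value have been placed so far — with the property that $\mathrm{stat}(\pi)$ equals the sum of the entries of $\Psi_{\mathrm{stat}}(\pi)$, and such that $\Psi_{\mathrm{stat}}$ preserves the type (hence restricts to $\SOP(m,n)^k$ and is compatible with standardization). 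Granting this, one sets $\phi\coloneqq \Psi_{\inv}^{-1}\circ\Psi_{\dinv}$ and $\theta\coloneqq \Psi_{\inv}^{-1}\circ\Psi_{\maj}$; these are bijections by construction, and since $\inv(\Psi_{\inv}^{-1}(d))$ is the sum of the entries of $d$ for every datum $d$, we get $\inv(\phi(\pi))=\dinv(\pi)$ and $\inv(\theta(\pi))=\maj(\pi)$.

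To define $\Psi_{\mathrm{stat}}$ I would insert the distinct values (including $0$) into an initially empty structure, processing them from largest to smallest and inserting all copies of a value before passing to the next one; after processing a value one has an ordered multiset partition on the remaining letters, and the insertion datum records, for each copy inserted, which of the currently available ``slots'' was used. For $\inv$ the rule is the transparent one: a new copy is slid into a position among the existing blocks and the gaps between them, and choosing the $r$-th such position from the left creates exactly $r$ (equivalently $r-1$, depending on convention) new inversions, with none destroyed; the $m$ copies of $0$ are handled at the final stage, each entering a distinct block. For $\maj$ one uses Wilson's maj-insertion, in which placing a copy in the $r$-th available slot alters the descent word $w$ so as to raise $\maj$ by exactly $r$; for $\dinv$ one uses Wilson's dinv-insertion, calibrated the same way. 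The key structural point is that, just before the copies of any given value are placed, the list of available slots has the same cardinality for all three procedures, so the three maps do land in the common set $\mathcal{S}(m,n)^k$.

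The verification then has two parts. First, that each insertion step really does increase the relevant statistic by the slot index: for $\inv$ this is immediate from the definition (a copy written at the front of its block forms an inversion precisely with the minimal elements of the blocks to its right), while for $\dinv$ and $\maj$ it is a finite but delicate case analysis on the position of the new copies relative to the block boundaries and to the ranked elements of the blocks — this is exactly what is carried out in \cite{Wilson-Equidistribution}, and the only genuinely new observation is that the $m$ copies of $0$ contribute no inversions, diagonal inversions or descents beyond what the general rule predicts, because $0$ lies strictly below every other label and is minimal in any block containing it. Second, that each insertion is reversible: one must be able to strip off the copies of the smallest remaining value and read back the corresponding entries of the datum; this is precisely Wilson's argument, which we do not reproduce.

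The main obstacle is this bookkeeping for the $\dinv$ and $\maj$ insertions — pinning down exactly which slots are available and matching the increment of the statistic to the slot index, and doing so simultaneously for the three statistics so that their codomains coincide — which is fiddlier than in Wilson's set-partition setting because of the repeated values and the $m$ zeros. Since the underlying bijections and their inverses are established in \cite{Wilson-Equidistribution}, I would present the maps $\phi$ and $\theta$ explicitly for use in Section~5, verify the statistic identities, add the short remarks needed for the zeros, and refer to \cite{Wilson-Equidistribution} for the proof that the insertions are bijective.
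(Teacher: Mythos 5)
Your approach is essentially the paper's: realize $\phi$ and $\theta$ by Carlitz insertion \`a la Wilson, factoring through a common intermediate set of deletion/insertion data keyed to the $\inv$ statistic (so $\phi=\Psi_{\inv}^{-1}\circ\Psi_{\dinv}$ and $\theta=\Psi_{\inv}^{-1}\circ\Psi_{\maj}$), and outsource the bijectivity of the building blocks to \cite{Wilson-Equidistribution}. One caution about a detail you treat too lightly: the paper never deletes or inserts the zeros at all — the deletion phase stops at the fixed base state $0\mid 0\mid\cdots\mid 0$ and only nonzero letters are ever moved — whereas you propose to insert the $m$ zeros at the final stage; Wilson's slot-labelling makes the slot index equal the statistic increment precisely because the letter being inserted is \emph{maximal} among those already present, and this fails for a trailing $0$ (for instance, inserting a singleton $\{0\}$ raises $\inv$ by the number of \emph{letters} to its left, not by the number of blocks), so you should adopt the paper's convention of keeping the zeros fixed rather than inserting them.
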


Both these maps are constructed following the same method. Start with a path in $\pi\in \OP(m,n)^k$ of type $\alpha=(\alpha_1,\alpha_2, \dots)$.  

Remove all the nonzero letters of $\pi$ one after the other, always removing a maximal letter (in specific order) and recording 
\begin{enumerate}
	\item the amount the statistic (dinv for $\phi$ and maj for $\theta$) goes down by (this may be zero);
	\item if the number of blocks goes down or not. 
\end{enumerate}
Repeating this process of deleting a maximal letter and recording this data, we obtain, after $n$ deletions, the ordered multiset partition $0\mid 0 \mid \cdots \mid 0 $. 

Next, we build the image of $\pi$, starting from $0\mid 0 \mid \cdots \mid 0 $ and inserting $\alpha_1$ $1$'s then $\alpha_2$ $2$'s and so on. We make sure the $i$-th insertion happens in a place such that inv goes up by the same amount as it went down after the $i$-th deletion. Furthermore we make sure the $i$-th insertion changes the number of blocks if and only if the $i$-th deletion did. 

If $\alpha_l$ is the last nonzero component of $\alpha$, let $\alpha^-$ be the weak composition obtained from $\alpha$ by putting $\alpha_l$ equal to zero. 

We describe insertion methods for each statistic. That is, given a multiset $D$ of size $\alpha_l$  of pairs of data $(c,b)$ where $c$ is a nonnegative integer representing the change in the statistic and $b$ a bit indicating wether the number of blocks changes ($b=1$) or not ($b=0$); we detail how to insert $\alpha_l$ $l$'s into a $\pi\in \OP(m,n-\alpha_l)^k$ of type $\alpha^-$ such that each insertion is compatible with the $(c,b)$ data.

\begin{itemize}
	\item \textit{Insertion for inv.} 
	
	\begin{enumerate}
		\item Label the blocks from $0$ to $m+k-1$, from right to left.
		\item  Define \emph{gaps} to be the positions before the first block, between two existing blocks and after the last block. Label the gaps from $0$ to $m+k$, right to left.
	\end{enumerate}
	We pick the elements of $D$ one by one from biggest $c$ to smallest and picking $(c,0)$ before $(c,1)$. For each of the $(c,0)$, we insert $l$ into the block labelled $c$, thus not changing the number of blocks and augmenting the inv by $c$. For each of the $(c,0)$'s we create a new block containing only an $l$ in the position of the gap labelled $c$, thus augmenting the inv by $c$.
	
	\item \textit{Insertion for dinv.} 
	\begin{enumerate}
		\item Let $h$ be the size of the biggest block. Label the blocks from $0$ to $m+k-1$ starting with the blocks of size $h$, from left to right, then the blocks of size $h-1$, again from left to right, etc.  
		\item Label the gaps in the same way as for the inv insertion. 
	\end{enumerate}
	Insert the $\alpha_l$ $l$'s in the same way as for the inv, with respect to this new labelling.
	
	\item \textit{Insertion for maj.} This is where the representation of ordered multiset partitions is more natural. 
	
	Let $\sigma$ be the word of nonegative integers obtained from $\pi$ by disregarding the $\ast$'s. We say that the position between two letters of an ordered multiset partition is a descent if the index of the letter directly preceding it is a descent of $\sigma$. We always consider the last gap (i.e. the position after the last block), to be a descent as well. 
	
	Label the $m+k+1$ gaps from $0$ to $m+k$ starting with the descents, from right to left, followed by the gaps that are not descents, from left to right. 
	
	Define $D'$ from $D$ by replacing all the $(c,0)$'s by $(c+1,0)$'s and pick its elements one by one starting with the elements having a maximal first component and picking $(x,1)$ before $(x,0)$.

	\begin{enumerate}
		\item For each $(c+1,0)$, insert it into the gap numbered $c+1$, move all the $\ast$'s to the right of the inserted $l$ one descent to the left and add a $\ast$ in the rightmost descent between two letters. This creates $c$ maj and no new block.
		\item For each $(c,1)$, insert it in the gap numbered $i$ and move all the $\ast$'s to the right of the inserted $n$ one descent to the left. This creates $c$ maj and a new block. 
	\end{enumerate} 
	
\end{itemize}

One can easily deduce from these insertion methods, in which order to delete $\alpha_l$ $l$'s in a partition $\pi\in \OP(m,n)^k$ of type $\alpha$ and which data to record.

\begin{example}
	We go through an example for $\phi$. Start from an element in $\OP(3,7)^3$ of type $(2,2,3)$: \[0\mid 3\mid 20\mid 1\mid 310 \mid 32.\] First, we delete the nonzero letters starting with the $3$ whose deletion has the smallest influence on the dinv. 
	\begin{center}
		\begin{tabular}{cccc}
			Deleted letter & number of blocks changes & dinv goes down by & new partition\\
			\hline
			3 & No &1 & $0\mid 3\mid 20\mid 1\mid 10 \mid 32$ \\
			3 & Yes &4 & $0\mid 20\mid 1\mid 10 \mid 32$\\
			3 & No &4 & $0\mid 20\mid 1\mid 10 \mid 2$\\
			2 & Yes &0 &$0\mid 20\mid 1\mid 10 $\\
			2 & No &2 & $0\mid 0\mid 1\mid 10  $\\
			1 & Yes &1 & $0\mid 0\mid 10 $\\
			1 & No &2 &$ 0\mid 0\mid 0 $\\
		\end{tabular} 
	\end{center}	
	Then we do the same thing but backwards and looking at the inv instead of the dinv. We start from $0\mid 0 \mid 0$. 
	
	\begin{center}
		\begin{tabular}{cccc}
			Inserted letter & number of blocks changes & inv goes up by & new partition\\
			\hline
			1 & No & 2 & $10 \mid 0  \mid 0 $\\
			1 & Yes &1 & $10 \mid 0 \mid 1 \mid 0 $\\
			2 & No &2 & $10 \mid 20 \mid 1 \mid 0 $ \\
			2 & Yes &0 & $10 \mid 20 \mid 1 \mid 0 \mid 2$\\
			3 & No &4 &  $310 \mid 20 \mid 1 \mid 0 \mid 2$\\
			3 & Yes &4 & $310\mid 3 \mid 20 \mid 1 \mid 0 \mid 2$ \\
			3 & No &1 & $310 \mid 3\mid 20 \mid 1 \mid 30 \mid 2$
	\end{tabular} \end{center}	
	So we get that \[\phi( 0\mid 3\mid 20\mid 1\mid 310 \mid 32)=310 \mid 3\mid 20 \mid 1 \mid 30 \mid 2.  \] Indeed we check that 
	\[\dinv( 0\mid 3\mid 20\mid 1\mid 310 \mid 32)=\inv(310 \mid 3\mid 20 \mid 1 \mid 30 \mid 2)=14.  \]
	
\end{example}

We look now at some restrictions of the maps $\phi$ and $\theta$. 

\begin{lemma} \label{lem:restrictions}
	We have
	\begin{enumerate}[(i)]
		\item $\phi(\OP^R(m,n)^k)=\OP^R(m,n)^k$;
		\item $\phi(\OP^L(m,n)^k)=\OP^L(m,n)^k$;
		\item $\theta(\OP^R(m,n)^k)=\OP^R(m,n)^k$.
	\end{enumerate}
\end{lemma}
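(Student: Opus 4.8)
The plan is to exploit the one structural feature common to $\phi$ and $\theta$: the zero letters are never deleted or inserted during Carlitz's procedure, so throughout the whole deletion--reinsertion process they occupy $m$ fixed \emph{skeleton} blocks, every block created during reinsertion is a singleton carrying a positive letter, and no reinsertion into an existing block ever introduces a zero. Consequently the zeros of $\phi(\pi)$ (and of $\theta(\pi)$) sit precisely in these $m$ skeleton blocks, possibly enlarged by positive letters; hence $\phi(\pi)\in\OP^R$ (resp. $\in\OP^L$) if and only if at some moment of the reinsertion a new block is created to the right of the last skeleton zero (resp. to the left of the first skeleton zero), and by the gap--labelling conventions this is decided by the recorded $(c,b)$-data. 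Since $\phi$ and $\theta$ are bijections of the finite sets $\OP(m,n)^k$, it suffices to prove the three inclusions $\phi(\OP^R)\subseteq\OP^R$, $\phi(\OP^L)\subseteq\OP^L$, $\theta(\OP^R)\subseteq\OP^R$; and all three are vacuous when $m=0$, since then $\OP^R=\OP^L=\OP$, so we may assume $m\ge 1$.

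For (i) and (iii) I would track the block $B$ which is the rightmost block of $\pi$. As $\pi\in\OP^R$, $B$ carries no zero, so during the deletion phase (which removes positive letters by decreasing value) all of $B$ is eventually removed; let $a$ be the last letter deleted from $B$. At the instant of its deletion $a$ is simultaneously the only remaining element of $B$ --- so removing it drops the number of blocks, giving $b=1$ --- and the largest positive value still present. One checks directly from the definitions that this deletion costs nothing in the relevant statistic: for $\dinv$, $a$ lies in the (then) rightmost block and is the minimum of its singleton block, so it can be neither the larger nor the smaller entry of a diagonal inversion; for $\maj$, $a$ is the last letter of the word $\sigma$, the preceding position is not a descent because every surviving value is $\le a$, and $a$ is block-minimal, so neither $\mathrm{Des}(\sigma)$ nor the $w_i$ attached to descents change. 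Thus this deletion records the datum $(0,1)$, and feeding $(0,1)$ into the inv-insertion creates a new block in the rightmost gap, i.e. to the right of the last skeleton zero; from then on no zero can return to the rightmost block, so $\phi(\pi),\theta(\pi)\in\OP^R$.

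Part (ii) is the delicate one, and I expect it to absorb essentially all of the effort. Applying the same idea to the leftmost block $B'$ of $\pi$ and deleting its last survivor $a$ again gives $b=1$, but now the $\dinv$ cost equals the number of other blocks then present whose minimum is smaller than $a$, which is usually positive; one is left having to match this value of $c$ against the label of the \emph{leftmost} gap in the inv-insertion, and that equality is sensitive to how many singleton blocks $\{a\}$ survive next to $B'$. I would handle this by induction on $n$, using that Carlitz insertion is built up value by value: if $l$ is the largest value of $\pi$ and $\pi'$ is $\pi$ with all its $l$'s deleted, then $\phi(\pi)$ is obtained from $\phi(\pi')$ by inv-inserting the $\alpha_l$ copies of $l$ with the recorded data $D_l$. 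If $\pi'$ still lies in $\OP^L$ the inductive hypothesis applies to $\phi(\pi')$ and inserting positive letters cannot spoil the property; the only problematic case is when deleting the $l$'s empties the leftmost block of $\pi$, i.e. when that block is the singleton $\{l\}$ with $l$ maximal (and then $\phi(\pi')\notin\OP^L$ by bijectivity and induction, so a new leftmost block is genuinely needed). There one must show that $D_l$ contains a datum which, under the inv-insertion into $\phi(\pi')$, creates a new leftmost block; identifying that datum precisely --- using the prescribed insertion order (biggest $c$ first, $(c,0)$ before $(c,1)$) and keeping an exact count of the surviving $\{l\}$-blocks so that the $c$-value lands on the leftmost gap --- is the main obstacle, and is where the argument must be carried out in detail.
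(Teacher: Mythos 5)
Your treatment of parts (i) and (iii) is correct and is essentially the paper's argument: for $\pi\in\OP^R(m,n)^k$, the last deletion from the rightmost block is a block-destroying step whose cost in both $\dinv$ and $\maj$ is $0$, because at that instant the rightmost block is a singleton $\{a\}$ with $a$ maximal among the surviving positive values; this records a $(0,1)$ datum, and any $(0,1)$ fed into the inv-insertion creates a new block in the rightmost gap, after which no zero can ever sit in the rightmost block again. The paper justifies the zero-cost claim by noting that the rightmost gap is labelled $0$ in both the dinv- and maj-insertion schemes; your direct verification from the definitions is an acceptable equivalent.

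Part (ii), however, is not a proof: it is an announced plan whose decisive step you explicitly leave open. You have correctly isolated the obstruction (other singleton blocks carrying the same value as the last survivor of the leftmost block would lower the recorded $c$), but the induction on $n$ you propose never resolves it, and you acknowledge that ``identifying that datum precisely'' remains to be done. The paper closes the gap with a non-inductive observation that is absent from your sketch: in Wilson's algorithm the number of blocks just \emph{before} the $j$-th deletion equals the number of blocks just \emph{after} the matching insertion, i.e.\ the sequence of block counts through the deletion-then-insertion run is a palindrome. Combined with the assertion that, in the prescribed deletion order, the last letter $i$ of the leftmost block is removed only once all other copies of its value are gone --- so that every one of the $h$ surviving blocks has minimum strictly less than $i$ and the $\dinv$-drop is exactly $h$ --- the palindrome forces the matching insertion of $i$ to be block-creating with cost $h$ into a partition having exactly $h$ blocks, which means into the leftmost gap. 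That single-step argument is precisely what your induction is groping towards and never reaches; without it (or an equivalent) your proof of (ii) is incomplete.
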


\begin{proof}
	
	As $\phi$ and $\theta$ are bijections, it is enough to show the inclusions $\subseteq$ in order to prove the equalities.
	
	We prove (i) and (iii) simultaneously. 
	
	Take an element in $\OP^R(m,n)^k$, i.e. an ordered multiset partition whose rightmost block does not contain a zero. So all the letters in the rightmost block will be removed during the deletion phase of the algorithm. When the last letter of the rightmost block gets deleted, the number of blocks goes down and the dinv \emph{and} the maj goes down by $0$ units. Indeed, the rightmost gap is labelled $0$ in both the insertion methods of dinv and maj.
	
	Starting from $0\mid 0\mid \cdots \mid 0$ and inserting the nonzero letters, a necessary and sufficient condition to obtain a path whose rightmost block does not contain a zero, is that at some point we must have a block-creating insertion in the rightmost gap. Since in the inv insertion, the rightmost gap is labelled $0$, this means that we must have recorded a block-creating deletion with $0$ change of the statistic. By the argument above, this is always the case when starting from an element in $\OP^R(m,n)^k$. 
	
	The argument for (ii) is very similar. Starting from an element in $\OP^L(m,n)^k$, we will delete all the letters in the leftmost block. Let $h+1$ be the number of blocks right before we delete the last letter, call it $i$, in this block. By deleting $i$, the number of blocks goes down by $1$ and the dinv goes down by the largest value possible, $h$, or the number of blocks in the resulting partition. We use the fact that, in our algorithm, the number of blocks right before deleting $i$ is always equal to the number of blocks directly after inserting $i$. This implies that when we insert $i$, the fact that this must create a block and $h$ inv, forces us to create a new block containing only $i$ in the gap to the left of all the $h$ existing blocks. Thus, the obtained partition  will not contain a zero in its leftmost block.    
\end{proof}

\section{Reductions}

Combining Theorem~\ref{thm:Carlitz_bijections} with Lemma~\ref{lem:restrictions}, we get immediately the following corollary.
\begin{corollary} \label{cor:equivalences}
	We have
	\begin{equation}
		\sum_{\pi \in \OP^R(m,n)^{k}} q^{\inv(\pi)} x^\pi  = \sum_{\pi \in \OP^R(m,n)^{k}} q^{\dinv(\pi)} x^\pi  =
		\sum_{\pi \in \OP^R(m,n)^{k}} q^{\maj(\pi)} x^\pi .
	\end{equation}
\end{corollary}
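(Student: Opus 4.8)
The plan is to deduce the corollary directly from Theorem~\ref{thm:Carlitz_bijections} and Lemma~\ref{lem:restrictions}, the only extra input being that the bijections $\phi$ and $\theta$ preserve the monomial $x^\pi$.

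First I would record the relevant observation about the construction of $\phi$ and $\theta$: both maps are obtained by deleting the nonzero letters of $\pi$ one at a time and then reinserting $\alpha_1$ copies of $1$, then $\alpha_2$ copies of $2$, and so on, where $(m,\alpha)$ is the type of $\pi$. Hence $\phi(\pi)$ and $\theta(\pi)$ have the same type as $\pi$, and since $x^\pi = x_1^{\alpha_1}\cdots x_l^{\alpha_l}$ depends only on the type, we get $x^{\phi(\pi)} = x^{\theta(\pi)} = x^\pi$ for every $\pi \in \OP(m,n)^k$.

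Now I would combine this with Lemma~\ref{lem:restrictions}. Part (i) of that lemma says $\phi$ restricts to a bijection of $\OP^R(m,n)^k$ onto itself, while Theorem~\ref{thm:Carlitz_bijections} gives $\inv(\phi(\pi)) = \dinv(\pi)$; reindexing the sum over $\OP^R(m,n)^k$ along $\pi \mapsto \phi(\pi)$ then yields
\[
\sum_{\pi \in \OP^R(m,n)^k} q^{\dinv(\pi)} x^\pi = \sum_{\pi \in \OP^R(m,n)^k} q^{\inv(\phi(\pi))} x^{\phi(\pi)} = \sum_{\pi \in \OP^R(m,n)^k} q^{\inv(\pi)} x^\pi.
\]
Similarly, part (iii) says $\theta$ restricts to a bijection of $\OP^R(m,n)^k$ onto itself with $\inv(\theta(\pi)) = \maj(\pi)$, so the same reindexing gives
\[
\sum_{\pi \in \OP^R(m,n)^k} q^{\maj(\pi)} x^\pi = \sum_{\pi \in \OP^R(m,n)^k} q^{\inv(\pi)} x^\pi.
\]
Chaining the two displayed identities gives the three claimed equalities.

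I do not expect any real obstacle here: the only point requiring a comment is that $\phi$ and $\theta$ do not change the type (equivalently, fix $x^\pi$), which is immediate from their definition, and the rest is a formal reindexing. All the substance has already been absorbed into Theorem~\ref{thm:Carlitz_bijections} (Wilson's insertion bijections) and Lemma~\ref{lem:restrictions} (compatibility of those bijections with the condition of having no zero in the rightmost block). Note that part (ii) of Lemma~\ref{lem:restrictions} plays no role in this particular corollary; it is presumably reserved for the analogous statement over $\OP^L(m,n)^k$.
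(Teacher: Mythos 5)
Your proof is correct and matches the paper's approach exactly: the paper deduces the corollary immediately from Theorem~\ref{thm:Carlitz_bijections} and Lemma~\ref{lem:restrictions}, and you have simply spelled out the reindexing argument plus the (genuinely worth noting, if routine) fact that $\phi$ and $\theta$ preserve the type and hence $x^\pi$. Nothing to add.
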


Combining the previous corollary with Proposition~\ref{prop:xi_map_OPR_dinv}, Proposition~\ref{prop:xi_map_OPR_area} and Remark~\ref{rem:qt_symmetry}, we get immediately the following equivalence.

\begin{theorem}
	For $m,n,k\in \mathbb{N}$, $m\geq 0$ and $n>k\geq 0$,
	\begin{equation} \label{eq:key} 
		\left.\Delta_{h_{m}} \Delta'_{e_{n-k-1}} e_{n}\right|_{t=0} = \PLD_{\underline{x},q,0}(m,n)^{\ast k}
	\end{equation}
	if and only if
	\[ \left.\Delta_{h_{m}} \Delta'_{e_{n-k-1}} e_{n}\right|_{q=0} = \PLD_{\underline{x},0,t}(m,n)^{\ast k}.\]
\end{theorem}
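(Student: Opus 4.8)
The plan is to establish the ``if and only if'' by showing that both sides of the claimed identities are governed by the same generating function over $\OP^R(m,n)^k$, so that the two statements become literally the same equation after applying the $q \leftrightarrow t$ symmetry of $\Delta_{h_m}\Delta'_{e_{n-k-1}}e_n$. The key observation is that on the combinatorial side, the restriction to $\area = 0$ (resp. $\dinv = 0$) forces the decorated partially labelled Dyck paths to have a very rigid shape, which is exactly what Propositions~\ref{prop:xi_map_OPR_dinv} and~\ref{prop:xi_map_OPR_area} exploit: the area-$0$ paths in $\PLD(m,n)^{\ast n-k}$ are in dinv-preserving bijection $\xi$ with $\OP^R(m,n)^k$, and the dinv-$0$ paths in $\PLD(m,n)^{\ast n-k}$ are in area-to-maj bijection $\eta$ with the same set $\OP^R(m,n)^k$.

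First I would unwind the notational point that $\PLD(m,n)^{\ast k}$ restricted to $\area = 0$ really means summing $q^{\dinv(D)} x^D$ over the area-$0$ paths, and that such paths necessarily have $n - (\text{number of diagonal touches} - m - 1)$ decorated rises; matching the statement of the theorem, the relevant paths live in $\PLD(m,n)^{\ast n-k}$ (the subtlety is that the decoration count in the theorem is $k$ while in Proposition~\ref{prop:xi_map_OPR_dinv} it is $n-k$, so I should be careful to state the equivalence for the generating function of $\PLD(m,n)^{\ast k}$ at $q$ or $t$ equal to $0$, and track how the decoration index becomes $n-k$ on the OSP side; this bookkeeping is exactly what Section~4's indexing conventions are set up to handle). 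Using Proposition~\ref{prop:xi_map_OPR_dinv} together with the standardization discussion, I get
\[
\PLD_{\underline{x},q,0}(m,n)^{\ast k} = \sum_{\pi \in \OP^R(m,n)^{n-k}} q^{\dinv(\pi)} x^\pi,
\]
and using Proposition~\ref{prop:xi_map_OPR_area} likewise
\[
\PLD_{\underline{x},0,t}(m,n)^{\ast k} = \sum_{\pi \in \OP^R(m,n)^{n-k}} t^{\maj(\pi)} x^\pi.
\]
Then Corollary~\ref{cor:equivalences} says these two generating functions coincide as polynomials (after renaming the formal variable), since $\dinv$ and $\maj$ are equidistributed with the same monomial weights over $\OP^R(m,n)^{n-k}$.

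Finally I would invoke the $q \leftrightarrow t$ symmetry of $\Delta_{h_m}\Delta'_{e_{n-k-1}}e_n$ (Remark~\ref{rem:qt_symmetry}): specializing a $q,t$-symmetric symmetric function at $t=0$ and then at $q=0$ yields the same polynomial up to interchanging the surviving variable. Concretely, $\left.\Delta_{h_m}\Delta'_{e_{n-k-1}}e_n\right|_{t=0}$ and $\left.\Delta_{h_m}\Delta'_{e_{n-k-1}}e_n\right|_{q=0}$ are obtained from each other by swapping $q \leftrightarrow t$. Combining this with the equality of the two OSP generating functions just established, the identity \eqref{eq:key} holds if and only if its $q \leftrightarrow t$ image does, which is precisely the $q=0$ statement. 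I expect no serious obstacle here: all the hard work (the equidistribution via Wilson's insertion maps and the restriction lemma, plus the geometric analysis of area-$0$ and dinv-$0$ paths) has already been done in the earlier sections, so the main thing to get right is the careful matching of the decoration index $k \leftrightarrow n-k$ and making sure the monomial weights $x^D$ and $x^\pi$ correspond under $\xi$ and $\eta$ — a routine but necessary verification.
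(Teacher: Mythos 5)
Your proposal follows exactly the paper's proof: specialize via Propositions~\ref{prop:xi_map_OPR_dinv} and~\ref{prop:xi_map_OPR_area} to identify both combinatorial generating functions with sums over $\OP^R(m,n)^{n-k}$ (for $\dinv$ and $\maj$ respectively), apply Corollary~\ref{cor:equivalences} to equate them, and conclude via the $q\leftrightarrow t$ symmetry of Remark~\ref{rem:qt_symmetry}. You also correctly handle the $k \leftrightarrow n-k$ bookkeeping for the decoration index, so the argument is complete and matches the paper's route.
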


So, in order to prove both cases $q=0$ and $t=0$ of the generalized Delta conjecture, it will be enough to prove \eqref{eq:key}: this is what we will do in the rest of this article.

\section{Combinatorial recursions}
Let  \[ \OPi^L_{\underline{x},q}(m,n)^{k} \coloneqq \sum_{\pi \in \OP^L(m,n)^{k}} q^{\inv(\pi)} x^\pi = \sum_{\pi \in \SOP^L(m,n)^{k}} q^{\inv(\pi)} Q_{\ides(\sigma(\pi))} \] be the $q$-enumerator of $\OP^L(m,n)^{k}$ for the $\inv$ statistic. 

\begin{remark}
	Extending in the obvious way the map $\xi^{-1}:\OP^R(m,n)^k\to \PLD(m,n)^{\ast n-k}$ from Proposition~\ref{prop:xi_map_OPR_dinv} to a map $\widetilde{\xi}^{-1}$ from the all set $\OP(m,n)^k$, it is easy to see that $\OPi^L_{\underline{x},q}(m,n)^{k}=\sum_{P\in S_0}q^{\dinv(P)}x^P$, where $S_0$ is the set of Dyck paths of size $m+n$ of area $0$, labelled with nonnegative integers so that the labels are strictly increasing along the columns, with the restriction that the rightmost column does not contain a $0$ (for the partially labelled Dyck paths the restriction was on the leftmost column), with $m$ labels equal to $0$ and $m+k$ columns; for these objects the corresponding monomial and the dinv are defined analogously as for the partially labelled Dyck paths. 
	
	By a standard argument using LLT polynomials (see for example the argument sketched in \cite{DAdderio-Iraci-VandenWyngaerd-Delta-Square}*{Remark~3.13}), it is easy to see that our sum over $S_0$ is indeed a symmetric function, therefore $\OPi^L_{\underline{x},q}(m,n)^{k}$ is a symmetric function as well.
\end{remark}

\begin{lemma} \label{lem:opl_inv}
	For $1\leq j <n$, $1\leq k\leq n$ and $m\geq 0$
	\begin{align*}
		h_j^\perp \OPi^L_{\underline{x},q}(m,n)^{k} & = \sum_{s=0}^{k} q^{\binom{j-s}{2}} \qbinom{m}{j-s}_q \qbinom{m+s}{s}_q \OPi^L_{\underline{x},q}(m+s,n-j)^{k-s} \\
		& + \sum_{s=0}^{k} q^{\binom{j-s}{2}} \qbinom{m}{j-s}_q q^m \qbinom{m+s-1}{s-1}_q \\
		& \times \sum_{r=1}^{m+s} \left(\OPi^L_{\underline{x},q}(m+s-r,n-j)^{k-s} + \OPi^L_{\underline{x},q}(m+s-r,n-j)^{k-s+1}\right)
	\end{align*} 	
	and \[ \OPi^L_{\underline{x},q}(m,0)^{k} = \delta_{k,0} \delta_{m,0}. \] 
\end{lemma}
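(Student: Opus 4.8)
The plan is to prove the base case directly from the definition and then establish the recursion by translating $h_j^\perp$ into a combinatorial operation and carrying out a deletion/insertion analysis on ordered multiset partitions (equivalently, on the area-$0$ labelled Dyck paths $S_0$ of the remark preceding the statement).

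For the base case: an element counted by $\OPi^L_{\underline{x},q}(m,0)^{k}$ is an ordered multiset partition of $\{0^{m}\}$ into exactly $m+k$ (nonempty) blocks whose leftmost block contains no $0$. Since $0$ is the only available letter, each block must equal $\{0\}$; this forces $m+k=m$ and puts a $0$ in the leftmost block, which is impossible unless $m=k=0$. In that case the unique object is the empty partition, with $\inv=0$ and $x^{\pi}=1$, so $\OPi^L_{\underline{x},q}(m,0)^{k}=\delta_{k,0}\delta_{m,0}$.

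For the recursion, I would first use the standard identity $h_{j}^{\perp}F[X]=F[X+z]\big|_{z^{j}}$ together with the fact (recalled in the remark preceding the lemma) that $\OPi^L_{\underline{x},q}(m,n)^{k}$ is a symmetric function, to get
\[
h_{j}^{\perp}\OPi^L_{\underline{x},q}(m,n)^{k}=\sum_{\pi}q^{\inv(\pi)}x^{\pi},
\]
where $\pi$ runs over the elements of $\OP^L(m,n)^{k}$ in which the largest label occurs exactly $j$ times and $x^{\pi}$ records every label except those $j$ top occurrences (so $x^{\pi}$ has degree $n-j$). Think of these $j$ top copies as formal maximal letters $\bullet$, invisible to the monomial, sitting in $j$ distinct blocks. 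The heart of the proof is then to delete these $\bullet$'s and describe the fibres. The crucial observation is that, since $\bullet$ is maximal and its copies lie in distinct blocks, deleting one $\bullet$ affects neither the ``minimal in its block'' status of any surviving letter nor any (left-of, distinct-block) pair among surviving letters; hence $\inv$ drops by exactly the number of blocks to the right of the deleted $\bullet$ that are not $\bullet$-singletons, and the block count drops precisely when the deleted $\bullet$ was alone in its block. One then splits the $j$ bullets into those inserted among the $m$ blocks containing a $0$ and the remaining $s$ of them: the first kind contribute the factor $e_{j-s}[[m]_{q}]=q^{\binom{j-s}{2}}\qbinom{m}{j-s}_{q}$; the remaining $s$ bullets, inserted among the other blocks, contribute $\qbinom{m+s}{s}_{q}$ when no $\bullet$-singleton is created at the far left (so that after deletion we land in $\OP^L(m+s,n-j)^{k-s}$, giving the first sum), and $\qbinom{m+s-1}{s-1}_{q}$ otherwise; in the latter case one converts $\bullet$-singleton blocks into $0$-singleton blocks, which changes the number of zeros from $m+s$ down to some $m+s-r$ and — via the usual relation $\Delta_{e_{k-s}}=\Delta'_{e_{k-s-1}}+\Delta'_{e_{k-s}}$ on the appropriate degree, equivalently the corresponding block-count split — produces the inner sum $\sum_{r=1}^{m+s}$ with both superscripts $k-s$ and $k-s+1$ and the overall power $q^{m}$ in the second sum. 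Summing over all bullet configurations yields the stated identity.

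I expect the main obstacle to be exactly this last bookkeeping: partitioning the bullet-configurations so that the $q$-weights assemble into the precise $q$-binomials and powers of $q$ in the statement — in particular isolating the block-absorption mechanism responsible for the inner sum and the factor $q^{m}$, and tracking the $\OP^L$ boundary condition responsible for the alternation between $\OPi^L(\,\cdot\,,n-j)^{k-s}$ and $\OPi^L(\,\cdot\,,n-j)^{k-s+1}$. The shape of the blue formula \eqref{blue_formula} from Section~\ref{sec: symfun} provides a useful template for organizing this computation.
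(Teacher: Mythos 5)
Your overall strategy — interpret $h_j^\perp$ combinatorially and analyze what happens when $j$ distinguished labels are removed from a partition in $\OP^L(m,n)^k$ — matches the paper's, and your base-case argument is correct and identical. But you introduce the $j$ distinguished labels as \emph{maximal} formal letters $\bullet$ via $F[X+z]|_{z^j}$, whereas the paper uses the $j$ \emph{smallest} positive labels $1,\dots,j$ (``small cars'') and the condition that they appear in decreasing order in the reading word $\sigma(\pi)$. This is not a cosmetic difference, and I think it breaks your bookkeeping. The paper's dichotomy is: a small car is in a $0$-block iff it is not the minimum of its block; hence small cars in $0$-blocks are \emph{deleted} (the block persists, $m$ unchanged), while the $s$ small cars in $0$-free blocks are minimal and can be \emph{converted} to $0$'s (the block persists, $m\to m+s$). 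Both operations preserve the block count, which is exactly why the target is $\OP^L(m+s,n-j)^{k-s}$. With maximal bullets this coherence is lost: a $\bullet$ sitting in a $0$-free block together with other positives (e.g.\ $\{a,\bullet\}\mid\{0\}$ with $a<\bullet$) is neither a singleton nor in a $0$-block, so deleting it leaves $m$ unchanged and the result lands in $\OP^L(m,n-j)^k$ — the $s=0$ term — yet your decomposition files it under the ``remaining $s$ bullets'' with target $\OP^L(m+s,n-j)^{k-s}$. Relatedly, your claimed $q$-weight $e_{j-s}[[m]_q]$ for bullets in $0$-blocks does not follow from your (correct) observation that deleting $\bullet$ drops $\inv$ by the number of non-$\bullet$-singleton blocks to its right: this count is over \emph{all} such blocks, not over $0$'s, so the weights do not range over $\{0,\dots,m-1\}$ in the way needed. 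The paper's small cars avoid all of this because the $\inv$ contribution of a deleted small car is literally ``number of $0$'s to its right'', and these values are automatically distinct and lie in $\{0,\dots,m-1\}$. You flagged the ``last bookkeeping'' as the main obstacle, and indeed that is precisely where your plan breaks down; to make your max-letter version go through you would need a genuinely different case split (by whether $\bullet$ is a singleton, not by whether its block contains a $0$) and a separate analysis of the $\inv$ change under the conversion $\bullet\to 0$, neither of which you supply.
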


\begin{proof}
	We will show this combinatorial recursion for the (finite) set $\SOP^L(m,n)^{k}$. 
	
	Let us start with the second identity. The set $\SOP^L(m,0)^{k}$ consists of the standard ordered set partitions with $m$ $0$'s, $m+k$ blocks, and no non-zero elements, with no $0$ in the leftmost block. If $m > 0$ then there must be a $0$ in the leftmost block, so the set is empty; if $k > 0$ then there must be a block containing a non-zero element, but there are none. It follows that the set is non-empty (and consists of the empty partition alone) if and only if $m = k = 0$. Recalling that $\inv(\varnothing) = 0$, we get the desired initial conditions.
	
	When applying $h_j^\perp$ we should only consider set partitions $\pi$ such that $1,2,\dots,j$ appear in $\sigma(\pi)$ in decreasing order, which means that no pair of these labels form an inversion in $\pi$ (as $h_j^\perp$ acts as $0$ on the complement). Let us call \emph{small cars} these $j$ labels.
	
	Let $s$ be the number of small cars that are the minimum of their block (equivalently, whose block does not contain a $0$). The recursive step consists of removing the small cars if their block contains a $0$, and turning them into $0$'s otherwise; next we standardize the resulting partition. If in this way we get a partition that contains a $0$ in its leftmost block, then we delete $0$'s from the leftmost blocks until it does not anymore. See Example~\ref{ex:blue_recursion}.
	
	First we remove the $j-s$ small cars from blocks that also contain $0$'s. The contribution of each of these labels is equal to the number of $0$'s in blocks to its right, and since their block also contains a $0$, these contributions must be different numbers ranging from $0$ to $m-1$, which are $q$-counted by $q^{\binom{j-s}{2}} \qbinom{m}{j-s}_q$.
	
	Now we have to distinguish two cases. If the leftmost block does not contain a small car, then the contributions of the small cars that are the minimum of their block depend only on their interlacing with the $0$'s, and these are taken into account by the factor $\qbinom{m+s}{s}_q$. If the leftmost block does contain a small car, then its contribution must be exactly $q^m$; the contribution of the remaining $s-1$ small cars is taken into account by the factor $\qbinom{m+s-1}{s-1}_q$.
	
	After the standardization, in the former case we are left with a partition in $\OP^L(m+s,n-j)^{k-s}$ and we are done. In the latter case, say that the resulting partition starts with $r$ consecutive $0$ singletons followed by a block whose minimum is non-zero, or $r-1$ $0$ singletons followed by a non-singleton block whose minimum is $0$ (in both cases $r \geq 1$, because it had a small car in its leftmost block). Then we remove these $0$'s, and we are left with a partition in $\OP^L(m+s-r,n-j)^{k-s}$ or $\OP^L(m+s-r,n-j)^{k-s+1}$ depending whether the rightmost of these $0$'s is a singleton or not (if it is, the partition loses $r$ blocks; otherwise, it loses $r-1$).
	
	This completes the proof.
\end{proof}

\begin{example} \label{ex:blue_recursion}
	Let $\pi = 2 \mid 0 \mid 9 7 3 \mid 1 0 \mid 8 6 \mid 5 0 \mid 4$ and $j=4$. First we delete all the small cars that lie in the same block of some $0$, getting $2 \mid 0 \mid 9 7 3 \mid 0 \mid 8 6 \mid 5 0 \mid 4$ ($\inv$ decreases by $1$). Then we turn the other small cars to $0$'s, getting $0 \mid 0 \mid 9 7 0 \mid 0 \mid 8 6 \mid 5 0 \mid 0$ ($\inv$ decreases by $5$). Next we delete the starting $0$'s, getting $9 7 \mid 0 \mid 8 6 \mid 5 0 \mid 0$ ($\inv$ doesn't change). Finally we standardize, getting $5 3 \mid 0 \mid 4 2 \mid 1 0 \mid 0$ ($\inv$ doesn't change). This completes the recursive step.
\end{example}



Let  \[ \OPd^R_{\underline{x},q}(m,n)^{k} \coloneqq \sum_{\pi \in \OP^R(m,n)^{k}} q^{\dinv(\pi)} x^\pi =  \sum_{\pi \in \SOP^R(m,n)^{k}} q^{\dinv(\pi)} Q_{\ides(\sigma(\pi))} \] be the $q$-enumerator for the $\dinv$ statistic. 
\begin{remark}
	Using Proposition~\ref{prop:xi_map_OPR_dinv}, we know that 
	\[\OPd^R_{\underline{x},q}(m,n)^{k} = \left. \PLD_{\underline{x},q,t}(m,n)^{\ast n-k} \right|_{t=0}.\]
	As it is known that $\PLD_{\underline{x},q,t}(m,n)^{\ast n-k}$ is a symmetric function (see for example a similar argument sketched in \cite{DAdderio-Iraci-VandenWyngaerd-Delta-Square}*{Remark~3.13}), this shows that $\OPd^R_{\underline{x},q}(m,n)^{k}$ is a symmetric function as well.
\end{remark}

The following lemma shows that $\OPd^R_{\underline{x},q}(m,n)^{k}$ satisfies the red formula \eqref{red_formula} as ${ }^qC_{n,k}^{(m)}$ does in Theorem~\ref{thm:red_blue_formulas}.

\begin{lemma} \label{lem: recursion opr_dinv}
	For $1\leq j <n$, $1\leq k\leq n$ and $m\geq 0$
	\begin{align*}
		h_j^\perp \OPd^R_{\underline{x},q}(m,n)^{k} & = \sum_{s=0}^{k} q^{\binom{j-s}{2}} \qbinom{m}{j-s}_q \qbinom{m+s}{s}_q q^s \OPd^R_{\underline{x},q}(m+s,n-j)^{k-s} \\
		& + \sum_{s=0}^{k} q^{\binom{j-s}{2}} \qbinom{m}{j-s}_q \qbinom{m+s-1}{s-1}_q \\
		& \times \sum_{r=1}^{m+s} \left(\OPi^L_{\underline{x},q}(m+s-r,n-j)^{k-s} + \OPi^L_{\underline{x},q}(m+s-r,n-j)^{k-s+1}\right)		
	\end{align*}
	and \[ \OPd^R_{\underline{x},q}(m,0)^{k} = \delta_{k,0} \delta_{m,0}. \]
\end{lemma}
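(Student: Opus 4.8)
The strategy mirrors the proof of Lemma~\ref{lem:opl_inv}: I would establish this recursion directly on the combinatorial side, working with the finite set $\SOP^R(m,n)^{k}$ (rather than $\OP^R$ itself, using the symmetric function reduction to standardized partitions recalled in Section~4), and reading off the $\dinv$ statistic. The initial condition $\OPd^R_{\underline{x},q}(m,0)^{k} = \delta_{k,0}\delta_{m,0}$ is identical to the one in Lemma~\ref{lem:opl_inv}: with no nonzero letters, having $m+k$ blocks and $m$ zeros forces every block to be a zero singleton, and the ``no zero in the rightmost block'' condition then forces the set to be empty unless $m=k=0$, in which case it consists of the empty partition with $\dinv=0$.

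For the main recursion, applying $h_j^\perp$ again restricts to partitions $\pi$ in which the labels $1,2,\dots,j$ (the \emph{small cars}) form no diagonal inversion among themselves; in the $\dinv$ setup this means they appear in the correct relative positions with respect to the block-structure reading. The recursive step is again to delete the small cars lying in a block containing a $0$, turn the remaining small cars into $0$'s, and then standardize; but now, because we are in the $\OP^R$ world, if the \emph{rightmost} block becomes a zero-block (or ends up with a leading-zero pattern on the right after creating new singleton blocks) we must strip zeros off the right end. Letting $s$ be the number of small cars that are the minimum of their block, I would bookkeep the $\dinv$ contributions exactly as in Lemma~\ref{lem:opl_inv}: the $j-s$ small cars sharing a block with a $0$ contribute distinct values between $0$ and $m-1$, giving the factor $q^{\binom{j-s}{2}}\qbinom{m}{j-s}_q$; the interlacing of the remaining $s$ small cars with the $0$'s gives $\qbinom{m+s}{s}_q$ when the rightmost block is untouched. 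The key difference producing the red-formula shape is the \emph{rightmost} small-car case: the analogue of the ``$q^m$'' contribution from the leftmost position in Lemma~\ref{lem:opl_inv} is here a $q^s$-type contribution tied to the right end of the path — this is precisely the $\textcolor{red}{q^{k-s}}$-flavoured discrepancy between the blue and red formulas, and it is why this lemma matches \eqref{red_formula} while Lemma~\ref{lem:opl_inv} matches \eqref{blue_formula}. After standardization, a partition with an untouched rightmost block lands in $\OP^R(m+s,n-j)^{k-s}$ (contributing the $q^s\,\OPd^R$ term); one whose right end must be trimmed of $r$ zeros lands in $\OP(m+s-r,n-j)^{k-s}$ or $^{k-s+1}$ depending on whether the last stripped zero was a singleton, but — crucially — after trimming the \emph{left} restriction becomes the operative one (a path that had its rightmost block emptied has no constraint left on that end except via the original leftmost-column condition), which is why the second and third sums feature $\OPi^L$ rather than $\OPd^R$.

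I would record a worked example in the style of Example~\ref{ex:blue_recursion} to make the zero-stripping on the right end transparent, then verify term-by-term that the three groups of summands reproduce exactly the right-hand side of the claimed identity, matching indices with the red formula~\eqref{red_formula}.

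\textbf{Main obstacle.} The delicate point is the precise $\dinv$-accounting for the small car occupying the rightmost relevant block: one must check that deleting or zeroing it, together with the subsequent standardization and right-end zero-stripping, shifts $\dinv$ by exactly the amount encoded in the $q^{k-s}$ factors of \eqref{red_formula}, and that the block-count changes ($k-s$ versus $k-s+1$) are assigned correctly according to whether the final stripped zero is a singleton. Getting the interplay between the ``$\OP^R$'' constraint before the step and the ``$\OP^L$'' constraint after it exactly right — so that the recursion closes on the pair $(\OPd^R, \OPi^L)$ in the same way the symmetric-function recursion of Theorem~\ref{thm:red_blue_formulas} closes — is the real content; the rest is the same bookkeeping as in Lemma~\ref{lem:opl_inv}.
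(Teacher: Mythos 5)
The proposal follows the paper's high-level strategy — induct via the finite sets $\SOP^R(m,n)^k$, introduce \emph{small cars} $1,\dots,j$ with no diagonal inversions among themselves, delete those sharing a block with a $0$, turn the remaining ones into $0$'s, standardize, and split on whether the rightmost block is touched — and the initial-condition argument is correct. But there are two concrete points where the proposal diverges from what actually happens, and they matter.

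First, the mechanism for why $\OPi^L$ appears in the second sum is not ``strip zeros off the right end'' nor ``a path that had its rightmost block emptied has no constraint left on that end.'' In the paper's argument, when the rightmost block contains a rank-$1$ small car, after the small car becomes a $0$ one \emph{removes that $0$ and moves the rest of that block to the leftmost position}. The remaining nonzero elements of that block (if any) are all $>j$, so the new leftmost block has no $0$, landing in $\OP^L(m+s-1,n-j)^{k-s+1}$; if the block was a singleton it disappears, landing in $\OP(m+s-1,n-j)^{k-s}$, which is then peeled into $\OP^L$ pieces by stripping $0$'s \emph{from the left}. Your description has the wrong end being stripped, and omits the block move that produces the $\OP^L$ restriction. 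You also need to note that this operation drops $\dinv$ by exactly $k-s$, and — crucially — that you then invoke the $\dinv/\inv$ equidistribution from Theorem~\ref{thm:Carlitz_bijections} and Lemma~\ref{lem:restrictions} to trade the $\dinv$ enumerator for the $\inv$ enumerator $\OPi^L$. Without the block move and the equidistribution step, the appearance of $\OPi^L$ in the recursion is not justified.

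Second, your bookkeeping of the power of $q$ is misattributed: you identify the $q^s$ in the first sum as ``the analogue of the $q^m$ from the leftmost position... tied to the rightmost small-car case,'' but the $q^s$ sits in the branch where the rightmost block contains \emph{no} small car (the $\OPd^R$ term), not in the rightmost-small-car branch; the ``$q^m$'' contribution of the rightmost small car belongs to the second sum, and is absorbed there together with the $q^{k-s}$ decrement into the $\OPi^L$ terms. Calling the $q^s$ ``precisely the $q^{k-s}$-flavoured discrepancy'' conflates two different factors sitting in two different sums. To make the proof close, you need to track precisely which $q$-power attaches to which branch and verify it against the lemma's two sums; as written, the accounting would not check out.
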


\begin{proof}	
	We will show the combinatorial recursion for the (finite) set $\SOP^R(m,n)^{k}$. Let us start with the second identity. The set $\SOP^R(m,0)^{k}$ consists of the ordered set partitions with $m$ $0$'s, $m+k$ blocks, and no non-zero elements, with no $0$ in the rightmost block. As before, the set is non-empty (and consists of the empty partition alone) if and only if $m = k = 0$. Recalling that $\dinv(\varnothing) = 0$, we get the desired initial conditions.
	
	When applying $h_j^\perp$ we should only consider set partitions $\pi$ such that $1,2,\dots,j$ appear in $\sigma(\pi)$ in decreasing order, which means that no pair of these labels form a diagonal inversion in $\pi$ (as $h_j^\perp$ acts as $0$ on the complement). Let us call \emph{small cars} these $j$ labels.
	
	Let once again $s$ be the number of small cars that are the minimum of their block. The recursive step consists of removing the small cars if their block contains a $0$, turning them into $0$'s otherwise, and then standardizing the resulting partition. If in this way we get a partition that contains a $0$ in its rightmost block, then we will conclude using a different argument.
	
	First we remove the $j-s$ small cars from blocks that also contain $0$'s. The contribution of each of these small cars is equal to the number of $0$'s in blocks to its left, and since their block also contains a $0$, these contributions must be different numbers ranging from $0$ to $m-1$, which are $q$-counted by $q^{\binom{j-s}{2}} \qbinom{m}{j-s}_q$.
	
	Now we have to distinguish two cases. If the rightmost block does not contain a small car, then the contributions of the small cars that are the minimum of their block depend only on their interlacing with the $0$'s, and these are taken into account by the factor $\qbinom{m+s}{s}_q$. If the rightmost block does contain a small car, then its contribution must be exactly $q^m$; the contribution of the remaining $s-1$ small cars is taken into account by the factor $\qbinom{m+s-1}{s-1}_q$.
	
	After standardizing, in the former case we're left with a partition in $\OP^R(m+s,n-j)^{k-s}$ and we are done. In the latter case, we remove the resulting $0$ from the rightmost block, and then we move the whole block to the leftmost position, getting a partition in $\OP^L(m+s-1,n-j)^{k-s}$ if that block was not a singleton, and a partition in $\OP(m+s-1,n-j)^{k-s}$ if it were. In both cases, the $\dinv$ decreases by exactly $k-s$ (which is the number of blocks in the resulting partition whose minimum is not a $0$): if the rightmost block was not a singleton, then the primary (resp. secondary) dinv involving the non-zero elements of that block becomes secondary (resp. primary) dinv when we move the block to the left, and the only lost contribution is the one coming from the deleted $0$. Now, by Theorem~\ref{thm:Carlitz_bijections} and Lemma~\ref{lem:restrictions} we know that in $\OP^L(m+s,n-j)^{k-s}$ and in $\OP(m+s-1,n-j)^{k-s}$ the statistics $\dinv$ and $\inv$ have the same distribution, so we can use the $q$-enumerator with respect to $\inv$ instead.
	
	We also have that
	\begin{align*}
		\SOP(m+s-1,n-j)^{k-s} & = \SOP^L(m+s-1,n-j)^{k-s} \\ & \sqcup \bigsqcup_{r=2}^{m+s} \left( \SOP^L(m+s-r,n-j)^{k-s} \sqcup \SOP^L(m+s-r,n-j)^{k-s+1} \right)
	\end{align*} (up to a natural statistic-preserving bijection) by deleting all the consecutive $0$ singletons on the left, and possibly the $0$ from the leftmost non-$0$-singleton block, if there is any. Notice that this operation does not change the $\inv$.
	
	It follows that the remaining factor is \[ \sum_{r=1}^{m+s} \left(\OPi^L_{\underline{x},q}(m+s-r,n-j)^{k-s} + \OPi^L_{\underline{x},q}(m+s-r,n-j)^{k-s+1}\right) \] 
	as claimed.
	
	This completes the proof.
\end{proof}

\begin{example}
	Let $\pi = 2 \mid 0 \mid 9 7 3 \mid 1 0 \mid 8 6 \mid 5 0 \mid 4$ and $j=4$. First we delete all the small cars that lie in the same block of some $0$, getting $2 \mid 0 \mid 9 7 3 \mid 0 \mid 8 6 \mid 5 0 \mid 4$ ($\dinv$ decreases by $1$). Then we turn the other small cars to $0$'s, getting $0 \mid 0 \mid 9 7 0 \mid 0 \mid 8 6 \mid 5 0 \mid 0$ ($\dinv$ decreases by $5$). Next we delete the rightmost $0$'s, which is a singleton, getting $0 \mid 9 7 0 \mid 0 \mid 8 6 \mid 5 0 \mid 0$ ($\dinv$ decreases by $1$). Now we standardize and then apply the bijection mapping $\dinv$ to $\inv$, and in the image we delete the starting (leftmost) $0$'s, as we did in the previous recursion. This completes the recursive step. 
	
	If the rightmost block was not a singleton small car, say if $\pi = 9 2 \mid 0 \mid 7 3 \mid 1 0 \mid 8 6 \mid 5 0 \mid 4$, then while deleting it we also move the rest of the block to the leftmost position, and at that step we get $0 \mid 7 0 \mid 0 \mid 8 6 \mid 5 0 \mid 0 \mid 9$ ($\dinv$ decreases by the same amount as before), and now we know that the resulting partition does not have a $0$ in its leftmost block, so we are already done.
\end{example}

\section{Main results}
In this section we prove the main results of this work.

\begin{theorem}
	\label{thm: recursion opl_inv}
	$\OPi^L_{\underline{x},q}(m,n)^{k} = q^m \cdot{ }^qC_{n,k}^{(m)}$.
\end{theorem}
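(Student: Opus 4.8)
The plan is to prove the identity by induction on $n$, exploiting the fact that a symmetric function $f\in\Lambda^{(n)}$ is completely determined by the collection $h_1^\perp f,\dots,h_n^\perp f$: if $h_j^\perp f=h_j^\perp g$ for all $1\le j\le n$ then $\langle f-g,h_\lambda\rangle=(h_{\lambda_1}^\perp\cdots h_{\lambda_\ell}^\perp)(f-g)=0$ for every $\lambda\vdash n$, hence $f=g$. Here $h_j^\perp$ lowers degree by $j$, so for $j<n$ it lands in $\Lambda^{(n-j)}$, while $h_n^\perp f\in\Lambda^{(0)}$ is the scalar $\langle f,h_n\rangle$, equal to the coefficient of $m_{(n)}$ in $f$. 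Since both $\OPi^L_{\underline{x},q}(m,n)^{k}$ and $q^m\cdot{ }^qC_{n,k}^{(m)}$ lie in $\Lambda^{(n)}$ (the former by the remark preceding the statement), it suffices to show that $h_j^\perp$ sends the two expressions to the same element for every $1\le j\le n$.

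For $1\le j<n$ I would expand $h_j^\perp\OPi^L_{\underline{x},q}(m,n)^{k}$ using Lemma~\ref{lem:opl_inv}, plug in the inductive hypothesis $\OPi^L_{\underline{x},q}(m',n-j)^{k'}=q^{m'}\cdot{ }^qC_{n-j,k'}^{(m')}$ (legitimate because $n-j<n$), and compare the outcome with $q^m$ times the blue formula \eqref{blue_formula} for $h_j^\perp{ }^qC_{n,k}^{(m)}$ from Theorem~\ref{thm:red_blue_formulas}. After reindexing the inner sum of Lemma~\ref{lem:opl_inv} by $r\mapsto m+s-r$, the first sums match term by term (the $s=k$ term being $0$ since ${ }^qC_{n-j,0}^{(\cdot)}=0$); the $s=k$ term of the second sum of Lemma~\ref{lem:opl_inv} reproduces exactly the $\chi(j\ge k)$ summand of \eqref{blue_formula}, using again ${ }^qC_{n-j,0}^{(\cdot)}=0$ and the fact that $\qbinom{m}{j-k}_q$ already vanishes for $j<k$; and the part of the second sum with $0\le s\le k-1$ matches the middle sum of \eqref{blue_formula}, since the terms with $s>j$ disappear as $\qbinom{m}{j-s}_q=0$. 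This yields $h_j^\perp\OPi^L_{\underline{x},q}(m,n)^{k}=q^m\cdot h_j^\perp{ }^qC_{n,k}^{(m)}$ for every $1\le j<n$.

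For $j=n$ both sides become scalars. By Lemma~\ref{lem:j=n} we have $q^m\cdot h_n^\perp{ }^qC_{n,k}^{(m)}=q^{m+\binom{n-k}{2}}\qbinom{m}{n-k}_q\qbinom{m+k-1}{k-1}_q$, while $h_n^\perp\OPi^L_{\underline{x},q}(m,n)^{k}$ is the coefficient of $m_{(n)}$ in $\OPi^L_{\underline{x},q}(m,n)^{k}$, i.e.\ $\sum_\pi q^{\inv(\pi)}$ over the ordered multiset partitions $\pi$ of type $(m,(n))$ (those with $m$ zeros and $n$ copies of the letter $1$) having $m+k$ blocks and no zero in the leftmost block. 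Counting blocks against multiplicities forces such a $\pi$ to consist of $n-k$ blocks $\{1,0\}$, of $k$ blocks $\{1\}$, and of $m-(n-k)$ blocks $\{0\}$, arranged in some order with a $\{1\}$-block first (and there are none when $m<n-k$, in agreement with $\qbinom{m}{n-k}_q=0$); the leftmost $\{1\}$-block contributes $m$ to $\inv(\pi)$, and a routine $q$-enumeration shows that the remaining $m+k-1$ blocks contribute $q^{\binom{n-k}{2}}\qbinom{m}{n-k}_q\qbinom{m+k-1}{k-1}_q$, giving $h_n^\perp\OPi^L_{\underline{x},q}(m,n)^{k}=q^m\cdot h_n^\perp{ }^qC_{n,k}^{(m)}$. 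This also covers the base case $n=1$, where there is no $j<n$ to handle.

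Since $h_j^\perp$ agrees on $\OPi^L_{\underline{x},q}(m,n)^{k}$ and $q^m\cdot{ }^qC_{n,k}^{(m)}$ for all $1\le j\le n$, the two symmetric functions coincide, and the induction closes. I expect the only delicate step to be the bookkeeping in the recursion comparison — aligning summation ranges, carrying out the reindexing, and identifying the $\chi(j\ge k)$ term of \eqref{blue_formula} with the $s=k$ term coming from Lemma~\ref{lem:opl_inv} — together with the explicit $q$-enumeration for $h_n^\perp$; everything genuinely substantive (Lemma~\ref{lem:opl_inv}, the blue formula, and Lemma~\ref{lem:j=n}) is already in hand.
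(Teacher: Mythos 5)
Your proposal is correct and follows essentially the same route as the paper's proof: induction on $n$, comparing $h_j^\perp$-images via Lemma~\ref{lem:opl_inv} against $q^m$ times the blue formula \eqref{blue_formula} for $1\le j<n$, and handling $j=n$ via Lemma~\ref{lem:j=n}. You merely spell out the bookkeeping (the reindexing $r\mapsto m+s-r$, the vanishing of the boundary terms ${ }^qC_{n-j,0}^{(\cdot)}=0$ and $\qbinom{m}{j-s}_q=0$ for $s>j$, and the explicit $q$-count of $\langle\OPi^L_{\underline{x},q}(m,n)^{k},h_n\rangle$) that the paper leaves implicit.
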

\begin{proof}
	We proceed by induction on $n$. The base case $n=0$ (or $n=1$) is straightforward to check.
	
	Let $n \geq 1$. We are going to use that if two symmetric functions $f,g\in \Lambda^{(n)}$ with $n>0$ are such that $h_j^\perp f =h_j^\perp g$ for all $1 \leq j \leq n$, then it is not hard to see that we must have $f=g$ (cf. \cite{Haglund-Rhoades-Shimozono-Advances}*{Lemma~3.6}).
	
	For $j=n$, the expression for $h_n^\perp {}^qC_{n,k}^{(m)}$ from Lemma~\ref{lem:j=n} coincides with the polynomial we get for $\OPi^L_{\underline{x},q}(m,n)^{k}$ from the recursion.
	
	For $1\leq j<n$, comparing Lemma~\ref{lem:opl_inv} and the blue formula \eqref{blue_formula} multiplied by $q^m$, and using the induction on $n$, we conclude that $h_j^\perp \OPi^L_{\underline{x},q}(m,n)^{k} = h_j^\perp q^m\cdot { }^qC_{n,k}^{(m)}$.
	
	Therefore we proved that $h_j^\perp \OPi^L_{\underline{x},q}(m,n)^{k} = h_j^\perp q^m\cdot { }^qC_{n,k}^{(m)}$ for $1\leq j\leq n$, which implies $\OPi^L_{\underline{x},q}(m,n)^{k} = q^m\cdot { }^qC_{n,k}^{(m)}$, completing the proof of the theorem.
\end{proof}

We are finally in a position to prove \eqref{eq:key}.

\begin{theorem}
	\label{thm: recursion opr_dinv}
	$\OPd^R_{\underline{x},q}(m,n)^{k} = { }^qC_{n,k}^{(m)}$ .
\end{theorem}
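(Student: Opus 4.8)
The plan is to mimic the proof of Theorem~\ref{thm: recursion opl_inv}. Both $\OPd^R_{\underline{x},q}(m,n)^{k}$ and ${}^qC_{n,k}^{(m)}$ are symmetric functions homogeneous of degree $n$ (for $\OPd^R$ this is the content of the Remark following its definition, via Proposition~\ref{prop:xi_map_OPR_dinv}), so by the determinacy criterion used in the proof of Theorem~\ref{thm: recursion opl_inv} (two symmetric functions of degree $n>0$ agreeing under all $h_j^\perp$, $1\le j\le n$, are equal) it is enough to show $h_j^\perp\OPd^R_{\underline{x},q}(m,n)^{k}=h_j^\perp{}^qC_{n,k}^{(m)}$ for every $1\le j\le n$. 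I would do this by induction on $n$. The base case is $n=1$ (the only value with $1\le k\le n$ and no admissible $j<n$): here the unique element of $\SOP^R(m,1)^1$ is $0\mid\cdots\mid 0\mid 1$, which has no diagonal inversions, so $\OPd^R_{\underline{x},q}(m,1)^1=e_1=\left.\Delta_{h_m}\Delta_{e_0}'e_1\right|_{t=0}={}^qC_{1,1}^{(m)}$, the middle equality since $\Delta_{e_0}'=\mathrm{id}$ and $h_m[B_{(1)}]=h_m[1]=1$.

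For the inductive step I would fix $n\ge 2$, assume the statement for all smaller values of $n$, and treat $j=n$ and $1\le j<n$ separately. The case $j=n$ should follow directly from Proposition~\ref{prop:xi_map_OPR_dinv} and Lemma~\ref{lem:j=n}: applying $h_n^\perp$ to either side keeps only the terms whose dinv reading word is the identity $1\,2\,\cdots\,n$, and on the combinatorial side (using $\xi$ and the standardization discussion of Section~4) this is precisely the $q^{\dinv}$ generating function of the area-$0$ paths in $\PLD(m,n)^{\ast n-k}$ with dinv reading word $1\,2\,\cdots\,n$, which Lemma~\ref{lem:j=n} identifies with $h_n^\perp{}^qC_{n,k}^{(m)}=q^{\binom{n-k}{2}}\qbinom{m}{n-k}_q\qbinom{m+k-1}{k-1}_q$.

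For $1\le j<n$ I would compare the recursion of Lemma~\ref{lem: recursion opr_dinv} with the red formula \eqref{red_formula} for $h_j^\perp{}^qC_{n,k}^{(m)}$. Substitute the inductive hypothesis $\OPd^R_{\underline{x},q}(m+s,n-j)^{k-s}={}^qC_{n-j,k-s}^{(m+s)}$ into the first sum of Lemma~\ref{lem: recursion opr_dinv}, and substitute Theorem~\ref{thm: recursion opl_inv}, namely $\OPi^L_{\underline{x},q}(m+s-r,n-j)^{k'}=q^{m+s-r}\,{}^qC_{n-j,k'}^{(m+s-r)}$, into the second sum; then reindex the inner sum by $\rho=m+s-r$, turning $\sum_{r=1}^{m+s}$ into $\sum_{\rho=0}^{m+s-1}$ with weight $q^{\rho}$. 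The claim is that after these substitutions the right-hand side of Lemma~\ref{lem: recursion opr_dinv} becomes, term for term, the right-hand side of \eqref{red_formula} (equivalently of the blue formula \eqref{blue_formula}, the two being equal); note in particular that the factor $\qbinom{m}{j-s}_q$ annihilates the terms with $s>j$, which reconciles the range $0\le s\le k$ in Lemma~\ref{lem: recursion opr_dinv} with the range $0\le s\le\min(j,k-1)$ plus the separate $\chi(j\ge k)$ term in \eqref{red_formula}. This would give $h_j^\perp\OPd^R_{\underline{x},q}(m,n)^{k}=h_j^\perp{}^qC_{n,k}^{(m)}$ for all $1\le j\le n$, and then the determinacy criterion yields $\OPd^R_{\underline{x},q}(m,n)^{k}={}^qC_{n,k}^{(m)}$.

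The hard part will be the (purely formal, but fiddly) bookkeeping in the last step: keeping track of the $q$-powers contributed by the explicit $q^s$ appearing in Lemma~\ref{lem: recursion opr_dinv}, by the factor $q^{m+s-r}$ coming from Theorem~\ref{thm: recursion opl_inv}, and by the reindexing, and checking that they reassemble exactly into the weights of \eqref{red_formula}, boundary term included. Everything genuinely new has already been established in the previous sections — the two combinatorial recursions (Lemmas~\ref{lem:opl_inv} and \ref{lem: recursion opr_dinv}), the crucial identity \eqref{crucial_thm}, the red and blue formulas of Theorem~\ref{thm:red_blue_formulas}, and the Carlitz-type bijections — so this theorem should amount to their assembly. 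Once it is proved, replacing $k$ by $n-k$ and applying Proposition~\ref{prop:xi_map_OPR_dinv} gives exactly the identity \eqref{eq:key}, which together with the equivalence of Section~5 settles both the $t=0$ and the $q=0$ cases of the generalized Delta conjecture.
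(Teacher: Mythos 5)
Your proposal is correct and mirrors the paper's own proof: induction on $n$ via the $h_j^\perp$-determinacy criterion, with $j=n$ settled by Lemma~\ref{lem:j=n} together with Proposition~\ref{prop:xi_map_OPR_dinv}, and $1\le j<n$ settled by substituting Theorem~\ref{thm: recursion opl_inv} and the inductive hypothesis into Lemma~\ref{lem: recursion opr_dinv} and comparing with Theorem~\ref{thm:red_blue_formulas}. (One tiny calibration: after the $\rho=m+s-r$ reindexing the term-for-term match is actually with the blue formula \eqref{blue_formula} rather than the red one, since the $q^s$ in the first sum of Lemma~\ref{lem: recursion opr_dinv} pairs with the highlighted $q^s$ there, while the second sum carries no extra $q^{k-s}$ --- but as you correctly observe, red and blue are equal, so this changes nothing.)
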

\begin{proof}
	The proof is similar to the proof of Theorem~\ref{thm: recursion opl_inv}.
	
	We proceed by induction on $n$. The base case $n=0$ (or $n=1$) is straightforward to check.
	
	Let $n \geq 1$. We are going to use that if two symmetric functions $f,g\in \Lambda^{(n)}$ with $n>0$ are such that $h_j^\perp f = h_j^\perp g$ for all $1\leq j \leq n$, then it is not hard to see that we must have $f=g$ (cf. \cite{Haglund-Rhoades-Shimozono-Advances}*{Lemma~3.6}).
	
	The case $j=n$ is dealt with in the proof of Lemma~\ref{lem:j=n}. 
	
	For $1\leq j<n$, consider the formula in Lemma~\ref{lem: recursion opr_dinv}.
	
	Since we know that $\OPi^L_{\underline{x},q}(m+s-r,n-j)^{k-s} = q^{m+s-r} \cdot { }^qC_{n-j,k-s}^{(m+s-r)}$ (same for the term with the extra $+1$) from Theorem~\ref{thm: recursion opl_inv}, and that ${ }^qC_{n-j,k-s}^{(m+s-r)} = \OPd^R_{\underline{x},q}(m+s-r,n-j)^{k-s}$ (same for the term with the extra $+1$) by induction on the degree, we can replace it with \[\ \sum_{r=1}^{m+s} q^{m+s-r} \left(\OPd^R_{\underline{x},q}(m+s-r,n-j)^{k-s} + \OPd^R_{\underline{x},q}(m+s-r,n-j)^{k-s+1}\right). \] 
	
	Comparing the resulting formula with the red formula \eqref{red_formula}, and using the induction on $n$, we conclude that $h_j^\perp \OPd^R_{\underline{x},q}(m,n)^{k} = h_j^\perp { }^qC_{n,k}^{(m)}$.
	
	Therefore we proved that $h_j^\perp \OPd^R_{\underline{x},q}(m,n)^{k} = h_j^\perp { }^qC_{n,k}^{(m)}$ for $1\leq j\leq n$, which implies $\OPd^R_{\underline{x},q}(m,n)^{k} = { }^qC_{n,k}^{(m)}$, completing the proof of the theorem.
\end{proof}
The following theorem is the main result of the present article.
\begin{theorem}
	For $m,n,k\in \mathbb{N}$, $m\geq 0$ and $n>k\geq 0$, we have both
	\[
	\left.\Delta_{h_{m}} \Delta'_{e_{n-k-1}} e_{n}\right|_{t=0} = \PLD_{\underline{x},q,0}(m,n)^{\ast k}
	\]
	and
	\[ \left.\Delta_{h_{m}} \Delta'_{e_{n-k-1}} e_{n}\right|_{q=0} = \PLD_{\underline{x},0,t}(m,n)^{\ast k}.\]
\end{theorem}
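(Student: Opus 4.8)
The plan is to harvest the results assembled in the preceding sections; no genuinely new computation is required. By the equivalence theorem of Section~5 (the ``if and only if'' statement around \eqref{eq:key}), the two displayed identities are equivalent to one another, so it suffices to prove the $t=0$ one, namely
\[
\left.\Delta_{h_{m}} \Delta'_{e_{n-k-1}} e_{n}\right|_{t=0} = \PLD_{\underline{x},q,0}(m,n)^{\ast k}.
\]

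First I would rewrite the left hand side in the ${}^qC$ notation of Section~3: since ${}^qC_{n,\ell}^{(m)} = \left.\Delta_{h_{m}}\Delta'_{e_{\ell-1}}e_n\right|_{t=0}$, choosing $\ell=n-k$ (legitimate because $n>k\ge 0$ forces $1\le n-k\le n$) gives $\left.\Delta_{h_{m}} \Delta'_{e_{n-k-1}} e_{n}\right|_{t=0} = {}^qC_{n,n-k}^{(m)}$. Next I apply Theorem~\ref{thm: recursion opr_dinv} with the index $n-k$ in place of $k$, which yields ${}^qC_{n,n-k}^{(m)} = \OPd^R_{\underline{x},q}(m,n)^{n-k}$. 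Finally, the Remark following Proposition~\ref{prop:xi_map_OPR_dinv} identifies $\OPd^R_{\underline{x},q}(m,n)^{n-k} = \left.\PLD_{\underline{x},q,t}(m,n)^{\ast\, n-(n-k)}\right|_{t=0} = \PLD_{\underline{x},q,0}(m,n)^{\ast k}$. Chaining these three equalities proves the $t=0$ identity, and the Section~5 equivalence then delivers the $q=0$ identity as well.

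The only step requiring any care is the index bookkeeping: the bijection $\xi$ of Proposition~\ref{prop:xi_map_OPR_dinv} runs between a subset of $\PLD(m,n)^{\ast\, n-k}$ and $\OP^R(m,n)^k$, so one must substitute $k\mapsto n-k$ uniformly in order for the decoration exponents to match the $\Delta'_{e_{n-k-1}}$ appearing in the statement. There is no real obstacle here: the substantive content of the paper sits upstream, in Theorem~\ref{thm: recursion opl_inv} and Theorem~\ref{thm: recursion opr_dinv} (where the combinatorial recursions of Lemma~\ref{lem:opl_inv} and Lemma~\ref{lem: recursion opr_dinv} are matched term by term against the blue and red formulae of Theorem~\ref{thm:red_blue_formulas}) together with the ordered-multiset-partition equivalence of Section~5; the present theorem is merely the final assembly of those pieces.
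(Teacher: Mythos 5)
Your proposal is correct and takes essentially the same route as the paper: the paper presents this theorem without a separate written proof precisely because, as it signals at the end of Section~5 and before Theorem~\ref{thm: recursion opr_dinv}, the statement follows by chaining the definition of ${}^qC_{n,n-k}^{(m)}$, Theorem~\ref{thm: recursion opr_dinv}, the remark identifying $\OPd^R_{\underline{x},q}(m,n)^{n-k}$ with $\PLD_{\underline{x},q,0}(m,n)^{\ast k}$, and the Section~5 equivalence. Your index bookkeeping ($k\mapsto n-k$, using $n>k\ge 0$ to ensure $1\le n-k\le n$) is exactly what is needed and is handled correctly.
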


Combining this theorem with the fact that the generalized Delta conjecture at $q=0$ is equivalent to the generalized Delta square conjecture of \cite{DAdderio-Iraci-VandenWyngaerd-Delta-Square}*{Conjecture~3.12} at $q=0$ (see the proof of \cite{DAdderio-Iraci-VandenWyngaerd-Delta-Square}*{Theorem~5.1} for an argument), we get the following corollary, which generalizes \cite{DAdderio-Iraci-VandenWyngaerd-Delta-Square}*{Theorem~5.1}.
\begin{theorem}
	The generalized Delta square conjecture \cite{DAdderio-Iraci-VandenWyngaerd-Delta-Square}*{Conjecture~3.12} at $q=0$ holds true.
\end{theorem}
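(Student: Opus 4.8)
The plan is to obtain this theorem as a direct corollary of the machinery assembled in Sections~4--7, without any further symmetric-function or combinatorial computation. First I would recall the equivalence established earlier, namely the theorem asserting that \eqref{eq:key} holds if and only if its $q=0$ counterpart does, so that it suffices to prove \eqref{eq:key} itself, i.e. $\left.\Delta_{h_{m}} \Delta'_{e_{n-k-1}} e_{n}\right|_{t=0} = \PLD_{\underline{x},q,0}(m,n)^{\ast k}$; the $q=0$ identity then follows for free.

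To prove \eqref{eq:key}, I would translate its right-hand side into the language of ordered multiset partitions. By Proposition~\ref{prop:xi_map_OPR_dinv}, as recorded in the remark defining $\OPd^R_{\underline{x},q}(m,n)^{k}$, one has $\PLD_{\underline{x},q,0}(m,n)^{\ast k} = \OPd^R_{\underline{x},q}(m,n)^{n-k}$, matching the superscripts via $n-(n-k)=k$. Then Theorem~\ref{thm: recursion opr_dinv} identifies $\OPd^R_{\underline{x},q}(m,n)^{n-k}$ with ${ }^qC_{n,n-k}^{(m)}$, and the definition ${ }^qC_{n,k}^{(m)} = \left.\Delta_{h_{m}}\Delta_{e_{k-1}}'e_n\right|_{t=0}$, with $k$ replaced by $n-k$, gives ${ }^qC_{n,n-k}^{(m)} = \left.\Delta_{h_{m}}\Delta_{e_{n-k-1}}'e_n\right|_{t=0}$. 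Chaining these equalities is precisely \eqref{eq:key}, which completes the proof.

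Since all the ingredients are in place, there is no real obstacle at this stage; the substantive work lies upstream. That work consists in proving that $\OPi^L_{\underline{x},q}(m,n)^{k}$ and $\OPd^R_{\underline{x},q}(m,n)^{k}$ obey the combinatorial recursions of Lemmas~\ref{lem:opl_inv} and~\ref{lem: recursion opr_dinv}, which mirror the blue and red formulas of Theorem~\ref{thm:red_blue_formulas} for $q^m\cdot{ }^qC_{n,k}^{(m)}$ and ${ }^qC_{n,k}^{(m)}$ respectively; in supplying the $j=n$ base case via Lemma~\ref{lem:j=n}; and in invoking Wilson's equidistribution bijections (Theorem~\ref{thm:Carlitz_bijections}) together with their restrictions (Lemma~\ref{lem:restrictions}) to switch between the $\inv$- and $\dinv$-enumerators inside the double induction on degree that proves Theorems~\ref{thm: recursion opl_inv} and~\ref{thm: recursion opr_dinv}. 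The one point worth double-checking in the present assembly is the bookkeeping of the decoration parameter under $\xi$: the path side of Proposition~\ref{prop:xi_map_OPR_dinv} carries $n-k$ decorated rises while the ordered-multiset-partition side carries $m+k$ blocks, so the substitution $k\mapsto n-k$ must be tracked consistently through the chain $\PLD \to \OPd^R \to { }^qC \to \Delta$.
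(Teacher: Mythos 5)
Your proposal never actually touches the generalized Delta \emph{square} conjecture; it instead re-derives the generalized Delta conjecture at $t=0$ and $q=0$ (i.e.\ equation~\eqref{eq:key} and its $q=0$ counterpart), which is exactly the content of the theorem immediately preceding this corollary in the paper. That work is not the issue here. The statement you are asked to prove concerns \cite{DAdderio-Iraci-VandenWyngaerd-Delta-Square}*{Conjecture~3.12}, a different conjecture about a different symmetric function, and nothing in your chain $\PLD \to \OPd^R \to {}^qC \to \Delta$ mentions it.

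The missing, and indeed essential, step is the observation that the generalized Delta square conjecture reduces to the generalized Delta conjecture when $q=0$. Concretely, one must invoke the equivalence
\[
\text{generalized Delta square conjecture at } q=0 \;\Longleftrightarrow\; \text{generalized Delta conjecture at } q=0,
\]
which the paper obtains by pointing to the argument in the proof of \cite{DAdderio-Iraci-VandenWyngaerd-Delta-Square}*{Theorem~5.1}. Without this bridge, establishing $\left.\Delta_{h_{m}} \Delta'_{e_{n-k-1}} e_{n}\right|_{q=0} = \PLD_{\underline{x},0,t}(m,n)^{\ast k}$ says nothing about the square conjecture. The paper's actual proof of the present corollary is a one-line combination: take the main theorem (the generalized Delta conjecture at $q=0$, which you have), and compose it with the aforementioned equivalence. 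Your proposal supplies the first ingredient but omits the second, so as written it does not prove the stated theorem.
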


\section{Open problems}

Combining Theorem~\ref{thm: recursion opl_inv} and Theorem~\ref{thm: recursion opr_dinv} with Theorem~\ref{thm:Carlitz_bijections} and Lemma~\ref{lem:restrictions}, we proved that 
\[q^m\cdot \OPd^R_{\underline{x},q}(m,n)^{k}= \OPd^L_{\underline{x},q}(m,n)^{k}.\]

It would be interesting to find a direct (bijective?) combinatorial proof of this identity. Notice that such a proof would reduce Theorem~\ref{thm: recursion opr_dinv} to Theorem~\ref{thm: recursion opl_inv}.

\medskip

Observe also that the generalized Delta square conjecture \cite{DAdderio-Iraci-VandenWyngaerd-Delta-Square}*{Conjecture~3.12} is not symmetric in $q$ and $t$, so our results leave open the case $t=0$.


\bibliographystyle{amsalpha}
\bibliography{Biblebib}

\end{document}